\newcommand{\pc}{\mathrm{Precone}}
\newcommand{\cone}{\mathrm{Cone}^{\omega}(G,\mathbf{s})}
\title{Locally compact lacunary hyperbolic groups}
\author{Adrien Le Boudec}
\address{Laboratoire de Mathématiques, bâtiment 425, Université Paris-Sud 11, 91405 Orsay, France}
\email{adrien.le-boudec@math.u-psud.fr}
\subjclass[2010]{20F65, 20F67, 20F69, 22D05, 20E08.}
\theoremstyle{plain}
\newtheorem{thm}{Theorem}[section]
\newtheorem{prop}[thm]{Proposition}
\newtheorem{cor}[thm]{Corollary}
\newtheorem{lem}[thm]{Lemma}
\newtheorem*{thm-intro}{Theorem}
\theoremstyle{definition}
\newtheorem{ex}[thm]{Example}
\newtheorem{rmq}[thm]{Remark}
\begin{document}

\maketitle

\begin{abstract}
We investigate the class of locally compact lacunary hyperbolic groups. We prove that if a locally compact compactly generated group $G$ admits one asymptotic cone that is a real tree and whose natural transitive isometric action is focal, then $G$ must be a focal hyperbolic group. As an application, we characterize connected Lie groups and linear algebraic groups over an ultrametric local field of characteristic zero having cut-points in one asymptotic cone. 

We prove several results for locally compact lacunary hyperbolic groups, and extend the characterization of finitely generated lacunary hyperbolic groups to the setting of locally compact groups. We moreover answer a question of Olshanskii, Osin and Sapir about subgroups of lacunary hyperbolic groups.
\end{abstract}

\setcounter{tocdepth}{1}

\tableofcontents

\section{Introduction}

\subsection{Locally compact hyperbolic groups}

%It is now commonly accepted that beyond the case of finitely generated groups, one can profitably study locally compact compactly generated groups by viewing them as metric spaces, and studying their large scale geometry.
If $G$ is a locally compact group and $S$ a compact generating subset, then $G$ can be equipped with the word metric associated to $S$. %Like in the discrete setting, different compact generating subsets give rise to bi-Lipschitz equivalent word metrics, so that the quasi-isometry class of the group $G$ is well defined.
A locally compact compactly generated group is hyperbolic if it admits some compact generating subset such that the associated word metric is Gromov-hyperbolic. By \cite[Corollary 2.6]{CCMT}, this is equivalent to asking that the group acts continuously, properly and cocompactly by isometries on some proper geodesic hyperbolic metric space. Examples of non-discrete hyperbolic groups include semisimple real Lie groups of rank one, or the full automorphism group of a semi-regular locally finite tree. We freely use the shorthand \textit{hyperbolic LC-group} for locally compact compactly generated hyperbolic group.

Finitely generated hyperbolic groups have received much attention over the last twenty-five years, and their study led to a rich and powerful theory. On the other hand, hyperbolic LC-groups have not been studied to the same extent, and this disparity leads to the natural problem of discussing the similarities and differences between the discrete and non-discrete setting. One positive result in this vein is the extension of Bowditch's topological characterization of discrete hyperbolic groups, as those finitely generated groups that act properly and cocompactly on the space of distinct triples of a compact metrizable space, to the setting of locally compact groups \cite{Car-Dree}. However it turns out that some hyperbolic LC-groups exhibit some completely opposite behavior to what happens for discrete hyperbolic groups: while a non-virtually cyclic finitely generated hyperbolic group always contains a non-abelian free group, some hyperbolic LC-groups are non-elementary hyperbolic and amenable. It follows from the work of Caprace, Cornulier, Monod and Tessera that those can be characterized in terms of the dynamics of the action of the group on its boundary, and that they coincide with the class of mapping tori of compacting automorphisms (see Theorem \ref{thm-ccmt-intro}).

\subsection{Lacunary hyperbolic groups}

The definition of asymptotic cones of a metric space makes sense for a locally compact compactly generated group $G$. Let $\mathbf{s} = (s_n)$ be a sequence of positive real numbers tending to infinity, and $\omega$ a non-principal ultrafilter. We denote by $\pc(G,\mathbf{s})$ the set of sequences $(g_n)$ in $G$ such that there exists some constant $C > 0$ so that the word length of $g_n$ is at most $C s_n$ for every $n \geq 1$; and equip it with the pseudo-metric $d_{\omega}((g_n),(h_n)) = \mathrm{lim}^{\omega} \,d_S(g_n,h_n)/s_n$. It inherits a group structure by component-wise multiplication, and the asymptotic cone $\cone$ of $G$ associated to the parameters $\mathbf{s}, \omega$ is the homogeneous space $\mathrm{Precone}(G,\mathbf{s}) \, / \, \mathrm{Sublin}^{\omega}(G,\mathbf{s})$, where $\mathrm{Sublin}^{\omega}(G,\mathbf{s})$ is the subgroup of sequences at distance $d_{\omega}$ zero from the identity. The group $\pc(G,\mathbf{s})$ can be viewed as a large picture of the group $G$, and the action of $\pc(G,\mathbf{s})$ on $\cone$ is inherited from the action of $G$ on itself. Asymptotic cones capture the large-scale geometry of the word metric on $G$. In some sense, the metric space $\cone$ reflects the properties of the group $G$ that are visible at scale $\mathbf{s}$. 

For example if $G$ is a hyperbolic LC-group, then all its asymptotic cones are real trees. Interestingly enough, thanks to a result of Gromov \cite{Gro-asy, Drutucone}, one can characterize hyperbolicity in terms of asymptotic cones: a locally compact compactly generated group is hyperbolic if and only if all its asymptotic cones are real trees. However there exist finitely generated non-hyperbolic groups with some asymptotic cone a real tree. The first example appeared in \cite{TV}, where small cancellation theory is used to construct a finitely generated group with one asymptotic cone a real tree, and one asymptotic cone that is not simply connected. The systematic study of the class of finitely generated groups with one asymptotic cone a real tree, called \textit{lacunary hyperbolic groups}, was then initiated in \cite{OOS}. Olshanskii, Osin and Sapir characterized finitely generated lacunary hyperbolic groups as direct limits of sequences of finitely generated hyperbolic groups satisfying some conditions on the hyperbolicity constants and injectivity radii \cite[Theorem 3.3]{OOS}. They also proved that the class of finitely generated lacunary hyperbolic groups contains examples of groups that are very far from being hyperbolic: a non-virtually cyclic lacunary hyperbolic group can have all its proper subgroups cyclic, can have an infinite center or can be elementary amenable.

Following \cite{OOS}, we call a locally compact compactly generated group \textit{lacunary hyperbolic} if one of its asymptotic cones is a real tree. For example if $X$ is a proper geodesic metric space with a cobounded isometric group action, and if $X$ has one asymptotic cone that is a real tree, then the full isometry group $G = \mathrm{Isom}(X)$ is a locally compact lacunary hyperbolic group, which has a priori no reason to be discrete.

By construction any asymptotic cone $\cone$ of a locally compact compactly generated group $G$ comes equipped with a natural isometric action of the group $\pc(G,\mathbf{s})$. So in particular if $G$ admits one asymptotic cone $\cone$ that is a real tree, then we have a transitive action by isometries of the group $\pc(G,\mathbf{s})$ on a real tree.  Recall that isometric group actions on real trees are classified as follows: if the the translation length is trivial then there is a fixed point or a fixed end, and otherwise either there is an invariant line, a unique fixed end or two hyperbolic isometries without common endpoint. It turns out that when $G$ is a hyperbolic LC-group, then for every choice of parameters $\mathbf{s}$ and $\omega$,  the asymptotic cone $\cone$ is a real tree, and the type of the action of $\pc(G,\mathbf{s})$ on $\cone$ is inherited from the type of the $G$-action on itself. Recall that a hyperbolic LC-group $G$ is called focal if its action on $\partial G$ has a unique fixed point. In particular when $G$ is a focal hyperbolic group, then for every scaling sequence $\mathbf{s}$ and non-principal ultrafilter $\omega$, the asymptotic cone $\cone$ is a real tree and the action of $\pc(G,\mathbf{s})$ on $\cone$ fixes a unique boundary point. This naturally leads to the question as to whether this phenomenon may appear when considering non-hyperbolic groups. Our first result shows that this is not the case. More precisely, we prove the following statement (see Theorem \ref{thm-focal}).

\begin{thm} \label{thm-intro-focal}
Let $G$ be a locally compact compactly generated group. Assume that $G$ admits \textnormal{one} asymptotic cone $\cone$ that is a real tree and such that the group $\pc(G,\mathbf{s})$ fixes a unique end of $\cone$. Then \mbox{$G = H \rtimes \mathbb{Z}$} or $H \rtimes \mathbb{R}$, where the element $1 \in \mathbb{Z}$ or $\mathbb{R}$ induces a compacting automorphism of $H$.
\end{thm}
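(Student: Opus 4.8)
The plan is to show that $G$ is a \emph{focal} hyperbolic LC-group; the asserted decomposition $G = H \rtimes \mathbb{Z}$ or $H \rtimes \mathbb{R}$ with $1$ acting by a compacting automorphism of $H$ is then exactly the classification of focal hyperbolic LC-groups of Caprace--Cornulier--Monod--Tessera (see \cite{CCMT} and the discussion around Theorem \ref{thm-ccmt-intro}). So the task reduces to proving that the word metric $d_S$ is Gromov-hyperbolic and that the induced $G$-action on $\partial G$ has a unique fixed point.

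First I would read off a contracting direction from the focal action. Since $\pc(G,\mathbf{s})$ acts transitively on the real tree $\cone$ and fixes its unique end $\xi$, the Busemann cocycle at $\xi$ is a homomorphism $\beta \colon \pc(G,\mathbf{s}) \to \mathbb{R}$, and $\beta \neq 0$: otherwise every element would be elliptic and the orbit of the base point $o$ (the class of the constant sequence $(e)$) would sit inside a single horosphere, contradicting transitivity. Choose a hyperbolic element $\gamma = (g_n) \in \pc(G,\mathbf{s})$ with $\beta(\gamma) > 0$; after conjugating $\gamma$ (every element of $\pc(G,\mathbf{s})$ fixes $\xi$, so its axis may be moved onto $o$) we may assume $o$ lies on its axis, whence $\mathrm{lim}^{\omega}\, d_S(e,g_n^{k})/s_n = |k|\,\beta(\gamma)$ for all $k \in \mathbb{Z}$ and $\gamma^{n} o \to \xi$. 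Each $h \in \ker\beta$ is elliptic with $\beta(h)=0$, hence fixes pointwise a ray toward $\xi$; so for $n$ large (depending on $h$) the point $\gamma^{n} o$ lies on that ray, i.e.\ $\gamma^{-n} h \gamma^{n}$ fixes $o$ and thus $\gamma^{-n} h \gamma^{n} \in \mathrm{Sublin}^{\omega}(G,\mathbf{s})$. Unravelling the definitions, this says: for every $\varepsilon > 0$ and every family $(h_m)$ in $G$ with $d_S(e,h_m) = o(s_m)$, one has $d_S\!\big(e,\, g_m^{-n} h_m g_m^{n}\big) \le \varepsilon\, s_m$ for all large $n$ and all $m$ in a set belonging to $\omega$; in other words, conjugation by the powers of $(g_n)$ absorbs every $\mathbf{s}$-sublinear family into the identity.

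The heart of the proof is to upgrade this scale-$\mathbf{s}$ absorption to a scale-independent statement. I would use $\gamma$ to build, inside $G$ itself, a compact ``vacuum'' set together with a contracting element (resp.\ one-parameter subgroup), and deduce that $d_S$ is hyperbolic; equivalently, one shows that \emph{every} asymptotic cone of $G$ is a real tree and invokes Gromov's characterisation of hyperbolic LC-groups recalled above, the contracting direction supplied by $\gamma$ being precisely what forbids the ``lacunary'' oscillation of the geometry between scales. Focality of the $\partial G$-action is then straightforward, since a fixed end of an asymptotic cone forces a fixed point on the boundary of a hyperbolic group. Once $G$ is known to be focal hyperbolic, the alternative $\mathbb{Z}$ versus $\mathbb{R}$ is dictated by whether the Busemann (modular-type) homomorphism of $G$ to $\mathbb{R}$ has discrete or dense image: discrete yields $G = H \rtimes \mathbb{Z}$, dense yields a continuous surjection $G \twoheadrightarrow \mathbb{R}$ and $G = H \rtimes \mathbb{R}$.

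The main obstacle is this last upgrade. Possessing a single, isolated scale at which $G$ resembles a focal tree is a priori far weaker than hyperbolicity, and the argument must genuinely exploit both that the cone is a \emph{tree}---this is what turns the conjugation estimate above into an exact absorption rather than a mere coarse contraction---and that the action is \emph{focal}---the resulting self-similarity is what lets one transport the absorption to every scale. Making this precise requires quantifying the contraction, e.g.\ controlling the slimness of geodesic triangles in $d_S$-balls of radius $R s_n$ uniformly in $R$, and then appealing to a local-to-global principle for hyperbolicity; in the locally compact framework one must additionally accommodate large compact subgroups and, in the $\mathbb{R}$-case, the continuous homomorphism onto $\mathbb{R}$, which is where the LC-refinements of these tools (as in \cite{CCMT}) are needed.
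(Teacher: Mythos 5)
Your overall strategy---first prove that $G$ is a focal hyperbolic LC-group, then quote the Caprace--Cornulier--Monod--Tessera classification to obtain the semidirect product decomposition with compacting automorphism---is not the route the paper takes, and more importantly it leaves the genuinely hard step unproved. You correctly identify that step yourself (``the main obstacle is this last upgrade''): passing from the datum of \emph{one} asymptotic cone being a focal tree to a statement valid at all scales (hyperbolicity of $d_S$, or equivalently all cones being trees). At that point the proposal only records intentions (``I would use $\gamma$ to build\ldots a compact vacuum set\ldots and deduce that $d_S$ is hyperbolic''); no mechanism is given for transporting the scale-$\mathbf{s}$ absorption to other scales, and a local-to-global principle such as the Cartan--Hadamard theorem is not available here because a lacunary hyperbolic group need not be compactly presented a priori. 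This is a gap, not a detail: a single isolated scale at which the geometry is tree-like is exactly the lacunary phenomenon, and nothing in your sketch rules out oscillation at intermediate scales before the group structure has been pinned down.

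There is a second, related problem: your Busemann cocycle $\beta$ is a homomorphism on $\pc(G,\mathbf{s})$, not on $G$. Every element of $G$, viewed as a constant sequence, lies in $\mathrm{Sublin}^{\omega}(G,\mathbf{s})$ and hence acts trivially on the cone, so $\beta$ carries no information about individual group elements and cannot by itself produce the quotient $G \twoheadrightarrow \mathbb{Z}$ or $\mathbb{R}$. The paper circumvents both difficulties by a different scheme: (1) it relates the cone geometry to the \emph{modular function} $\Delta_G$ (a genuine continuous homomorphism on $G$) via a covering/volume estimate along the fixed ray (Lemma \ref{lem-focal-unim}), which yields non-unimodularity and then the topological splitting $G = H \rtimes \mathbb{Z}$ or $H \rtimes \mathbb{R}$ with $H = \ker\Delta_G$ (Corollary \ref{cor-H-by-Z}); (2) it extracts from the horocyclic action of $\pc_G(H,\mathbf{s})$ on the cone a contraction at the single scale $N = s_n$, namely $t^{N}\cdot B_{G,H}(2N)\cdot t^{-N} \subset B_{G,H}(N)$ (Proposition \ref{prop-contrac-cone}), and then upgrades this \emph{one-scale} inclusion to a compacting automorphism by a purely algebraic bootstrap: the compact set $K_1 = \bigcup_{i=0}^{N_1-1} t^{i} B_{G,H}(N_1) t^{-i}$ satisfies $tK_1t^{-1}\subset K_1$, whence a compact $K$ with $tK^2t^{-1}\subset K$ generating $H\rtimes\langle t\rangle$ together with $t$, and Proposition \ref{prop-comp-hyp} concludes. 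No hyperbolicity of $G$ and no appeal to \cite{CCMT} is needed (indeed the paper presents the theorem as an independent strengthening of the implication $(i)\Rightarrow(iii)$ there). If you want to salvage your plan, the missing content is precisely an analogue of this bootstrap; as written, the proposal does not constitute a proof.
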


Recall that an automorphism $\alpha \in \mathrm{Aut}(H)$ of a locally compact group $H$ is said to be compacting if there exists a compact subset $V \subset H$ such that for every $h \in H$, for $n$ large enough $\alpha^n(h) \in V$. In particular we recover and strengthen the implication $(i) \Rightarrow (iii)$ of the following theorem of Caprace, Cornulier, Monod and Tessera.

\begin{thm} \cite[Theorem 7.3]{CCMT} \label{thm-ccmt-intro}
If $G$ is a locally compact compactly generated group, then the following statements are equivalent:
\begin{enumerate}[label=(\roman*)]
\item $G$ is a focal hyperbolic group;
\item $G$ is amenable and non-elementary hyperbolic;
\item $G$ is a semidirect product $H \rtimes \mathbb{Z}$ or $H \rtimes \mathbb{R}$, where the element $1 \in \mathbb{Z}$ or $\mathbb{R}$ induces a compacting automorphism of the non-compact group $H$.
\end{enumerate}
\end{thm}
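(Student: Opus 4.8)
The plan is to prove the cycle of implications $(i)\Rightarrow(iii)\Rightarrow(ii)\Rightarrow(i)$. The first implication is essentially free given Theorem~\ref{thm-intro-focal}: if $G$ is a \emph{focal} hyperbolic group then, being hyperbolic, every asymptotic cone $\cone$ is a real tree (Gromov's criterion, as recalled above), and because $G$ fixes a unique point of $\partial G$ the induced action of $\pc(G,\mathbf{s})$ on $\cone$ fixes a unique end --- this is precisely the property noted in the introduction for focal groups. Theorem~\ref{thm-intro-focal} then applies verbatim and yields a decomposition $G = H\rtimes\mathbb{Z}$ or $H\rtimes\mathbb{R}$ in which $1$ induces a compacting automorphism of $H$; that $H$ is non-compact follows from the fact that a focal group has infinite boundary, so that the horospherical kernel cannot be compact. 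This gives $(iii)$.

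For $(iii)\Rightarrow(ii)$ I would first produce a proper geodesic hyperbolic space on which $G$ acts, so as to invoke \cite[Corollary 2.6]{CCMT}. Write $G = H\rtimes A$ with $A\in\{\mathbb{Z},\mathbb{R}\}$ and $\alpha$ the compacting automorphism. Equipping $H$ with a left-invariant proper metric $d_H$ coming from a compact generating set, I would build the warped space $X$ fibred over $A$, whose fibre over $t\in A$ is a copy of $(H,d_H)$ with the metric contracted according to the automorphism induced by $t$ (a combinatorial-horoball type complex when $A=\mathbb{Z}$); the compacting hypothesis guarantees that moving in the positive $A$-direction contracts the fibre metric geometrically up to a bounded (compact, ``uniscalar'') error, which is the standard mechanism making such a warped space Gromov-hyperbolic. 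The group $G$ acts on $X$ properly (properness of $d_H$) and cocompactly ($A$ translates the fibres, $H$ acts on each fibre), fixing the single end $\xi$ at $t\to+\infty$ toward which the horospheres shrink; thus the action is focal and $\partial G$ is infinite, so $G$ is non-elementary. Amenability is where the algebraic structure enters: a compacting automorphism forces $H$ to be compact-by-(contraction group), and by the structure theory of contraction groups in the connected (Siebert) and totally disconnected (Gl\"ockner--Willis) cases such groups are nilpotent, hence amenable; therefore $G = H\rtimes A$ is amenable, which gives $(ii)$.

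For $(ii)\Rightarrow(i)$ I would use the classification of cocompact isometric actions on proper geodesic hyperbolic spaces into the types elliptic, horocyclic, lineal, focal and general type, recalling that \emph{non-elementary} precisely excludes the first three (bounded orbits, a unique fixed end with no loxodromic, an invariant pair), leaving focal or general type. The amenability of $G$ rules out the general type: two loxodromic isometries with disjoint pairs of endpoints generate, after passing to suitable powers, a discrete free subgroup by a ping-pong argument on $\partial G$, contradicting amenability. Hence the action is focal, i.e. $G$ fixes a unique point of $\partial G$, which is $(i)$ and closes the cycle.

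The main obstacle is the geometric step inside $(iii)\Rightarrow(ii)$: constructing the hyperbolic model $X$ and verifying simultaneously hyperbolicity, properness, cocompactness and the \emph{uniqueness} of the fixed end. The delicate point is that a compacting automorphism need not be contracting --- it only contracts a cocompact subgroup modulo a compact ``uniscalar'' part --- so one must check that this bounded error does not destroy the negative-curvature estimate, and that no second boundary point is fixed. A secondary difficulty is the amenability input: reducing to the structure theory of contraction groups, and handling the split between the connected and totally disconnected parts of $H$, is what genuinely forces $H$, and hence $G$, to be amenable, and is the reason focality alone does not obviously yield $(ii)$ without first passing through $(iii)$.
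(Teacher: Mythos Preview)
This theorem is not proved in the present paper: it is quoted from \cite[Theorem~7.3]{CCMT}, and the paper only claims to \emph{recover and strengthen} the single implication $(i)\Rightarrow(iii)$ as a corollary of Theorem~\ref{thm-intro-focal}. For that implication your argument is exactly the paper's route: a focal hyperbolic group has all its asymptotic cones real trees, and the focal type of the $G$-action on itself forces the $\pc(G,\mathbf{s})$-action on $\cone$ to be focal (this is Lemma~\ref{lem-sametype-cone}); Theorem~\ref{thm-intro-focal} then gives the semidirect decomposition with a compacting automorphism, and $H$ is non-compact since a compact-by-$\mathbb{Z}$ or compact-by-$\mathbb{R}$ group would be two-ended rather than focal.

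Your sketches of $(iii)\Rightarrow(ii)$ and $(ii)\Rightarrow(i)$ go beyond what this paper does; they follow the strategy of \cite{CCMT} and are reasonable as outlines. The implication $(ii)\Rightarrow(i)$ via ping-pong on $\partial G$ is standard and correct. For $(iii)\Rightarrow(ii)$ you correctly locate the two genuine difficulties --- building a proper hyperbolic model when the automorphism is only compacting rather than contracting, and establishing amenability of $H$. One correction on the latter: your claim that contraction groups are nilpotent is too strong in the totally disconnected case (the torsion factor in the Gl\"ockner--Willis decomposition need not be nilpotent; e.g.\ a restricted product of copies of a finite non-abelian simple group under the shift is a non-nilpotent contraction group). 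Amenability nonetheless holds, since a group with a compacting automorphism is an increasing union of compact subsets stabilised into compact subgroups, so your conclusion survives even though the stated reason does not.
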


Our method is different from that of \cite{CCMT}: indeed the latter makes a crucial use of amenability, and the fact that quasi-characters on amenable groups are characters, while we only use geometric arguments at the level of the real tree arising as an asymptotic cone.

We point out that this strengthening of the result of \cite{CCMT} is definitely not the main application of Theorem \ref{thm-intro-focal} for our purpose, and that Theorem \ref{thm-intro-focal} is a crucial step in the proofs of both Theorem \ref{thm-intro-structure-lac-hyp} and Theorem \ref{thm-intro-cutpnt} below. 

We call a locally compact compactly generated group $G$ \textit{lacunary hyperbolic of general type} if it admits one asymptotic cone $\cone$ that is a real tree and such that the action of $\pc(G,\mathbf{s})$ has two hyperbolic isometries without common endpoint. Drutu and Sapir proved that any non-virtually cyclic finitely generated lacunary hyperbolic group is of general type (see the end of the proof of Theorem 6.12 in \cite{DS}). In the locally compact setting, it will follow from Theorem \ref{thm-intro-focal} that any lacunary hyperbolic group that is neither an elementary nor a focal hyperbolic group, is lacunary hyperbolic of general type (see Theorem \ref{thm-typegen}). 

%Recall that in any topological group $G$, the unit component $G^\circ$ is a closed normal subgroup such that the quotient group $G/G^\circ$ is totally disconnected.
It is often the case in topological group theory that a given problem can be reduced to the case of connected groups and totally disconnected groups, by using the fact that any topological group decomposes as an extension with connected kernel and totally disconnected quotient. For instance if one wants to study the large scale geometry of a given class of compactly generated groups (say that is stable by modding out by a compact normal subgroup and passing to a cocompact normal subgroup), then this can be reduced to the study of connected and totally disconnected groups as soon as the identity component of a group in this class is either compact or cocompact. It is worth pointing out that this process cannot be applied in generality for hyperbolic LC-groups, because it may happen that the unit component of a hyperbolic LC-group is neither compact nor cocompact. A typical example is $(\mathbb{Q}_p \times \mathbb{R}) \rtimes \mathbb{Z}$, where the automorphism of $\mathbb{Q}_p \times \mathbb{R}$ is the multiplication by $(p,p^{-1})$. However, apart from focal groups, it is true that the identity component of a hyperbolic LC-group is either compact or cocompact \cite[Proposition 5.10]{CCMT}. Here we will extend this result to the setting of lacunary hyperbolic groups in Theorem \ref{thm-composante-typegen}.

As a consequence, we will be able to deduce that a locally compact lacunary hyperbolic group is either hyperbolic or admits a compact open subgroup. Compactly generated groups with compact open subgroups are generally more tractable than compactly generated locally compact groups. For example they act geometrically on a locally finite connected graph thanks to a construction due to Abels recalled in Proposition \ref{Cay-Ab}. Most importantly for our purpose, the fact that any finitely generated group is a quotient of a finitely generated free group, admits a topological extension to the class of compactly generated groups with compact open subgroups (see Proposition \ref{prop-free}). This will allow us to extend the characterization of finitely generated lacunary hyperbolic groups of Olshanskii, Osin and Sapir to the locally compact setting (see Proposition \ref{prop-hyp-co} and Theorem \ref{thm-struct-lac}).

\begin{thm} \label{thm-intro-structure-lac-hyp}
Let $G$ be a compactly generated locally compact group. Then $G$ is lacunary hyperbolic if and only if \begin{enumerate}[label=(\alph*)]
\item either $G$ is hyperbolic; or 
\item there exists a hyperbolic LC-group $G_0$ acting geometrically on a locally finite tree, and an increasing sequence of discrete normal subgroups $N_n$ of $G_0$, whose discrete union $N$ is such that $G$ is isomorphic to $G_0 / N$; and if $S$ is a compact generating set of $G_0$ and \[ \rho_n = \min \{\left|g\right|_S \, : \, g \in N_{n+1} \! \setminus \! N_n \},\] then $G_0 / N_n$ is $\delta_n$-hyperbolic with $\delta_n =o(\rho_n)$.
\end{enumerate}
\end{thm}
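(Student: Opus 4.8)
The plan is to prove the two implications separately. In both directions the key is the reformulation of the real-tree property of $\cone$ as an approximate thinness condition on the geodesic triangles of $G$ of size $O(s_n)$, valid for $\omega$-almost every $n$; in the reverse direction one first reduces to the class of compactly generated groups with a compact open subgroup, where the Cayley--Abels construction (Proposition \ref{Cay-Ab}) and the topological free object (Proposition \ref{prop-free}) are at hand, and the remaining argument follows the scheme of \cite[Theorem 3.3]{OOS}.

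The implication from (a) to lacunary hyperbolicity is immediate, since all asymptotic cones of a hyperbolic LC-group are real trees. For the implication from (b), I would choose a scaling sequence adapted to the tower: passing to a subsequence of indices and using that $\delta_n = o(\rho_n)$ while $\rho_n \to \infty$ (which holds because $N$ is discrete), one obtains $\mathbf{s} = (s_n)$ with $\delta_n = o(s_n)$ and $s_n$ small enough compared to $\rho_n$ that, for any non-principal ultrafilter $\omega$, for every $C > 0$ and $\omega$-almost every $n$ the ball of radius $Cs_n$ in a Cayley--Abels graph of $G$ is isometric to the corresponding ball of $G_0/N_n$; as the latter is $\delta_n$-hyperbolic with $\delta_n/s_n \to 0$, every ball about the base point of $\cone$ is an ultralimit of $(\delta_n/s_n)$-hyperbolic spaces, hence $0$-hyperbolic, and by homogeneity and completeness $\cone$ is a real tree.

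For the converse, suppose $G$ is lacunary hyperbolic. If $G$ is hyperbolic we are in case (a); otherwise, by Theorem \ref{thm-composante-typegen} and the discussion following it, $G$ admits a compact open subgroup, so by Proposition \ref{prop-free} we may write $G = G_0 / N$ with $G_0$ a hyperbolic LC-group acting geometrically on a locally finite tree and $N$ a discrete, hence countable, normal subgroup; fix a compact generating set $S$ of $G_0$. Choosing $\mathbf{s}$ and $\omega$ so that $\cone$ is a real tree and applying the above reformulation together with a diagonal extraction against $\omega$, I obtain a subsequence $(\tau_n)$ of $\mathbf{s}$ and a sequence $\eta_n \to 0$ such that every geodesic triangle of perimeter at most $\tau_n$ in $G$ is $\eta_n \tau_n$-thin. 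Set $t_n := \sqrt{\eta_n}\,\tau_n$, arranging that $t_n \to \infty$, and let $N_n$ be the normal closure in $G_0$ of the finite set $\{g \in N : |g|_S \leq t_n\}$. Then $(N_n)$ is an increasing sequence of discrete normal subgroups of $G_0$ with union $N$; being the normal closure of a finite set, $G_0/N_n$ is compactly presented with defining relators of length $O(t_n)$; every element of $N \setminus N_n$ has $S$-length greater than $t_n$, so geodesic triangles of perimeter at most $t_n$ in $G_0/N_n$ map isometrically onto triangles of $G$ and are therefore $\eta_n \tau_n = \sqrt{\eta_n}\,t_n = o(t_n)$-thin; and $\rho_n = \min\{|g|_S : g \in N_{n+1} \setminus N_n\} > t_n$.

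It then remains to upgrade this scale-$t_n$ thinness to \emph{global} $\delta_n$-hyperbolicity of $G_0/N_n$ with $\delta_n = o(t_n)$, hence $\delta_n = o(\rho_n)$, which gives (b). This is where the local-to-global principle for hyperbolicity is used: for a compactly presented group, if the geodesic triangles of perimeter comparable to the length of the defining relators are $\mu$-thin with $\mu$ small compared to that length, then the group is $O(\mu)$-hyperbolic. I expect this step --- the passage from approximate thinness of bounded triangles to genuine hyperbolicity with a constant controlled uniformly enough along $(N_n)$ to preserve $\delta_n = o(\rho_n)$ --- to be the main obstacle, just as in \cite{OOS}; note that taking $N_n$ to be the normal closure of a finite set, so that $G_0/N_n$ is compactly presented, is precisely what makes the criterion applicable. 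The remaining points are routine: checking via Proposition \ref{prop-free} that $N$, and hence each $N_n$, can be taken discrete, and verifying that the successive subsequence extractions stay compatible with the fixed ultrafilter $\omega$.
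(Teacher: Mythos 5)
Your proposal follows essentially the same route as the paper: reduce to the compact-open-subgroup case via Proposition \ref{prop-hyp-co}, present $G$ as $G_0/N$ using Proposition \ref{prop-free}, take $N_n$ to be the normal closure of the finite set of elements of $N$ of bounded word length, and conclude with a local-to-global principle giving $\delta_n = o(\rho_n)$. The one step you flag as the ``main obstacle'' is handled in the paper exactly as you anticipate, by combining Proposition \ref{prop-lssc-BH} (which upgrades local $\delta_k$-hyperbolicity of balls containing the relator loops to large-scale simple connectedness at scale $O(\delta_k)$, rather than at the scale of the relators) with the Cartan--Hadamard Theorem \ref{CarHad} applied at an intermediate scale.
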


\subsection{Subgroups of lacunary hyperbolic groups}

In \cite{OOS}, the authors initiated the study of subgroups of finitely generated lacunary hyperbolic groups. They proved for example that any finitely presented subgroup of a lacunary hyperbolic group is a subgroup of a hyperbolic group, or that a subgroup of bounded torsion of a lacunary hyperbolic group cannot have relative exponential growth. This prohibits Baumslag-Solitar groups, free Burnside groups with sufficiently large exponent or lamplighter groups from occurring as subgroups of a finitely generated lacunary hyperbolic group \cite[Corollary 3.21]{OOS}. These groups are examples of groups satisfying a law, and the authors ask whether it is possible that a non-virtually cyclic finitely generated group of relative exponential growth in a finitely generated lacunary hyperbolic group satisfies a law.

Let $G$ be a compactly generated group and $\mathbf{s}$ a scaling sequence. For every subgroup $H \leq G$, the set of $H$-valued sequences of $\pc(G,\mathbf{s})$ is a subgroup of $\pc(G,\mathbf{s})$, which will be denoted $\pc_G(H,\mathbf{s})$. In particular when $\cone$ is a real tree, we have an isometric action of the group $\pc_G(H,\mathbf{s})$ on the real tree $\cone$, and one might wonder what is the type of this action in terms of the subgroup $H$. In Section \ref{sec-subgroups} we carry out a careful study of the possible type of the action of $\pc_G(H,\mathbf{s})$ on $\cone$, which leads to the following result.

\begin{thm} \label{thm-law-exp-intro}
Let $G$ be a unimodular lacunary hyperbolic group. If $H \leq G$ is a compactly generated subgroup of relative exponential growth in $G$ not having $\mathbb{Z}$ as a discrete cocompact subgroup, then $H$ cannot satisfy a law.
\end{thm}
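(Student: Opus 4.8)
The plan is to study the isometric action of $P_H := \pc_G(H,\mathbf{s})$ on a real tree $T$ arising as an asymptotic cone of $G$, and to play two properties of this action against each other: satisfying a law is inherited by $P_H$ and therefore prevents non-abelian free subgroups, while relative exponential growth of $H$ forces the action of $P_H$ to be ``large''.

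First I would fix parameters $\mathbf{s},\omega$ with $T := \cone$ a real tree, and assume for contradiction that $H$ satisfies a non-trivial law $w(x_1,\dots,x_k)=1$. Evaluating $w$ coordinate-wise on $H$-valued sequences shows that $P_H$, hence the quotient of $P_H$ acting isometrically on $T$, satisfies $w$ as well; in particular this quotient has no non-abelian free subgroup. Next I would extract the geometric content of relative exponential growth. If $P_H$ fixed a point $q$ of $T$, represented by a sequence $(g_n)$ with $|g_n|_S\le C s_n$, then every sequence $(h_n)$ in $H$ with $|h_n|_S\le Cs_n$ would satisfy $|g_n^{-1}h_n g_n|_S = o(s_n)$ along $\omega$; this would force $H\cap B_S(Cs_n)\subseteq g_n B_S(o(s_n))g_n^{-1}$ for $\omega$-many $n$, a set whose cardinality is sub-exponential in $s_n$, contradicting the exponential lower bound on $\#(H\cap B_S(Cs_n))$. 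A pigeonhole count moreover produces, for $\omega$-many $n$, exponentially many elements of $H$ at $S$-length $\asymp s_n$ that are pairwise $\ge \varepsilon s_n$ apart for a fixed $\varepsilon>0$, so the subtree of $T$ spanned by the $P_H$-orbit of the basepoint contains uncountably many $\varepsilon$-separated points in a bounded ball; in particular it is not a line, which together with the previous point rules out an invariant line. By the classification of isometric actions on real trees recalled in the introduction, the action of $P_H$ on $T$ is therefore either of general type or fixes a unique end.

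If the action were of general type, two hyperbolic isometries without common endpoint would, after replacement by high powers, generate a non-abelian free group by ping-pong along their axes, contradicting the first observation. Hence $P_H$ fixes a unique end $\xi$, giving the associated Busemann homomorphism $c\colon P_H\to\mathbb{R}$ (a genuine homomorphism, since $\xi$ is fixed and $T$ is a tree), whose kernel $N$ consists of the elliptically acting elements and is an increasing union $N=\bigcup_k N_k$ of stabilisers of points marching out towards $\xi$. Since $c(P_H)\le\mathbb{R}$ is visible only through sequences in $H$ escaping to infinity, it accounts for no more than linear growth in that direction, so the exponential growth must be horospherical and already concentrated in a single $N_k$, which therefore fixes a point while acting with an orbit that is still ``thick''. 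Iterating this dichotomy --- each stage is again of general type (excluded) or end-fixing --- and feeding in unimodularity of $G$, which rules out the compacting-automorphism structure with non-compact horospherical part that a cobounded end-fixing action of this kind produces, one is forced to conclude that $H$ is compact-by-$\mathbb{Z}$; that is, $H$ contains $\mathbb{Z}$ as a discrete cocompact subgroup, contrary to hypothesis. This contradiction shows $H$ cannot satisfy a law.

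I expect the last paragraph to be the main obstacle. Transferring the law to $P_H$, excluding a global fixed point and an invariant line by counting, and dispatching the general type case are geometric and robust; but extracting from ``relative exponential growth plus a unique fixed end'' the algebraic conclusion that $H$ is compact-by-$\mathbb{Z}$ is delicate. One cannot shortcut this through amenability, since satisfying a law does not imply amenability (e.g.\ Tarski monster groups), so the Busemann homomorphism and the nested stabilisers $N_k$ have to be controlled by hand; and it is precisely here that the unimodularity of $G$ and the exceptional role of $\mathbb{Z}$-by-compact subgroups come into play.
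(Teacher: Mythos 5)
Your overall frame --- transfer the law to $\pc_G(H,\mathbf{s})$, classify its action on the real tree, kill the general-type case by ping-pong --- matches the paper, and that last step is fine. But the heart of the theorem is the exclusion of the end-fixing cases (horocyclic and focal), and there your argument is not a proof. The paper handles these by a Haar-measure covering argument: if $\pc_G(H,\mathbf{s})$ fixes the end represented by a ray $\xi=(\xi_n(\cdot))^{\omega}$, then every $h_n\in B_{G,H}(s_n)$ moves a point of $\xi$ by $o(s_n)$ (horocyclic case, Proposition \ref{lem-exp-growth-horo}), or moves $\xi(1)$ to within $s_n/k$ of one of $k+1$ marked points subdividing $[\xi(1),\xi(2)]$ (focal case, Proposition \ref{prop-exp-growth-focal}); hence $B_{G,H}(s_n)$ is covered by boundedly many two-sided translates $x\cdot B_G(\varepsilon s_n)\cdot y$, and \emph{unimodularity} gives $\mu(x\cdot B_G(\varepsilon s_n)\cdot y)=\mu(B_G(\varepsilon s_n))\leq \alpha^{\varepsilon s_n}$, contradicting $\mu(B_{G,H}(s_n))\geq \rho^{s_n}$. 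Your Busemann-homomorphism paragraph (``the exponential growth must be horospherical and already concentrated in a single $N_k$'', ``iterating this dichotomy'', ``rules out the compacting-automorphism structure'') establishes none of this: the claimed conclusion that $H$ is compact-by-$\mathbb{Z}$ is never derived, and the one place where unimodularity must enter --- the invariance of Haar measure under two-sided translation in the covering estimate --- is absent. The paper's explicit example of a non-unimodular hyperbolic LC-group containing a finite-exponent subgroup of relative exponential growth acting horocyclically shows that this case cannot be dispatched by soft structural considerations; the measure computation is unavoidable.

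Two smaller points. Your exclusion of a global fixed point by counting ``cardinality'' only makes sense for discrete $G$; in the locally compact setting you again need Haar measure and unimodularity to bound $\mu\left(g_n\cdot B_G(o(s_n))\cdot g_n^{-1}\right)$. And your exclusion of the lineal case is not valid as written: a lineal action can have an orbit far from the invariant line (the basepoint need not lie on or near the common axis), so exhibiting many $\varepsilon$-separated orbit points in a bounded region does not contradict lineality. The paper instead invokes the hypothesis that $H$ has no discrete cocompact copy of $\mathbb{Z}$, via Lemma \ref{lem-cone-lineal} applied to $(H,d_G)$; this is where that hypothesis does its work, and it is genuinely needed, since a distorted infinite cyclic subgroup can have relative exponential growth.
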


We point out that the unimodularity assumption is essential in Theorem \ref{thm-law-exp-intro}, even for lacunary hyperbolic groups of general type. When specified to the setting of discrete groups, Theorem \ref{thm-law-exp-intro} answers a question of Olshanskii, Osin and Sapir \cite[Question 7.2]{OOS}. This prohibits for example finitely generated solvable groups from appearing as subgroups of finitely generated lacunary hyperbolic groups (see Corollary \ref{cor-solv-lac}).

\subsection{Asymptotic cut-points}

Recall that a point $x \in X$ in a geodesic metric space is a cut-point if $X \setminus \{x \}$ is not connected. Finitely generated groups with cut-points in some asymptotic cone have been studied among others by Drutu-Osin-Sapir \cite{DS}, Drutu-Mozes-Sapir \cite{DMS} or Behrstock \cite{Beh}. The property of having cut-points in some asymptotic cone can be seen as a very weak form of hyperbolicity. In \cite{DS}, Drutu and Sapir proved that if a finitely generated non-virtually cyclic group $G$ satisfies a law, then $G$ does not have cut-points in any of its asymptotic cones. This result does not extend immediately to locally compact groups, as for instance the real affine group $\mathbb{R} \rtimes \mathbb{R}$ is non-elementary hyperbolic and solvable of class two. Nevertheless Theorem \ref{thm-intro-focal} will allow us to generalize the result of Drutu and Sapir to the locally compact setting, by proving that a locally compact group satisfying a law does not have cut-points in any of its asymptotic cones as soon as it is neither an elementary nor a focal hyperbolic group (see Theorem \ref{thm-law-cutpt}).

In cite \cite{DMS}, Drutu-Mozes and Sapir studied the existence of cut-points in asymptotic cones for lattices in higher rank semisimple Lie groups. They conjectured that such a lattice does not have cut-points in any of its asymptotic cones, and proved the conjecture in some classical examples. Here we consider the same problem not for lattices, but for connected Lie groups themselves. More precisely, we prove the following rigidity result for connected Lie groups and linear algebraic groups over the $p$-adics (see Corollary \ref{cor-connected-cut}).

\begin{thm} \label{thm-intro-cutpnt}
Let $G$ be either a connected Lie group or a compactly generated linear algebraic group over an ultrametric local field of characteristic zero. If $G$ has cut-points in one of its asymptotic cones, then $G$ is a hyperbolic group.
\end{thm}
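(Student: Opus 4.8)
The plan is to reduce the statement to the results of this paper on lacunary hyperbolic groups, the two key points being: (i) that the mere existence of a cut-point in one asymptotic cone already forces such a $G$ to be lacunary hyperbolic; and (ii) that a \emph{compactly presented} lacunary hyperbolic group is hyperbolic. Both connected Lie groups and compactly generated linear algebraic groups over an ultrametric local field of characteristic zero are compactly presented, so (ii) closes the argument once (i) is established. All the properties involved (having a cut-point in some asymptotic cone, being lacunary hyperbolic, being hyperbolic, being compactly presented) are invariant under passing to $G/W$ for a compact normal subgroup $W$ and under passing to cocompact closed subgroups, since these operations are quasi-isometries and cut-points are a bi-Lipschitz invariant.

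First I would perform the standard structural reductions: replace $G$ by a quotient by a maximal compact normal subgroup and by a cocompact closed subgroup, so as to put $G$ in a convenient normal form. For a connected Lie group this yields $G = R \rtimes L$ with $R$ the radical and $L$ semisimple without compact factors; for an algebraic group over an ultrametric local field one argues similarly using the Levi decomposition and the structure of the reductive quotient together with its parabolic subgroups. It then suffices to treat this normal form.

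The heart of the proof, and the main obstacle, is step (i): from a cut-point in one asymptotic cone $X = \cone$ one must produce an asymptotic cone of $G$ that is a real tree. Here I would use that, by Drutu and Sapir, $X$ is tree-graded with respect to its maximal pieces, and that $\pc(G,\mathbf{s})$ acts transitively on $X$ and permutes the pieces; hence $X$ is homogeneous and any two distinct pieces meet in at most one point. One then has to show, using the structure theory of Lie (resp. algebraic) groups, that the pieces must be reduced to points. Concretely: a semisimple part $L$ of rank $\geq 2$ would make every asymptotic cone of $G$ contain flats organised as in a Euclidean building of rank $\geq 2$, which, by homogeneity of $X$, is incompatible with the existence of even a single cut-point; and a radical $R$ that is not essentially a Heintze (contracting) group forces the presence of quasi-flats or of a product structure, again incompatible with a homogeneous cut-point. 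What survives is the elementary case, the rank-one semisimple case, and the focal solvable case; in the latter a real-tree cone on which the $\pc(G,\mathbf{s})$-action fixes a unique end appears, and Theorem \ref{thm-intro-focal} identifies $G$ as a mapping torus of a compacting automorphism. In every surviving case $G$ is lacunary hyperbolic.

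Finally, knowing that $G$ is lacunary hyperbolic and compactly presented, I would invoke Theorem \ref{thm-intro-structure-lac-hyp}. Alternative (b) would present $G$ as a quotient $G_0/N$ with $N$ a strictly increasing union of discrete normal subgroups $N_n$ and $\delta_n = o(\rho_n)$; such a group cannot be compactly presented, since the defining relations contributed by elements of $N$ of word length comparable to $\rho_n \to \infty$ are not consequences of relations of bounded length. Hence alternative (a) holds and $G$ is hyperbolic. In the connected Lie case one can also argue directly: in alternative (b) the group $G_0$ acts geometrically on a locally finite tree, hence has a compact open subgroup, hence so does its quotient $G = G_0/N$ by the closed discrete subgroup $N$; but a connected Lie group with a compact open subgroup is compact, and therefore hyperbolic. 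This completes the proof modulo the structural analysis of step (i), where essentially all of the work lies.
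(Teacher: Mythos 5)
Your overall architecture (cut-point $\Rightarrow$ lacunary hyperbolic $\Rightarrow$ hyperbolic via compact presentability) is the one the paper records as an alternative proof after Corollary \ref{cor-connected-cut}, and your endgame is sound: compactly presented lacunary hyperbolic groups are hyperbolic (Corollary \ref{cor-cp-lac}), and your direct argument in the connected case (a connected group with a compact open subgroup is compact) is correct. The problem is step (i), where you yourself locate ``essentially all of the work'': it is left as an unproved programme. The assertions that a rank $\geq 2$ semisimple part, or a non-Heintze radical, produces flats or quasi-flats ``incompatible with a homogeneous cut-point'' are each substantial statements requiring proof (and the interaction between a nontrivial radical and the semisimple part makes the case analysis genuinely delicate); as written, step (i) is a gap, not a proof.

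The paper closes this gap with a much softer observation that makes the entire structural case analysis unnecessary. After modding out the compact radical, a connected Lie group has a cocompact (possibly non-connected) solvable subgroup, so $G$ is quasi-isometric to a compactly generated solvable group (Corollary \ref{cor-qi-law}); the same holds for compactly generated linear algebraic groups over an ultrametric local field of characteristic zero. A solvable group satisfies a law, hence so does its precone, which therefore contains no non-abelian free subgroup. Now if one asymptotic cone $\mathcal{C}$ has a cut-point, it is tree-graded (Lemma 2.31 of \cite{DS}), and Proposition 6.9 of \cite{DS} applied to the transitive action of $\pc(G,\mathbf{s})$ on $\mathcal{C}$ forces $\pc(G,\mathbf{s})$ to contain a free subgroup unless $\mathcal{C}$ is a real tree; this is Proposition \ref{prop-DS-reform}. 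So $G$ is lacunary hyperbolic with no structure theory beyond the existence of a cocompact solvable subgroup, and one concludes either by your compact-presentability route or, as in the paper's main line, by Proposition \ref{prop-lac-law-hyp} (a lacunary hyperbolic group satisfying a law, or quasi-isometric to one, is hyperbolic --- using Theorem \ref{thm-focal} to handle the focal case and the free-subgroup obstruction to rule out general type). I would also note that your claim that alternative (b) of Theorem \ref{thm-intro-structure-lac-hyp} is incompatible with compact presentability is stated without proof; the paper instead derives Corollary \ref{cor-cp-lac} from the Cartan--Hadamard theorem (Theorem \ref{CarHad}), which is the clean way to make that step rigorous.
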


\subsection{Structure of the paper}
In Section \ref{sec-preli} we recall the definition of asymptotic cones and some general facts about group actions on hyperbolic metric spaces and real trees, and give some background on locally compact groups. 

In Section \ref{sec-preli-cones} we establish some preliminary results about the dynamics of group actions on asymptotic cones.

Section \ref{sec-focal} contains the proof of Theorem \ref{thm-intro-focal}, which essentially consists in two steps. The first one is to prove that any group satisfying the hypotheses of Theorem \ref{thm-intro-focal} is a topological semidirect product $H \rtimes \mathbb{Z}$ or $H \rtimes \mathbb{R}$, and this will be achieved by considering the modular function and using geometric arguments at the level of the asymptotic cone that is a real tree. The second step in the proof is to show that the associated action is compacting, and this is again deduced from the focal dynamics at the level of the asymptotic cone. 

The end of Section \ref{sec-focal} is devoted to the application of Theorem \ref{thm-intro-focal} to the study of locally compact groups with asymptotic cut-points, and in particular it contains the proof of Theorem \ref{thm-intro-cutpnt}. 

In Section \ref{sec-gen-type} we derive from Theorem \ref{thm-intro-focal} that any locally compact lacunary hyperbolic group that is not a hyperbolic LC-group must be lacunary hyperbolic of general type (see Theorem \ref{thm-typegen}). This will allow us to obtain that any locally compact lacunary hyperbolic group is either a hyperbolic LC-group, or has compact open subgroups. This result is an essential step towards the proof of Theorem \ref{thm-intro-structure-lac-hyp}, which will be given at the end of Section \ref{sec-gen-type}.

Finally Section \ref{sec-subgroups} is devoted to the study of the structure of subgroups of lacunary hyperbolic groups. We point out that while the main concerns of the previous sections were non-discrete groups, all the results of Section \ref{sec-subgroups} encompass the case of discrete groups and are new even in this setting. In the first part of Section \ref{sec-subgroups} we obtain the interesting result that any quasi-isometrically embedded normal subgroup of a lacunary hyperbolic group is either compact or cocompact (see Proposition \ref{prop-normal-compact-cocompact}). The second part of Section \ref{sec-subgroups} contains the proof of Theorem \ref{thm-law-exp-intro}.

\subsection*{Acknowledgments}
Most of the problems discussed in this paper arose from discussions with Yves Cornulier. I am very grateful to him for his interest in this work, his many useful suggestions and his careful reading of the paper. I also thank Romain Tessera for his interest and useful discussions. 

\section{Preliminaries} \label{sec-preli}

%We begin by presenting definitions and background material.

\subsection{Asymptotic cones}
We start this section by recalling the definition of asymptotic cones. Let $\omega$ be a non-principal ultrafilter, i.e.\ a finitely additive probability measure on $\mathbb{N}$ taking values in $\left\{0,1\right\}$ and vanishing on singletons. A statement $\mathrm{P}(n)$ is said to hold \mbox{$\omega$-almost} surely if the set of integers $n$ such that $\mathrm{P}(n)$ holds has measure $1$. For any bounded function $f: \mathbb{N} \rightarrow \mathbb{R}$, there exists a unique real number $\ell$ such that for every $\varepsilon > 0$, we have $f(n) \in \left[\ell - \varepsilon, \ell + \varepsilon\right]$ \mbox{$\omega$-almost} surely. The number $\ell$ is called the limit of $f$ along $\omega$, and we denote $\ell = \mathrm{lim}^{\omega} f(n)$.

%Recall also that given a sequence $(a_n)$ in a topological space $A$, we say that $a \in A$ is an $\omega$-limit of $(a_n)$ if for every neighbourhood $U$ of $a$, $a_n \in U$ \mbox{$\omega$-almost} surely. $\omega$-limits always exist if $A$ is compact and are unique provided that $A$ is Hausdorff.

Consider a non-empty metric space $(X,d)$, a base point $e \in X$, and a scaling sequence $\mathbf{s} = (s_n)$, i.e.\ a sequence of positive real numbers tending to infinity. A sequence $(x_n)$ of elements of $X$ is said to be $\mathbf{s}$-linear if there exists a constant $C > 0$ so that $d(x_n,e) \leq C s_n$ for all $n \geq 1$. We denote by $\pc(X,d,\mathbf{s})$ the set of $\mathbf{s}$-linear sequences. If $\omega$ is a non-principal ultrafilter, the formula $d_{\omega}(x,y) = \mathrm{lim}^{\omega} \,d(x_n,y_n)/s_n$ makes $\pc(X,d,\mathbf{s})$ a pseudometric space, i.e.\ $d_{\omega}$ satisfies the triangle inequality, is symmetric and vanishes on the diagonal. The asymptotic cone $\mathrm{Cone}^{\omega}(X,d,\mathbf{s})$ of $(X,d)$ relative to the scaling sequence $\mathbf{s}$ and the non-principal ultrafilter $\omega$, is defined by identifying elements of $\mathrm{Precone}(X,d,\mathbf{s})$ at distance $d_{\omega}$ zero. More precisely, $\mathrm{Cone}^{\omega}(X,d,\mathbf{s})$ is the set of equivalence classes of $\mathbf{s}$-linear sequences, where $x,y \in \pc(X,d,\mathbf{s})$ are equivalent if $d_{\omega}(x,y) = 0$. We will denote by $(x_n)^{\omega}$ the class of the $\mathbf{s}$-linear sequence $(x_n)$.

If two metric spaces $X,Y$ are quasi-isometric, then their asymptotic cones corresponding to the same parameters $\mathbf{s}$ and $\omega$ are bi-Lipschitz homeomorphic.

%If $(X,d)$ is a homogeneous metric space, then its asymptotic cones do not depend on the choice of the observation points and will be denoted $\mathrm{Cone}^{\omega}(X,d,\mathbf{s})$. 

Now if $G$ is a locally compact compactly generated group, it can be viewed as a metric space when endowed with the word metric $d_S$ associated to some compact generating subset $S$. Since word metrics associated to different compact generating sets are bi-Lipschitz equivalent, $\pc(G,d_S,\mathbf{s})$ does not depend on the choice of $S$ and will be denoted by $\pc(G,\mathbf{s})$. It inherits a group structure by component-wise multiplication. For any non-principal ultrafilter $\omega$, the set of $\mathbf{s}$-linear sequences that are at distance $d_{\omega}$ zero from the constant sequence $(e)$ is a subgroup of $\pc(G,\mathbf{s})$, denoted by $\mathrm{Sublin}^{\omega}(G,\mathbf{s})$. The asymptotic cone $\mathrm{Cone}^{\omega}(G,d_S,\mathbf{s})$ is by definition the space of left cosets \[ \mathrm{Cone}^{\omega}(G,d_S,\mathbf{s}) = \mathrm{Precone}(G,\mathbf{s}) \, / \, \mathrm{Sublin}^{\omega}(G,\mathbf{s}),\] endowed with the metric $d_{\omega}((g_n)^{\omega},(h_n)^{\omega}) = \mathrm{lim}^{\omega} \,d_S(g_n,h_n)/s_n$. By construction the group $\pc(G,\mathbf{s})$ acts transitively by isometries on $\mathrm{Cone}^{\omega}(G,d_S,\mathbf{s})$. Note that as a set, $\mathrm{Cone}^{\omega}(G,d_S,\mathbf{s})$ does not depend on $S$. Moreover if $S_1$, $S_2$ are two compact generating sets, then the identity map is a bi-Lipschitz homeomorphism between $\mathrm{Cone}^{\omega}(G,d_{S_1},\mathbf{s})$ and $\mathrm{Cone}^{\omega}(G,d_{S_2},\mathbf{s})$. We will denote by $\cone$ the corresponding class of metric spaces up to bi-Lipschitz homeomorphism.

If $H$ is a subgroup of a locally compact compactly generated group $G$, then for every scaling sequence $\mathbf{s}$, we will denote by $\pc_G(H,\mathbf{s})$ the subgroup of $\pc(G,\mathbf{s})$ consisting of $H$-valued sequences. Remark that if $H$ is a normal subgroup of $G$ then $\pc_G(H,\mathbf{s})$ is normal in $\pc(G,\mathbf{s})$, and if $H$ satisfies a law then $\pc_G(H,\mathbf{s})$ satisfies the same law. These two simple observations will be used repeatedly throughout the paper.

\subsection{Isometric actions on hyperbolic spaces and real trees}

\subsubsection{Isometric actions on hyperbolic metric spaces and hyperbolic groups}

Let $X$ be a geodesic $\delta$-hyperbolic metric space, and $x \in X$ a base-point. Recall that it means that $X$ is a geodesic metric space such that any side of any geodesic triangle is contained in the $\delta$-neighbourhood of the union of the two other sides. We define the Gromov product relative to $x$ by the formula $2 (y,z)_x =d(y,x) + d(z,x) - d(y,z)$. A sequence $(y_n)$ of points in $X$ is called Cauchy-Gromov if \mbox{$(y_n,y_m)_x \rightarrow \infty$} as \mbox{$m,n \rightarrow \infty$}. The relation on the set of Cauchy-Gromov sequences defined by \mbox{$(y_n) \sim (z_n)$} if $(y_n,z_n)_x \rightarrow \infty$ as $n \rightarrow \infty$, is an equivalence relation, and the boundary $\partial X$ of the hyperbolic metric space $X$ is by definition the set of equivalence classes of Cauchy-Gromov sequences. 

Recall that if $\varphi$ is an isometry of $X$, then the quantity $d(\varphi^n x,x)/n$ always converges to some real number $l(\varphi) \geq 0$ as \mbox{$n \rightarrow \infty$}. When $l(\varphi) = 0$, the isometry $\varphi$ is called \textit{elliptic} if it has bounded orbits, and \textit{parabolic} otherwise. When $l(\varphi) > 0$, the isometry $\varphi$ is called \textit{hyperbolic}. The limit set of $\varphi$, also called the set of endpoints of $\varphi$, is the subset of $\partial X$ of Cauchy-Gromov sequences defined along an orbit of $\varphi$. It is empty if $\varphi$ is elliptic, a singleton if $\varphi$ is parabolic and has cardinality two if $\varphi$ is hyperbolic.

Now let $\Gamma$ be a group acting by isometries on $X$. Gromov's classification \cite{GroHyp}, which is summarized in Figure \ref{types}, says that exactly one of the following happens:
\begin{enumerate}
\item orbits are bounded, and the action of $\Gamma$ on $X$ is said to be \textit{bounded};
\item orbits are unbounded and $\Gamma$ does not contain any hyperbolic element, in which case the action is said to be \textit{horocyclic};
\item $\Gamma$ has a hyperbolic element and any two hyperbolic elements share the same endpoints. Such an action is termed \textit{lineal};
\item $\Gamma$ has a hyperbolic element, the action is not lineal and any two hyperbolic elements share an endpoint. In this situation we say that the action is \textit{focal};
\item there exist two hyperbolic elements not sharing any endpoint. Such an action is said to be of \textit{general type}.
\end{enumerate}

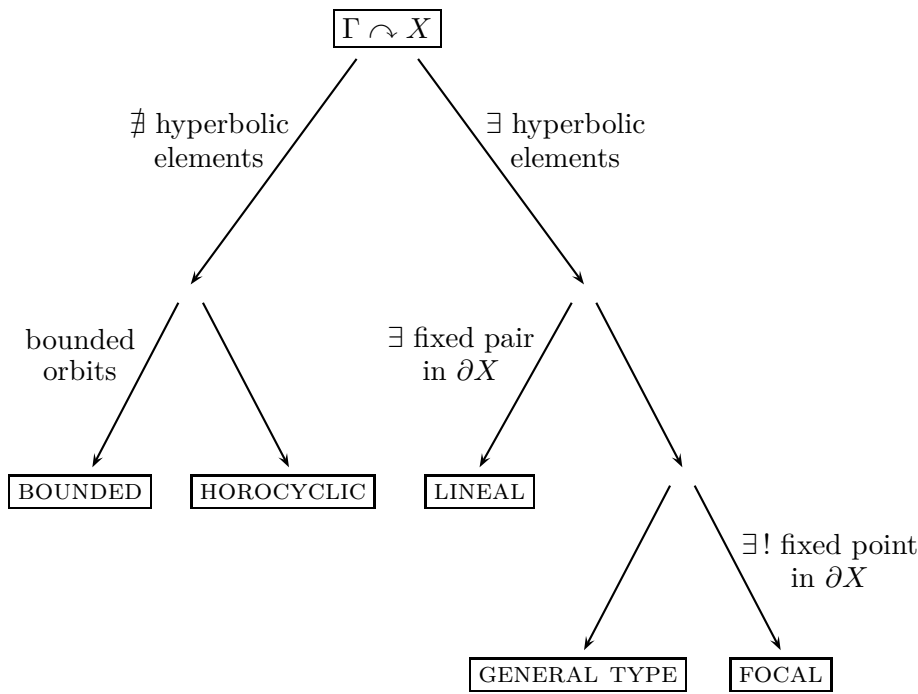
\begin{figure}
\psset{unit=23pt}
\begin{pspicture}(11,11.5)
\rput(5.5,11){\framebox{$\Gamma \curvearrowright X$}}
\psline[arrowsize=0.16]{->}(5,10.5)(2.3,6.8)
\rput(2.6,9.4){$\nexists$ hyperbolic}
\rput(2.6,8.9){elements} 
\psline[arrowsize=0.16]{->}(6,10.5)(8.7,6.8)
\rput(8.4,9.4){$\exists$ hyperbolic}
\rput(8.4,8.9){elements} 

\psline[arrowsize=0.16]{->}(2.1,6.5)(0.7,3.8)
\rput(0.5,5.9){bounded}
\rput(0.5,5.4){orbits}
\psline[arrowsize=0.16]{->}(2.5,6.5)(3.9,3.8)
\rput(0.5,3.4){\framebox{\textsc{bounded}}}
\rput(3.8,3.4){\framebox{\textsc{horocyclic}}}

\psline[arrowsize=0.16]{->}(8.5,6.5)(7,3.8)
\rput(6.7,5.9){$\exists$ fixed pair}
\rput(6.7,5.4){in $\partial X$}
\psline[arrowsize=0.16]{->}(8.9,6.5)(10.3,3.8)
\rput(7,3.4){\framebox{\textsc{lineal}}}

\psline[arrowsize=0.16]{->}(10.1,3.5)(8.7,0.8)
\psline[arrowsize=0.16]{->}(10.5,3.5)(11.9,0.8)
\rput(12.7,2.5){$\exists \, !$ fixed point}
\rput(12.7,2){in $\partial X$}
\rput(8.6,0.4){\framebox{\textsc{general type}}}
\rput(11.9,0.4){\framebox{\textsc{focal}}}
\end{pspicture}
\caption{\label{types}Types of actions on hyperbolic spaces}
\end{figure}

%Note that $\mathrm{Cay}(G,S)$ may very well be non-locally compact and the action of $G$ on $\mathrm{Cay}(G,S)$ non-continuous. Nevertheless, a locally compact group is hyperbolic if and only if it acts geometrically on a proper geodesic hyperbolic metric space \cite[Section 2]{CCMT}.

Now recall that a locally compact compactly generated group $G$ is called hyperbolic if its Cayley graph is hyperbolic for some (any) compact generating subset $S$. The type of $G$ is defined as the type of the action of $G$ on its Cayley graph. Since horocyclic isometric actions are always distorted (see for example Proposition 3.2 in \cite{CCMT}), hyperbolic LC-groups are never horocyclic. It is easily seen that a hyperbolic LC-group is bounded if and only if it is compact, and hyperbolic LC-groups that are lineal are exactly the locally compact compactly generated groups with two ends. These two types of hyperbolic LC-groups are usually gathered under the term of \textit{elementary} hyperbolic groups. 

When dealing with discrete groups, it is a classical result that a finitely generated non-elementary hyperbolic group is of general type. On the other hand, focal hyperbolic groups do exist in the realm of non-discrete locally compact groups. Examples include some connected Lie groups (e.g. $\mathbb{R}^{n-1} \rtimes \mathbb{R}$, $n \geq 2$, which admits a free and transitive isometric action on the $n$-dimensional hyperbolic space $\mathbb{H}^n$ fixing a boundary point), or the stabilizer of an end in the automorphism group of a semi-regular locally finite tree. Beyond the connected and totally disconnected cases, a simple example of a focal hyperbolic group is $(\mathbb{Q}_p \times \mathbb{R}) \rtimes \mathbb{Z}$, where the element $1 \in \mathbb{Z}$ acts by multiplication by $p$ on $\mathbb{Q}_p$ and by $p^{-1}$ on $\mathbb{R}$. %Observe for example that the real affine group $\mathbb{R} \rtimes \mathbb{R}$ has a free and transitive isometric action on the hyperbolic plane fixing a boundary point.
Caprace, Cornulier, Monod and Tessera characterized focal hyperbolic groups as those hyperbolic LC-groups that are non-elementary and amenable, and gave a precise description of the structure of these groups (see Theorem 7.3 in \cite{CCMT}).

\subsubsection{Actions on real trees}

We now recall some basic facts about real trees and isometric group actions on these. A metric space is a real tree if it is geodesic and $0$-hyperbolic, or equivalently if any two points are connected by a unique topological arc. If $T$ is a real tree, a non-empty subset $T'\subset T$ is called a subtree if it is connected, which is equivalent to saying that $T'$ is convex. We insist on the fact that by definition a subtree is necessarily non-empty. A point $x \in T$ is said to be a branching point if $T \setminus \{x\}$ has at least three connected components, and the branching cardinality of $x$ is the cardinality of the set of connected components of $T \setminus \{x\}$.

If $\varphi$ is an isometry of a real tree $T$, then the translation length of $\varphi$ is defined as \[ \left\| \varphi \right\| = \inf_{x \in T} d(\varphi x,x),\] and the characteristic set $\mathrm{Min}_{\varphi}$ of $\varphi$ is the set of points where the translation length is attained. The following proposition, a proof of which can be consulted in \cite{Cul-Mor}, shows that the dynamics of an individual isometry of a real tree is easily understood.

\begin{prop}
The characteristic set $\mathrm{Min}_{\varphi}$ is a closed subtree of $T$ which is invariant by $\varphi$. If $\left\|g\right\| = 0$ then $\varphi$ is elliptic and $\mathrm{Min}_{\varphi}$ is the set of fixed points of $\varphi$; and if $\left\| \varphi \right\| > 0$ then $\mathrm{Min}_{\varphi}$ is a line isometric to $\mathbb{R}$, called the axis of $\varphi$, along which $\varphi$ translates by $\left\| \varphi \right\|$.
\end{prop}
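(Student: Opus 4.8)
The plan is to run the classical Culler--Morgan analysis (see \cite{Cul-Mor}), organised around the displacement function $f(x) = d(x,\varphi x)$ and the overlap quantity $\beta(x) = d\bigl(\varphi x,[x,\varphi^2 x]\bigr)$, which measures how much $\varphi$ backtracks at $x$. Expanding the Gromov product gives the formula $\beta(x) = f(x) - \tfrac12 d(x,\varphi^2 x)$, and the same computation applied to $\varphi^{-1}$ yields $\beta(x) = d\bigl(x,[\varphi^{-1}x,\varphi x]\bigr)$ as well; in particular $\beta(x)=0$ means precisely that $\varphi x \in [x,\varphi^2 x]$, equivalently $x \in [\varphi^{-1}x,\varphi x]$, and I will call such an $x$ \emph{coaxial}. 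Also note $0 \le \beta(x) \le f(x)$ always, with $\beta(x) = f(x)$ iff $\varphi^2 x = x$.

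The purely structural assertions are quick. Invariance of $\mathrm{Min}_\varphi$ under $\varphi$ is immediate since $d(\varphi x,\varphi^2 x) = f(x)$; closedness holds because $f$ is $1$-Lipschitz; and convexity of $\mathrm{Min}_\varphi$ (hence its being a subtree, once non-empty) follows from the convexity of the displacement function along geodesics in a $\mathrm{CAT}(0)$ space --- a real tree being one --- so that a segment with both endpoints in $\mathrm{Min}_\varphi$ lies entirely in $\mathrm{Min}_\varphi$. The substance is the ``fixed point or axis'' dichotomy, including non-emptiness of $\mathrm{Min}_\varphi$.

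This rests on two lemmas. \emph{Axis lemma}: if some $x$ is coaxial with $f(x)>0$, then applying $\varphi^n$ to $\varphi x\in[x,\varphi^2x]$ shows the translates $[\varphi^nx,\varphi^{n+1}x]$ concatenate to a $\varphi$-invariant geodesic line $L$ on which $\varphi|_L$ is translation by $a:=f(x)$; since $\varphi$ preserves $L$ it commutes with nearest-point projection onto $L$, so for any $y$ with projection $z$ one gets $[y,\varphi y]=[y,z]\cup[z,\varphi z]\cup[\varphi z,\varphi y]$ with $d(z,\varphi z)=a$, whence $f(y)=2\,d(y,L)+a$; this gives $\|\varphi\|=a>0$ and $\mathrm{Min}_\varphi=L$. \emph{Descent lemma}: if $x$ is not coaxial and $\varphi^2x\ne x$, then $0<\beta(x)<f(x)$, and the tripod centre $b_x$ of $\{\varphi^{-1}x,x,\varphi x\}$ --- lying on $[x,\varphi x]$ at distance $\beta(x)$ from $x$, with image $\varphi b_x$ the centre of $\{x,\varphi x,\varphi^2x\}$, lying on $[x,\varphi x]$ at distance $\beta(x)$ from $\varphi x$ --- satisfies $f(b_x)=\lvert f(x)-2\beta(x)\rvert<f(x)$; and if $\varphi^2x=x$ the midpoint of $[x,\varphi x]$ is fixed by $\varphi$. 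Granting these: a fixed point of $\varphi$ is coaxial with $f=0$, so if $\varphi$ has no fixed point there is no coaxial point, and then from any $x_0$ the descent lemma iterates indefinitely (any $\varphi^2x_k=x_k$ would give a fixed point), producing $f(x_k)\searrow\inf f$ strictly; completeness of $T$ yields a minimiser $x_\infty$, which by the descent lemma applied to it must be coaxial --- contradiction. Hence $\varphi$ always has either a coaxial point with $f>0$ (the hyperbolic case, handled by the axis lemma) or a fixed point. In the latter case $\|\varphi\|=0$ and, since $f(x)=0\iff\varphi x=x$, we get $\mathrm{Min}_\varphi=\mathrm{Fix}(\varphi)\ne\emptyset$; and whenever $\|\varphi\|>0$, the inequality $f(b_x)=\lvert f(x)-2\beta(x)\rvert\ge\|\varphi\|$ forces every point of $\mathrm{Min}_\varphi$ to be coaxial, hence to lie on the axis $L$, giving $\mathrm{Min}_\varphi=L$.

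The main obstacle I anticipate is the descent step: verifying the positions of the two tripod centres on $[x,\varphi x]$ and disposing of the degenerate configurations ($\beta(x)=0$, $\beta(x)=f(x)$, $\varphi^2x=x$) is elementary but fiddly, and the extraction of an honest minimiser of $f$ genuinely uses completeness. For a completely general real tree one would instead have to argue directly that the nested family of convex sublevel sets $\{f\le c\}$, $c>\|\varphi\|$, has a common point; in the setting relevant to this paper this is harmless, since the real trees in question arise as asymptotic cones and are therefore complete.
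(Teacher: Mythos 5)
The paper itself offers no proof of this proposition --- it simply cites Culler--Morgan --- so the only question is whether your argument stands on its own. Most of it does: the structural claims, the axis lemma, and the tripod computations ($b_x\in[x,\varphi x]$ at distance $\beta(x)$ from $x$, $\varphi b_x\in[x,\varphi x]$ at distance $\beta(x)$ from $\varphi x$, hence $f(b_x)=\lvert f(x)-2\beta(x)\rvert$) are all correct. (Minor quibble: $f$ is $2$-Lipschitz, not $1$-Lipschitz, which changes nothing.)

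The genuine gap is in how you deploy the descent lemma to get the dichotomy. From ``no fixed point and no coaxial point'' you iterate $x_{k+1}=b_{x_k}$ and assert that $f(x_k)\searrow\inf f$ and that completeness yields a minimiser $x_\infty$. Neither assertion is justified: a strictly decreasing sequence produced by a descent scheme need not converge to the infimum of $f$ (the decrements $\lvert f(x_k)-f(x_{k+1})\rvert$ may shrink to zero with $f(x_k)$ stuck above $\inf f$), and completeness produces limits of \emph{Cauchy} sequences, not minimisers of continuous functions --- you never show $(x_k)$ is Cauchy, and indeed $d(x_k,x_{k+1})=\beta(x_k)$ need not be summable. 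The classical repair makes the dichotomy a \emph{one-step} statement, using exactly the two medians you already have in hand. Write $w=\varphi b_x$, so that $[x,\varphi x]\cap[\varphi x,\varphi^2 x]=[w,\varphi x]$ with $d(w,\varphi x)=\beta(x)$. If $\beta(x)\ge f(x)/2$, then the midpoint $m$ of $[x,\varphi x]$ lies on $[w,\varphi x]\subset[\varphi x,\varphi^2 x]$ at distance $f(x)/2$ from $\varphi x$, as does $\varphi m$ (the midpoint of $[\varphi x,\varphi^2 x]$); since a segment carries a unique point at a given distance from an endpoint, $\varphi m=m$ and $\varphi$ is elliptic. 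If $\beta(x)<f(x)/2$, then $b_x$ and $w=\varphi b_x$ sit on $[x,\varphi x]$ in the order $x,b_x,w,\varphi x$, and $[b_x,w]\cap[w,\varphi w]\subset[b_x,w]\cap[w,\varphi x]=\{w\}$, so $[b_x,\varphi b_x]\cup[\varphi b_x,\varphi^2 b_x]$ is a geodesic: $b_x$ is coaxial with $f(b_x)=f(x)-2\beta(x)>0$, and your axis lemma finishes. This needs no iteration, no completeness, and proves the proposition for arbitrary real trees (as stated in the paper), whereas your version would at best cover complete ones. The rest of your argument --- including the formula $f(y)=2d(y,L)+\|\varphi\|$ identifying $\mathrm{Min}_\varphi$ with the axis --- then goes through verbatim.
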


If $\Gamma$ is a group acting by isometries on a real tree $T$, an invariant subtree $T'$ is called minimal if it does not contain any proper invariant subtree. When this holds we also say that the action of $\Gamma$ on $T'$ is minimal, or that $\Gamma$ acts minimally on $T'$. Since a real tree is a hyperbolic metric space, the classification of isometric group actions on hyperbolic spaces recalled in the previous paragraph holds, and the five possible types of actions may occur for groups acting on real trees. However if the action of $\Gamma$ on $T$ is minimal, then this action cannot be bounded unless $T$ is reduced to a point, is never horocyclic, and is lineal if and only if $T$ is isometric to the real line.

The following lemma is standard, see Proposition 3.1 in \cite{Cul-Mor}.

\begin{lem}
Suppose that $\Gamma$ is a group acting on a real tree. If $\Gamma$ contains some hyperbolic element, then the union of the axes of the hyperbolic elements of $\Gamma$ is an invariant subtree contained in any other invariant subtree.
\end{lem}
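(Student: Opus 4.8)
The plan is to show that the set
\[T_\Gamma := \bigcup \{ \mathrm{Min}_\varphi : \varphi \in \Gamma \text{ hyperbolic}\}\]
is a non-empty $\Gamma$-invariant subtree and that it is contained in every $\Gamma$-invariant subtree. Non-emptiness and invariance are immediate: $\Gamma$ contains a hyperbolic element by hypothesis, and for $\gamma \in \Gamma$ and $\varphi$ hyperbolic, $\gamma\varphi\gamma^{-1}$ is hyperbolic with $\mathrm{Min}_{\gamma\varphi\gamma^{-1}} = \gamma\,\mathrm{Min}_\varphi$, so $\gamma(T_\Gamma) = T_\Gamma$.

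The heart of the argument is convexity of $T_\Gamma$. Since every point of $T_\Gamma$ lies on the axis of some hyperbolic element, it suffices to check that for any two hyperbolic $g, h \in \Gamma$, every geodesic from a point of $\mathrm{Min}_g$ to a point of $\mathrm{Min}_h$ lies in $T_\Gamma$. If $\mathrm{Min}_g \cap \mathrm{Min}_h \neq \emptyset$ this is clear, because in a real tree the union of two intersecting convex subsets is convex (split any connecting geodesic at the median with a common point). If the two axes are disjoint, let $[p,q]$ be the bridge between them, with $p \in \mathrm{Min}_g$ and $q \in \mathrm{Min}_h$; here I would invoke the standard fact (see Culler--Morgan) that, after possibly replacing $g$ and $h$ by their inverses, $gh$ is hyperbolic with $\|gh\| = \|g\| + \|h\| + 2\,d(\mathrm{Min}_g,\mathrm{Min}_h)$ and $\mathrm{Min}_{gh} \supseteq [p,q]$. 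As $gh \in \Gamma$, this puts $[p,q]$ in $T_\Gamma$, and then any geodesic from $x \in \mathrm{Min}_g$ to $y \in \mathrm{Min}_h$ decomposes as $[x,p] \cup [p,q] \cup [q,y] \subseteq T_\Gamma$. Note that it is precisely the group law on $\Gamma$ (not merely that $\Gamma$ is a set of isometries) that makes disjoint axes connectable inside $T_\Gamma$; verifying the bridge claim is the one genuinely computational point, a short ping-pong argument with closest-point projections, and I expect it to be the main obstacle, although it is classical.

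It remains to show $T_\Gamma \subseteq T'$ for every $\Gamma$-invariant subtree $T'$ (non-empty by convention). Fix $y \in T'$ and a hyperbolic $g \in \Gamma$, and let $p$ be the closest-point projection of $y$ onto the closed convex line $\mathrm{Min}_g$; by $g$-equivariance of the projection, $gp$ is the projection of $gy$, so the geodesic $[y,gy]$ passes through $p$ and $gp$, whence $[p,gp] \subseteq [y,gy] \subseteq T'$ by convexity of $T'$. Applying powers of $g$ and using invariance of $T'$, we get $[g^{-n}p, g^n p] \subseteq [g^{-n}y, g^n y] \subseteq T'$ for all $n \geq 0$; since $g$ acts on $\mathrm{Min}_g \cong \mathbb{R}$ by a non-trivial translation, the union of these segments is all of $\mathrm{Min}_g$, so $\mathrm{Min}_g \subseteq T'$ (no closedness of $T'$ is required, as $\mathrm{Min}_g$ is already exhausted by these segments). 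Letting $g$ range over all hyperbolic elements of $\Gamma$ gives $T_\Gamma \subseteq T'$. Combining the three steps, $T_\Gamma$ is a $\Gamma$-invariant subtree contained in every $\Gamma$-invariant subtree, which proves the lemma.
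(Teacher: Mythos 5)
Your proof is correct. The paper does not actually prove this lemma---it cites it as standard from Culler--Morgan (Proposition 3.1)---and your argument is essentially the classical one from that reference: equivariance of axes for invariance, the product formula $\|gh\|=\|g\|+\|h\|+2\,d(\mathrm{Min}_g,\mathrm{Min}_h)$ with $\mathrm{Min}_{gh}\supseteq[p,q]$ to connect disjoint axes through the bridge, and the projection argument $[p,gp]\subseteq[y,gy]\subseteq T'$ to force each axis into any invariant subtree. The one imported ingredient (the bridge claim for a product of hyperbolics with disjoint axes) is exactly the Culler--Morgan computation and is correctly invoked, so there is no gap.
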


A simple but useful consequence is the following result.

\begin{lem} \label{lem-norm-hyp}
Let $\Gamma$ be a group acting minimally on a real tree $T$, and let $\Lambda \triangleleft \Gamma$ be a normal subgroup containing some hyperbolic element. Then the action of $\Lambda$ on $T$ is minimal as well, and every point of $T$ lies on the axis of some hyperbolic element of $\Lambda$.
\end{lem}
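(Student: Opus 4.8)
The plan is to prove \lemref{lem-norm-hyp} by combining the classification of isometric actions on real trees with the previous lemma on unions of axes. Since $\Lambda$ contains a hyperbolic element, the previous lemma (applied to the $\Lambda$-action) tells us that $T_\Lambda := \bigcup \{ \mathrm{Min}_\varphi : \varphi \in \Lambda \text{ hyperbolic}\}$ is a nonempty $\Lambda$-invariant subtree, contained in every $\Lambda$-invariant subtree; it is therefore the unique minimal $\Lambda$-invariant subtree. So to prove both assertions at once it suffices to show $T_\Lambda = T$. First I would observe that $T_\Lambda$ is not only $\Lambda$-invariant but $\Gamma$-invariant: if $\gamma \in \Gamma$ and $\varphi \in \Lambda$ is hyperbolic, then $\gamma \varphi \gamma^{-1} \in \Lambda$ by normality, it is again hyperbolic with $\mathrm{Min}_{\gamma\varphi\gamma^{-1}} = \gamma \cdot \mathrm{Min}_\varphi$, and hence $\gamma \cdot \mathrm{Min}_\varphi \subseteq T_\Lambda$. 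Taking the union over all hyperbolic $\varphi \in \Lambda$ gives $\gamma \cdot T_\Lambda \subseteq T_\Lambda$, and applying this to $\gamma^{-1}$ as well yields $\gamma \cdot T_\Lambda = T_\Lambda$.

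**Concluding.** Now $T_\Lambda$ is a nonempty $\Gamma$-invariant subtree of $T$, and the $\Gamma$-action on $T$ is minimal by hypothesis, so $T_\Lambda = T$. This immediately gives both conclusions: $T = T_\Lambda$ is the minimal $\Lambda$-invariant subtree, so $\Lambda$ acts minimally on $T$; and every point of $T = T_\Lambda$ lies on $\mathrm{Min}_\varphi$ for some hyperbolic $\varphi \in \Lambda$, i.e.\ on the axis of some hyperbolic element of $\Lambda$.

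**Main obstacle.** There is no real obstacle here — the argument is short and formal. The only points requiring a modicum of care are the two standard facts I am invoking: that a conjugate of a hyperbolic isometry is hyperbolic with axis the image of the original axis (immediate from $\|\gamma\varphi\gamma^{-1}\| = \|\varphi\|$ and $d(\gamma\varphi\gamma^{-1}\cdot\gamma x, \gamma x) = d(\varphi x, x)$), and that the union-of-axes subtree is contained in every invariant subtree (the preceding lemma in the excerpt, which itself rests on \cite{Cul-Mor}). One should also make sure the minimality hypothesis on the $\Gamma$-action is used exactly where stated, namely to upgrade ``$T_\Lambda$ is a nonempty $\Gamma$-invariant subtree'' to ``$T_\Lambda = T$''; without it the statement would be false (consider $\Gamma$ acting on a tree with a $\Gamma$-fixed subtree strictly containing $T_\Lambda$).
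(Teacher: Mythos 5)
Your proof is correct and follows essentially the same route as the paper's: both take the union of the axes of the hyperbolic elements of $\Lambda$, identify it as the minimal $\Lambda$-invariant subtree via the preceding lemma, observe that normality of $\Lambda$ in $\Gamma$ makes this union $\Gamma$-invariant, and conclude by minimality of the $\Gamma$-action on $T$. The only difference is that you spell out the conjugation argument ($\mathrm{Min}_{\gamma\varphi\gamma^{-1}} = \gamma \cdot \mathrm{Min}_{\varphi}$) which the paper leaves implicit.
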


\begin{proof}
Let $T'$ be the union of the axes of the hyperbolic elements of $\Lambda$, which is a minimal $\Lambda$-invariant subtree by the previous lemma. To prove the statement, it is enough to prove that $T'=T$. But this is clear because the condition that $\Lambda$ is a normal subgroup of $\Gamma$ implies that $T'$ is also a $\Gamma$-invariant subtree, and by minimality of the action of $\Gamma$ on $T$, one must have $T'=T$.
\end{proof}

%\begin{prop} \label{lac-elementary}
%Let $G$ be a locally compact lacunary hyperbolic group and let $(s_n)$, $\omega$ be such that $\mathrm{Cone}^{\omega}(G,(s_n))$ is a %real tree. The action of $\mathrm{Precone}(G,(s_n))$ on $\mathrm{Cone}^{\omega}(G,(s_n))$ is
%\begin{enumerate}
%\item bounded if and only if $G$ is compact;
%\item never horocyclic;
%\item lineal if and only if $G$ admits $\mathbb{Z}$ as a cocompact lattice.
%\end{enumerate}
%\end{prop}

\subsection{Locally compact groups}

We now aim to recall some structural results about locally compact compactly generated groups that will be needed later. As it is often the case, we will deal separately with connected and totally disconnected groups.

\subsubsection{Connected locally compact groups}

The material of this paragraph is classical. It is an illustration of how the solution of Hilbert's fifth problem can be used to derive results about connected locally compact groups from the study of connected Lie groups.

\begin{prop} \label{prop-radcomp-connec}
Every connected locally compact group has a unique maximal compact normal subgroup, called the compact radical, and the corresponding quotient is a connected Lie group.
\end{prop}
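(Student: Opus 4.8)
The plan is to reduce the statement to the case of connected Lie groups by means of the solution of Hilbert's fifth problem, and then to settle the Lie case using the structure theory of Lie groups. By the Gleason--Yamabe theorem, every connected locally compact group $G$ contains a compact normal subgroup $W$ such that $L := G/W$ is a Lie group; being a quotient of the connected group $G$, this $L$ is a connected Lie group. Granting (see below) that a connected Lie group possesses a largest compact normal subgroup, I would let $M \trianglelefteq L$ be the largest compact normal subgroup of $L$ and let $K \leq G$ be its full preimage under the quotient map $G \to L$. Then $K$ is closed and normal, and it sits in a short exact sequence $1 \to W \to K \to M \to 1$ with $W$ and $M$ compact, so $K$ is compact. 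If $K' \trianglelefteq G$ is any compact normal subgroup, then $K'W/W$ is a compact normal subgroup of $L$, hence lies in $M$, so $K' \subseteq K'W \subseteq K$; thus $K$ is the largest, and in particular the unique maximal, compact normal subgroup of $G$. Finally $G/K \cong L/M$ is the quotient of a connected Lie group by a closed normal subgroup, hence a connected Lie group, which gives the last assertion.

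It remains to prove that a connected Lie group $L$ has a largest compact normal subgroup. First I would observe that the compact connected normal subgroups of $L$ all have dimension at most $\dim L$, so there is one, say $N_0$, of maximal dimension; for any other compact connected normal subgroup $N$ the product $NN_0$ is again a compact connected normal subgroup, it contains $N_0$ and has dimension $\dim N_0$, hence $NN_0 = N_0$ and $N \subseteq N_0$. So $N_0$ is the largest compact connected normal subgroup of $L$. Passing to $L_1 := L/N_0$, there is no nontrivial compact connected normal subgroup; hence every compact normal subgroup of $L_1$ is totally disconnected, therefore finite, therefore central (a finite normal subgroup of a connected group is central). The torsion subgroup $F$ of $Z(L_1)$ thus contains every compact normal subgroup of $L_1$, and it is finite: since $N_0$ was maximal, $Z(L_1)^{\circ}$ contains no torus and is therefore isomorphic to $\mathbb{R}^a$, so $F$ injects into the discrete group $\Delta := Z(L_1)/Z(L_1)^{\circ}$, and $\Delta$ is a quotient of the fundamental group of the connected Lie group $L_1/Z(L_1)$, which is finitely generated; hence $\Delta$ is finitely generated abelian and $F$ is finite. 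Pulling $F$ back to $L$ then gives a compact normal subgroup of $L$ containing every compact normal subgroup, as wanted. (Alternatively one may simply cite the classical fact for Lie groups, e.g.\ from Montgomery--Zippin.)

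I expect the main obstacle to be exactly this last part, i.e.\ showing that the directed family of compact normal subgroups of a Lie group does not escape to infinity; this is where the two structural inputs enter --- the boundedness of the dimensions of compact connected normal subgroups, and the finite generation of the fundamental group of a connected Lie group. Once the Lie case is in hand, the passage back to $G$ is routine bookkeeping: extensions of compact groups by compact groups are compact, and the maximality of $K$ is a short diagram chase through the quotient $G \to G/W$.
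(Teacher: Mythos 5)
Your argument is correct. The paper does not actually prove this statement: it disposes of it with a one-line citation to Theorem~4.6 of Montgomery--Zippin, so what you have written is a genuine, self-contained proof of the cited classical fact. Your reduction to the Lie case via Gleason--Yamabe is exactly the strategy the paper's surrounding prose advertises (``how the solution of Hilbert's fifth problem can be used to derive results about connected locally compact groups from the study of connected Lie groups''), and the bookkeeping through the compact normal subgroup $W$ --- compactness of $K$ as an extension of $M$ by $W$, maximality via $K' \subseteq K'W \subseteq K$, and $G/K \cong L/M$ --- is all fine. The Lie case is where the real content lies, and your three structural inputs are the right ones and are correctly deployed: the dimension bound forces a largest compact \emph{connected} normal subgroup $N_0$ (using that a connected Lie group containing an equidimensional closed connected subgroup as a normal subgroup equals it); after passing to $L_1 = L/N_0$, the no-small-subgroups property of Lie groups makes every compact normal subgroup totally disconnected hence finite hence central; and the finite generation of $\pi_1$ of a connected Lie group, applied to the regular covering $L_1/Z(L_1)^{\circ} \to L_1/Z(L_1)$ with deck group $\Delta = Z(L_1)/Z(L_1)^{\circ}$, bounds the torsion of $Z(L_1)$ once you have ruled out tori in $Z(L_1)^{\circ}$. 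The one step you compress --- that $L_1$ has no nontrivial compact connected normal subgroup --- deserves the half-sentence that the full preimage in $L$ of such a subgroup is compact, connected (being an extension of a connected group by the connected $N_0$) and normal, hence equals $N_0$; with that spelled out the proof is complete. Compared with the paper's approach, yours trades a black-box reference for transparency at the cost of invoking several standard but nontrivial facts (Gleason--Yamabe, Cartan's closed subgroup theorem, no small subgroups, finite generation of $\pi_1$ of Lie groups).
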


%\begin{proof}
%Let $G$ be a connected locally compact group. By Theorem 4.6 of \cite{MontZip}, there exists a compact normal subgroup $K$ of $G$ such that $G/K$ is a connected Lie group. Since on the one hand having a unique maximal compact normal subgroup is preserved by group extension with compact kernel, and on the other hand any connected Lie group has a unique maximal compact normal subgroup, the conclusion follows.%Since the statement holds for connected Lie groups, let $K_2$ be the pre-image of a maximal compact normal subgroup of $G/K_1$ under the map $\pi:G \twoheadrightarrow G/K_1$. It is easily checked that $K=K_1K_2$ is a compact normal subgroup of $G$, and it is maximal by maximality of $\pi(K_2)$ in $G/K_1$.
%\end{proof}

\begin{proof}
See Theorem 4.6 of \cite{MontZip}.
\end{proof}

If $G$ is a topological group, we denote by $G^\circ$ the connected component of the identity. It is a closed characteristic subgroup of $G$, and the quotient $G/G^\circ$, endowed with the quotient topology, is a totally disconnected group. 

\begin{cor} \label{cor-red-lie}
Every locally compact group $G$ has a compact subgroup $K$ that is characteristic and contained in $G^\circ$, such that the quotient $G^\circ/K$ is a connected Lie group without non-trivial compact normal subgroup.
\end{cor}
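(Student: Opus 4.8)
The plan is to deduce this directly from Proposition~\ref{prop-radcomp-connec} applied to the identity component. Note first that $G^\circ$ is a closed subgroup of $G$, hence a locally compact group in the subspace topology, and it is connected by definition, so Proposition~\ref{prop-radcomp-connec} applies to it: let $K$ be the compact radical of $G^\circ$, that is, its unique maximal compact normal subgroup, so that $G^\circ/K$ is a connected Lie group. By construction $K \subseteq G^\circ$, and it remains only to check the two asserted properties.

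For the fact that $G^\circ/K$ has no non-trivial compact normal subgroup, I would argue by pulling back. If $\bar N \triangleleft G^\circ/K$ is compact, then its preimage $N$ under the projection $G^\circ \to G^\circ/K$ is a normal subgroup of $G^\circ$ which sits in an extension of the compact group $\bar N$ by the compact group $K$, hence is itself compact. Maximality of $K$ among compact normal subgroups of $G^\circ$ then forces $N \subseteq K$, i.e.\ $\bar N$ is trivial.

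For the fact that $K$ is characteristic in $G$, I would exploit the word \emph{unique} in Proposition~\ref{prop-radcomp-connec}: since $K$ is the only maximal compact normal subgroup of $G^\circ$, it is invariant under every topological automorphism of $G^\circ$, hence characteristic in $G^\circ$. Moreover $G^\circ$ is characteristic in $G$, and every topological automorphism of $G$ restricts to a topological automorphism of $G^\circ$; composing these two facts shows that every topological automorphism of $G$ preserves $K$.

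This is the whole argument, and there is no real obstacle: the statement is a short bookkeeping consequence of Proposition~\ref{prop-radcomp-connec}. The only point that needs a moment's care is that the fact that $K$ is characteristic \emph{in $G$} (and not merely normal in $G^\circ$) rests precisely on the \emph{uniqueness} of the compact radical — a canonically defined subgroup being automatically invariant under all automorphisms — which is exactly what Proposition~\ref{prop-radcomp-connec} supplies.
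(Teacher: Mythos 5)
Your proposal is correct and follows exactly the paper's route: take $K$ to be the compact radical of $G^\circ$ from Proposition~\ref{prop-radcomp-connec}, and deduce that $K$ is characteristic in $G$ from its uniqueness in $G^\circ$ together with the fact that $G^\circ$ is characteristic. You simply spell out the pull-back argument for the absence of compact normal subgroups in $G^\circ/K$, which the paper leaves implicit.
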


\begin{proof}
Take $K$ the compact radical of $G^\circ$. Being characteristic in the characteristic subgroup $G^\circ$, it is characteristic in $G$.
\end{proof}

The following result will be used in Section \ref{subsec-cut}.

\begin{cor} \label{cor-qi-law}
Every connected-by-compact locally compact group is quasi-isometric to a compactly generated solvable group.
\end{cor}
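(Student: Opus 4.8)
The plan is to reduce, up to quasi-isometry, to the case of a connected Lie group, and then to invoke the structure theory of connected Lie groups. Since $G$ is connected-by-compact, the identity component $G^\circ$ is a closed cocompact subgroup of $G$; as connected locally compact groups are compactly generated, $G$ is itself compactly generated, and by the Milnor--Schwarz lemma for locally compact groups (a closed cocompact subgroup of a compactly generated locally compact group is compactly generated, and the inclusion is a quasi-isometry) $G$ is quasi-isometric to $G^\circ$. Applying Corollary~\ref{cor-red-lie} to $G$ then provides a compact characteristic subgroup $K \subseteq G^\circ$ such that $G^\circ/K$ is a connected Lie group with no non-trivial compact normal subgroup. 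Dividing by the compact normal subgroup $K$ is a quasi-isometry, so $G$ is quasi-isometric to $G/K$; moreover $G^\circ/K$ is the identity component of $G/K$ --- it is a connected normal subgroup whose quotient is $G/G^\circ$, which is totally disconnected --- and it is cocompact in $G/K$ since $G/G^\circ$ is compact. Applying the Milnor--Schwarz lemma once more, $G$ is quasi-isometric to the connected Lie group $G^\circ/K$. As connected Lie groups are compactly generated, it remains to prove that \emph{every connected Lie group is quasi-isometric to a solvable Lie group}.

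For this I would use the Levi--Malcev and Iwasawa decompositions. Let $L_0$ be a connected Lie group and write $L_0 = R \cdot S$ (Levi--Malcev), where $R = \mathrm{Rad}(L_0)$ is the closed connected solvable radical and $S$ is a semisimple Levi subgroup. Suppose first that $S$ has finite centre. Then the Iwasawa decomposition $S = K_S A_S N_S$ exhibits the solvable subgroup $P := A_S N_S$ of $S$ as cocompact in $S$, so that $R \cdot P$ is a closed cocompact subgroup of $L_0$; being an extension of the solvable group $P$ by the solvable group $R$, the group $R \cdot P$ is solvable, and by the Milnor--Schwarz lemma it is quasi-isometric to $L_0$, hence is the desired compactly generated solvable group. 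In general one first discards the maximal compact normal subgroup of $S$ (a quasi-isometry) to reduce to $S$ semisimple with no compact factor, and the only remaining issue is that $S$ may have infinite centre; modulo that, $S$ is again quasi-isometric to a solvable Lie group, and injecting this back gives the conclusion. This reduction for connected Lie groups is classical; rather than reprove it I would cite the quasi-isometric description of connected Lie groups (in the work of Cornulier, and the book of Cornulier and Tessera on the metric geometry of locally compact groups).

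The only genuine obstacle is precisely this last point for semisimple groups with infinite centre, the model case being $\widetilde{\mathrm{SL}}_2(\mathbb{R})$. Such a group has no cocompact solvable subgroup at all --- a codimension-zero closed subgroup of a connected Lie group is the whole group --- so the argument above does not apply directly; instead one must use the non-trivial fact that, when $\widetilde{S}$ is a central extension of a finite-centre form $\overline{S}$ with kernel $\mathbb{Z}^{k}$, the extension is quasi-isometrically a direct product, so that $\widetilde{S}$ is quasi-isometric to $\overline{S} \times \mathbb{R}^{k}$ and hence, via the Iwasawa decomposition of $\overline{S}$, to a solvable group. Concretely, $\widetilde{\mathrm{SL}}_2(\mathbb{R})$ is quasi-isometric to $\mathbb{H}^{2} \times \mathbb{R}$, and $\mathbb{H}^{2} \times \mathbb{R}$ is quasi-isometric to $(\mathbb{R} \rtimes \mathbb{R}) \times \mathbb{R}$, which is solvable. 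Everything else --- passing to $G^\circ$, killing compact normal subgroups, using cocompactness, and Iwasawa for finite-centre semisimple groups --- is routine.
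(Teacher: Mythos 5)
Your chain of reductions (pass to the cocompact subgroup $G^\circ$, kill the compact radical via Corollary \ref{cor-red-lie}, reduce to a connected Lie group) is exactly the paper's, and your endgame via Levi--Malcev and Iwasawa is a legitimate way to supply the ``classical fact'' that the paper simply cites, namely that every connected Lie group has a closed cocompact solvable subgroup. As written your argument does reach a correct conclusion, because the patch you propose for the infinite-centre case (quasi-isometric triviality of central extensions with discrete finitely generated kernel, so that $\widetilde{S}$ is quasi-isometric to $\overline{S}\times\mathbb{R}^k$) is a true theorem. But it is a much heavier tool than what is needed, and the assertion you use to motivate it is false.

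Concretely: it is not true that $\widetilde{\mathrm{SL}}_2(\mathbb{R})$ has no cocompact solvable subgroup. Your parenthetical justification only rules out a cocompact solvable subgroup of codimension zero; it says nothing about positive codimension. In fact the preimage in $\widetilde{\mathrm{SL}}_2(\mathbb{R})$ of the subgroup $AN\subset\mathrm{PSL}_2(\mathbb{R})$ is a closed subgroup isomorphic to $Z\times AN\cong\mathbb{Z}\times(\mathbb{R}\rtimes\mathbb{R})$ (the centre $Z$ lies in $K$ and meets $AN$ trivially), it is solvable, and the quotient by it is $\mathrm{PSO}(2)$, a circle --- so it is cocompact. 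This is precisely why the classical statement is phrased with the caveat ``possibly non-connected'': for a semisimple group $S$ with infinite centre one takes $Z(S)\cdot A_SN_S$, which is closed, solvable and cocompact even though $A_SN_S$ alone is not cocompact; feeding this into your $R\cdot P$ construction handles the general case with no appeal to bounded Euler classes or to the quasi-isometry $\widetilde{\mathrm{SL}}_2(\mathbb{R})\sim\mathbb{H}^2\times\mathbb{R}$. (A small additional remark: ``discarding the maximal compact normal subgroup of $S$'' is not literally a quotient of $L_0$, since a compact factor of the Levi subgroup need not be normal in $L_0$; but this step is unnecessary anyway, as compact factors contribute trivially to $A_SN_S$ and are absorbed into the compact quotient.) So: same skeleton as the paper, a correct but needlessly deep detour at the end, and one false intermediate claim that you should delete or replace by the explicit non-connected cocompact solvable subgroup above.
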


\begin{proof}
Clearly it is enough to prove the result for a connected locally compact group $G$. Modding out by the compact radical of $G$, we may assume by Proposition \ref{prop-radcomp-connec} that $G$ is a connected Lie group, and the result now follows from the classical fact that any connected Lie group has a (possibly non-connected) cocompact solvable Lie subgroup.
\end{proof}

\subsubsection{Locally compact groups with compact open subgroups}

Recall that if $G$ is a locally compact totally disconnected group, then according to van Dantzig's theorem, compact open subgroups of $G$ exist and form a basis of identity neighbourhoods. In this section we will deal with the slightly more general class of groups, namely the class of groups $G$ having compact open subgroups. Note that by van Dantzig's theorem, this is equivalent to saying that $G$ is a locally compact group with a compact identity component.

The following result, originally due to Abels, associates a connected locally finite graph to any compactly generated locally compact group with a compact open subgroup. 

\begin{prop} \label{Cay-Ab}
Let $G$ be a compactly generated locally compact group having a compact open subgroup. Then there exists a connected locally finite graph $X$ on which $G$ acts by automorphisms, transitively and with compact open stabilizers on the set of vertices.
\end{prop}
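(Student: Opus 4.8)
The plan is to build the vertex set of $X$ from the coset space $G/U$ of a compact open subgroup $U$, and to define edges using a compact symmetric generating set. First I would fix a compact open subgroup $U \le G$, which exists by van Dantzig's theorem (or rather by the hypothesis plus van Dantzig applied to $G/G^\circ$, since $G$ has compact identity component). Since $G$ is compactly generated, choose a compact symmetric generating set $S$; after replacing $S$ by $SU \cup US \cup U = USU$ (still compact, still symmetric, still generating), I may assume $U \subseteq S$ and $USU = S$, so that $S$ is a union of finitely many double cosets $U g_1 U, \dots, U g_k U$ — finiteness holds because $S$ is compact and the $UgU$ are open. Now set the vertex set $V(X) = G/U$ with the left $G$-action by translation, and declare two distinct cosets $gU, hU$ adjacent iff $g^{-1}h \in S$ (equivalently $gU \subseteq hS = hSU$); this relation is symmetric because $S = S^{-1}$ and $U \subseteq S$ forces $S = USU = S^{-1}$, and it is plainly $G$-invariant.

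The key steps after that are verifying the three asserted properties. \emph{Local finiteness}: the neighbours of the base vertex $U$ are the cosets $gU$ with $g \in S$, and since $S = \bigcup_{i=1}^k Ug_iU$ with each $Ug_iU$ a finite union of left $U$-cosets (the $U$-orbits on $G/U$ through $g_iU$ are finite because $U$ is compact and open, hence $U \cap g_iUg_i^{-1}$ has finite index in $U$), there are finitely many such cosets; by vertex-transitivity every vertex has the same finite degree. \emph{Transitivity with compact open stabilizers}: $G$ acts transitively on $G/U$ by construction, and the stabilizer of the base vertex $U$ is exactly $U$, which is compact and open; moreover this action is continuous since the $G$-action on $G/U$ is continuous and the edge set is $G$-invariant. \emph{Connectedness}: the vertices reachable from $U$ by edge-paths of length $\le n$ are exactly the cosets $gU$ with $g \in S^n$ (an easy induction using $U \subseteq S$, so $S^n$ absorbs $U$), and since $S$ generates $G$ we have $\bigcup_n S^n = G$, whence every vertex is reachable.

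The main obstacle — really the only subtle point — is the interplay between the topology and the combinatorics: one must arrange that $U$ is \emph{simultaneously} the vertex stabilizer, a subset of the generating set, and absorbed under multiplication by $S$, so that the combinatorial graph faithfully encodes the word metric of $(G,S)$ up to the scale of $U$. This is handled cleanly by the normalization $S \mapsto USU$ described above. A secondary point requiring a word of care is the finiteness of $U$-orbits on $G/U$, i.e.\ that each double coset $UgU$ splits into finitely many left cosets of $U$; this follows because $U$ is compact and $gUg^{-1} \cap U$ is open in $U$, hence of finite index, so $U/(U \cap gUg^{-1})$ is finite. Once these are in place, all three conclusions of the proposition are immediate from the construction, and one observes as a bonus that the orbit map $G \to X$, $g \mapsto gU$, is a quasi-isometry, which is the property that makes this graph useful in the sequel.
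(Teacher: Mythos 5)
Your construction is correct and is exactly the one the paper recalls after the statement: pass to a $K$-bi-invariant compact generating set (your $USU$), take $G/K$ as vertex set, join cosets differing by a generator, and verify local finiteness via the decomposition of $S$ into finitely many double cosets, each a finite union of left cosets since $U\cap g_iUg_i^{-1}$ is open of finite index in the compact group $U$. (One cosmetic slip: $SU\cup US\cup U$ is not literally equal to $USU$; replace $S$ by $U(S\cup\{e\})U$ to get the compact, symmetric, bi-invariant generating set containing $U$ that you actually use --- nothing else is affected.)
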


Recall that the construction consists in choosing a compact open subgroup $K$, and a compact generating subset $S$ of $G$ that is bi-invariant under the action of $K$. We take $G/K$ as vertex set for the graph $X$, and two different cosets $g_1K$ and $g_2K$ are adjacent if there exists $s \in S^{\pm 1}$ such that $g_2=g_1s$. The resulting graph is connected and locally finite. The action of $G$ on $X$ is vertex-transitive, and the stabilizer of the base-vertex is the compact open subgroup $K$. The graph $X$ is called the Cayley-Abels graph of $G$ associated to the compact open subgroup $K$ and compact generating subset $S$. %Note that the orbital map $G \rightarrow X$, $g \mapsto gK$, is onto and we have $d_X(g_1K,g_2K) \leq d_S(g_1,g_2) \leq d_X(g_1K,g_2K) + 1$ for every $g_1,g_2 \in G$. %In particular there exists a universal constant, not depending on $G$, $K$ or $S$, such that $(X,d_X)$ is $\delta$-hyperbolic if and only if $(G,d_G)$ is $C\delta$-hyperbolic.
In some sense, the following result is a topological analogue of the fact that any finitely generated group is a quotient of a finitely generated free group. The result is not new (see for example \cite[Proposition 8.A.15]{Cor-dlH}), but the proof we give here is different from the one in \cite{Cor-dlH}. 

\begin{prop} \label{prop-free}
Let $G$ be a compactly generated locally compact group having a compact open subgroup. Then there exists a compactly generated locally compact group $G_0$ acting on a locally finite tree, transitively and with compact open stabilizers on the set of vertices; and an open epimorphism $\pi: G_0 \twoheadrightarrow G$ with discrete kernel.
\end{prop}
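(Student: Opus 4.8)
The plan is to pass to the universal cover of a Cayley--Abels graph of $G$ and take $G_0$ to be the group of all lifts of elements of $G$. By Proposition~\ref{Cay-Ab} choose a connected locally finite graph $X$ with a vertex-transitive $G$-action having compact open vertex stabilizers; fix a base vertex $v_0$, set $K=\mathrm{Stab}_G(v_0)$, and write $g_X\in\mathrm{Aut}(X)$ for the automorphism induced by $g\in G$. Let $p\colon\widetilde X\to X$ be a universal cover. Since $X$ is a connected graph, $\widetilde X$ is a simply connected graph, hence a tree, and it is locally finite because $p$ is a local isomorphism. Endow $\mathrm{Aut}(\widetilde X)$ with the permutation topology, which makes it a totally disconnected locally compact group with compact open vertex stabilizers (these are profinite). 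I would then set
\[
G_0=\bigl\{(\varphi,g)\in\mathrm{Aut}(\widetilde X)\times G \; : \; p\circ\varphi=g_X\circ p\bigr\},
\]
with $\pi(\varphi,g)=g$ and $G_0$ acting on $\widetilde X$ through the first coordinate.

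First I would check that $\pi$ is surjective: the lifting criterion applied to the map $g_X\circ p\colon\widetilde X\to X$ (whose domain is simply connected) produces an automorphism $\varphi$ of $\widetilde X$ with $p\circ\varphi=g_X\circ p$, so $(\varphi,g)\in G_0$. Its kernel is exactly the group of deck transformations of $p$, which is isomorphic to $\pi_1(X)$, hence free, and acts freely on $\widetilde X$. Giving $G_0$ the subspace topology, one sees that $G_0$ is closed in $\mathrm{Aut}(\widetilde X)\times G$: the defining relation says $p(\varphi x)=g_X(p x)$ for every vertex $x$ of $\widetilde X$, and for each fixed $x$ this is a clopen condition (the vertex set of $X$ being discrete) depending continuously on $(\varphi,g)$, so the intersection over all $x$ is closed. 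Thus $G_0$, being closed in the locally compact group $\mathrm{Aut}(\widetilde X)\times G$, is locally compact.

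It remains to verify the geometric and topological properties. Transitivity of $G_0$ on the vertex set of $\widetilde X$ combines vertex-transitivity of $G$ on $X$ with transitivity of the deck group on the fibers of $p$: given vertices $\widetilde v,\widetilde w$ of $\widetilde X$, pick $g\in G$ with $g_X(p\widetilde v)=p\widetilde w$, lift it to some $\varphi_1$, and correct by the deck transformation sending $\varphi_1\widetilde v$ to $\widetilde w$. Fixing $\widetilde v_0\in p^{-1}(v_0)$, the stabilizer $\mathrm{Stab}_{G_0}(\widetilde v_0)$ equals $G_0\cap(\mathrm{Stab}_{\mathrm{Aut}(\widetilde X)}(\widetilde v_0)\times K)$ — since a pair fixing $\widetilde v_0$ has $g$-component fixing $v_0$ — so it is closed in a compact set and therefore compact, and it is open in $G_0$ because $\mathrm{Stab}_{\mathrm{Aut}(\widetilde X)}(\widetilde v_0)$ is open; in particular the $G_0$-action on $\widetilde X$ is continuous and has compact open vertex stabilizers. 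Moreover $\pi$ restricts on $\mathrm{Stab}_{G_0}(\widetilde v_0)$ to a continuous bijection onto $K$ (uniqueness of the lift fixing $\widetilde v_0$), hence to a homeomorphism onto the open subgroup $K$ of $G$, which forces $\pi$ to be open. Finally, $\ker\pi\cap\mathrm{Stab}_{G_0}(\widetilde v_0)$ is trivial because the deck group acts freely, so $\ker\pi$ is discrete; and $G_0$ is compactly generated, being generated by the compact subgroup $\mathrm{Stab}_{G_0}(\widetilde v_0)$ together with one element per neighbour of $\widetilde v_0$ in the locally finite connected tree $\widetilde X$.

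The conceptual content — universal covers of graphs are trees and automorphisms lift — is immediate; the work is the topological bookkeeping in the last two paragraphs, namely that the fiber-product group is closed (hence locally compact), has compact open vertex stabilizers, and that $\pi$ is open with discrete kernel. The one point needing a little care is that the $G$-action on $X$ need not be faithful, which is why $G_0$ must be defined as a fiber product over $G$ rather than simply as a subgroup of $\mathrm{Aut}(\widetilde X)$.
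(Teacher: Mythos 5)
Your proof is correct, and it takes a genuinely different route from the one in the paper. You pass to the universal cover $\widetilde X$ of a Cayley--Abels graph of $G$ (a locally finite tree) and realize $G_0$ as the fiber product $\{(\varphi,g)\in\mathrm{Aut}(\widetilde X)\times G : p\circ\varphi=g_X\circ p\}$, getting local compactness from closedness in a product, openness of $\pi$ from the homeomorphism $\mathrm{Stab}_{G_0}(\widetilde v_0)\to K$, and discreteness of the kernel from freeness of the deck action. The paper instead builds $G_0$ algebraically, as the abstract presentation $\left\langle S\mid R_{K,S}\right\rangle$ on a $K$-bi-invariant compact generating set $S$ with only the ``triangular'' relations $s_1s_2=k$ ($k\in K$) imposed; it then checks that the copy $K_0$ of $K$ survives and is commensurated in $G_0$, topologizes $G_0$ by declaring $K_0$ compact open, and verifies that the resulting Cayley--Abels graph is a tree because every loop decomposes into backtracks. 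The two constructions buy slightly different things: the presentation approach makes the analogy with ``every finitely generated group is a quotient of a free group'' completely transparent and keeps all verifications combinatorial, at the cost of the commensuration argument and an appeal to the Bourbaki criterion for extending a group topology; your covering-space approach is shorter on the topological side (no commensuration, no topologization lemma) and identifies the kernel concretely as the deck group $\pi_1(X)$, at the cost of the covering-theory bookkeeping. The only places where you are terse are standard: that the lift of $g_X\circ p$ is invertible (lift $g_X^{-1}\circ p$ and compose), and that surjectivity of $\pi$ on the stabilizer uses the existence of the lift based at $\widetilde v_0$ when $g\in K$; neither is a gap.
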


\begin{proof}
Let $K$ be a compact open subgroup of $G$, and $S$ a $K$-bi-invariant compact symmetric generating subset of $G$ containing the identity. Note that this implies that $K \subset S$. We let $R_{K,S}$ be the set of words of the form $s_1 s_2 k^{-1}$, with $s_1,s_2 \in S$ and $k \in K$, when the relation $s_1s_2=k$ holds in the group $G$. We denote by $G_0$ the group defined by the abstract presentation $G_0 = \left\langle S \mid R_{K,S}\right\rangle$. Note that by construction, the group $G_0$ comes equipped with a natural morphism $\pi: G_0 \rightarrow G$, which is onto since $S$ is a generating subset of $G$.

We claim that $G_0$ admits a commensurated subgroup isomorphic to the subgroup $K$ of $G$. Indeed, let $K_0$ be the subgroup of $G_0$ generated by $K \subset S$ (here $K$ is seen as a subset of the abstract generating set $S$). To prove that $K_0$ is isomorphic to $K$, it is enough to prove that $K_0$ intersects trivially the kernel of $\pi$. But this is clear, because by construction all the relations in $G$ of the form $k_1 k_2 = k_3$ are already satisfied in $G_0$, so the map $\pi$ induces an isomorphism between $K_0$ and the subgroup $K$ of $G$. Now it remains to prove that $K_0$ is commensurated in $G_0$. Since by definition $S$ generates $G_0$, it is enough to prove that the subset $S$ commensurates $K_0$ in $G_0$. Being compact and open in $G$, the subgroup $K$ is commensurated in $G$. Therefore for every $s \in S$ there exists a finite index subgroup $K^{(s)} \leq K$ such that $s K^{(s)} s^{-1} \leq K$. This can be rephrased by saying that for every $k^{(s)} \in K^{(s)}$, there exists $k \in K$ such that $s k^{(s)} s^{-1} = k$. But now using twice the set or relators $R_{K,S}$, it is not hard to check that these relations hold in $G_0$ as well, which implies that the subgroup $K_0$ is commensurated in $G_0$. This finishes the proof of the claim.

Now if we equip $K_0$ with the pullback topology under the restriction of the map $\pi_{/K_0}: K_0 \stackrel{\sim}{\rightarrow} K$, we obtain a group topology on $G_0$ turning $K_0$ into a compact open subgroup \cite[Chapter 3]{Bourb-topo}. Note that by construction the epimorphism $\pi: G_0 \twoheadrightarrow G$ is open and has a discrete kernel (because the latter intersects trivially the open subgroup $K_0$).

To end the proof of the proposition, we need to construct a locally finite tree on which $G_0$ acts with the desired properties. Let us consider the Cayley-Abels graph $X$ of $G_0$ associated to $K_0$ and $S$. The action of $G_0$ on $X$ is transitive and with compact open stabilizers on the set of vertices, so the only thing that needs to be checked is that $X$ is a tree, i.e.\ $X$ does not have non-trivial loops. To every loop in $X$ can be associated a word $s_1 \cdots s_n$ so that the relation $s_1 \cdots s_n k = 1$ holds in $G_0$ for some $k \in K$. This means that in the free group over the set $S$, we have a decomposition of the form \[ s_1 \cdots s_n k = \prod_{i=1}^{N} w_i \left(s_{i,1} s_{i,2} k_i^{-1} \right) w_i^{-1},\] with $s_{i,1} s_{i,2} k_i^{-1} \in R_{K,S}$. Now remark that in $X$, any loop indexed by a word of the form $s_{i,1} s_{i,2} k_i^{-1} \in R_{K,S}$ is nothing but a simple backtrack, and it follows that we have a decomposition of our original loop as a sequence of backtracks. This implies that $X$ is a tree and finishes the proof.
\end{proof}

\section{Preliminary results on asymptotic cones} \label{sec-preli-cones}

This section gathers a few lemmas that will be used in the sequel. As we have seen earlier, any asymptotic cone of a locally compact compactly generated group comes equipped with a natural isometric group action. The next lemma describes to what extent this data varies for instance when modding out by a compact normal subgroup or passing to a cocompact normal subgroup. We point out that in the second statement, the assumption that $\pi(G)$ is normal in $Q$ is essential (think of $\mathbb{R} \rtimes \mathbb{R}$ inside $\mathrm{SL}_2(\mathbb{R})$).

\begin{lem} \label{lem-cone-sametype}
Consider a proper homomorphism with cocompact image \mbox{$\pi: G \rightarrow Q$} between locally compact compactly generated groups. Then for every scaling sequence $\mathbf{s}$ and non-principal ultrafilter $\omega$, the induced map at the level of asymptotic cones \mbox{$\tilde{\pi}: \cone \rightarrow \mathrm{Cone}^{\omega}(Q,\mathbf{s})$} is a bi-Lipschitz homeomorphism. 

If we assume in addition that $\cone$ (and hence $\mathrm{Cone}^{\omega}(Q,\mathbf{s})$) is a real tree and that $\pi(G)$ is normal in $Q$, then the actions of $\pc(G,\mathbf{s})$ on $\cone$ and of $\pc(Q,\mathbf{s})$ on $\mathrm{Cone}^{\omega}(Q,\mathbf{s})$ have the same type.
\end{lem}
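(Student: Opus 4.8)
The plan is to first pin down the map $\tilde\pi$ and check it is a bi-Lipschitz homeomorphism, and then, for the second assertion, to transport the type of the action in two stages: first along $\tilde\pi$, and then from a suitable transitive normal subgroup of $\pc(Q,\mathbf{s})$ up to the whole group. For the first assertion I would use that a proper homomorphism with cocompact image between compactly generated locally compact groups is a quasi-isometry: its kernel $\pi^{-1}(\{e\})$ is compact, $\pi$ is Lipschitz (since $\pi(S)$ is bounded for a compact generating set $S$ of $G$) and metrically proper, and $\pi(G)$ is a closed cocompact subgroup of $Q$; combining these gives constants $L\geq 1$, $C\geq 0$ with $L^{-1}|g|_{S}-C\leq|\pi(g)|_{S'}\leq L|g|_{S}+C$ for a suitable compact generating set $S'$ of $Q$, and a constant $D$ with $\pi(G)$ being $D$-cobounded in $Q$. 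Then $\tilde\pi\colon(g_n)^{\omega}\mapsto(\pi(g_n))^{\omega}$ is well defined, the additive constants wash out after dividing by $s_n\to\infty$ so $\tilde\pi$ is $L$-bi-Lipschitz, and it is onto because any $(q_n)^{\omega}$ is the image of $(g_n)^{\omega}$ for $g_n$ chosen with $d_{S'}(\pi(g_n),q_n)\leq D$; hence it is a bi-Lipschitz homeomorphism (one could also simply invoke the fact, recalled above, that quasi-isometric spaces have bi-Lipschitz homeomorphic asymptotic cones). The key point to record is that $\tilde\pi$ intertwines the two actions along the homomorphism $\pc(\pi)\colon\pc(G,\mathbf{s})\to\pc(Q,\mathbf{s})$, $(g_n)\mapsto(\pi(g_n))$.

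For the second assertion, assume $\cone$, hence $T':=\mathrm{Cone}^{\omega}(Q,\mathbf{s})$, is a real tree, and $\pi(G)$ is normal in $Q$; I may assume $T'$ is not a point, as otherwise both actions are bounded. Write $\Gamma'=\pc(Q,\mathbf{s})$ and $\Lambda=\pc_Q(\pi(G),\mathbf{s})$. I would first record three facts: that $\Lambda=\pc(\pi)(\pc(G,\mathbf{s}))$ — the only nontrivial inclusion being that a $\pi(G)$-valued $\mathbf{s}$-linear sequence admits $\mathbf{s}$-linear preimages in $G$, which holds because $\ker\pi$ is compact and $\pi$ is a quasi-isometry, so the word length of any preimage $g_n$ of $q_n=\pi(g_n)$ is bounded above by an affine function of $|q_n|_{S'}$; that $\Lambda$ acts transitively on $T'$, because $\pi(G)$ is $D$-cobounded in $Q$ so every point $(q_n)^{\omega}$ lies in the $\Lambda$-orbit of the base point; and that $\Lambda\trianglelefteq\Gamma'$, because $\pi(G)$ is normal in $Q$.

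The argument then splits into two steps. Step (A): the action of $\pc(G,\mathbf{s})$ on $\cone$ and the action of $\Lambda$ on $T'$ have the same type. This follows from $\tilde\pi$ being a $\pc(\pi)$-equivariant bi-Lipschitz homeomorphism together with $\pc(\pi)$ mapping $\pc(G,\mathbf{s})$ onto $\Lambda$: orbits stay bounded or unbounded, an element is hyperbolic on one side if and only if its image is hyperbolic on the other (compare translation lengths, using surjectivity of $\tilde\pi$ to take the infimum over $T'$), and the induced equivariant boundary homeomorphism $\partial\tilde\pi$ sends the endpoints of a hyperbolic element to those of its image and carries fixed ends to fixed ends, so each of the five types of the classification is preserved. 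Step (B): the actions of $\Lambda$ and of $\Gamma'\supseteq\Lambda$ on $T'$ have the same type. Since $\Lambda$ acts transitively it acts minimally, so its action is neither bounded (as $T'$ is not a point) nor horocyclic; the same holds for $\Gamma'$, which is transitive as well. If $T'$ is isometric to $\mathbb{R}$ both actions are lineal, and otherwise neither is. In the remaining case both are focal or of general type, and the alternative transfers: if $\Lambda$ is of general type then two of its hyperbolic elements without common endpoint witness that $\Gamma'$ is too; while if $\Lambda$ is focal it fixes a unique end $\xi$ of $T'$, and since $\Gamma'$ normalizes $\Lambda$ it preserves the singleton set of $\Lambda$-fixed ends, hence fixes $\xi$, so $\Gamma'$ is not of general type. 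Combining Steps (A) and (B) yields the statement.

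I expect Step (B) to be the main obstacle, in the sense that it is where one must be precise about which features of the real-tree classification are invoked — minimality rules out the bounded and horocyclic types, lineality depends only on the isometry type of $T'$, and focality is characterized by the existence of a unique fixed end — and, above all, it is where the normality hypothesis is genuinely needed, precisely to upgrade ``$\Lambda$ fixes a unique end'' to ``$\Gamma'$ fixes an end'' and so exclude the general-type case. A more routine but still necessary point of care is that $\tilde\pi$ is only bi-Lipschitz, not an isometry, so in Step (A) the invariance of the type has to be extracted from the equivariance of $\tilde\pi$ rather than from any intrinsic comparison of the two trees.
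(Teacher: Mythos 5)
Your proposal is correct and follows essentially the same route as the paper: the paper likewise observes that $\pi$ is a quasi-isometry, so $\tilde\pi$ is an equivariant bi-Lipschitz homeomorphism, and then uses normality of $\pi(G)$ to transfer the (finite) invariant subset of the boundary from the image of $\pc(G,\mathbf{s})$ up to all of $\pc(Q,\mathbf{s})$, the converse direction being immediate. Your write-up merely makes explicit the intermediate subgroup $\pc_Q(\pi(G),\mathbf{s})$ and the case analysis (transitivity excluding bounded and horocyclic types, the line case, and the focal versus general-type dichotomy) that the paper leaves implicit.
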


\begin{proof}
Since the homomorphism $\pi$ has compact kernel and cocompact image, it is a quasi-isometry. Therefore  the map $\tilde{\pi}$ defined by $\tilde{\pi}((g_n)^{\omega}) = (\pi(g_n))^{\omega}$ is a bi-Lipschitz homeomorphism, which is equivariant under the actions of $\pc(G,\mathbf{s})$. 

It follows that $\mathrm{Cone}^{\omega}(Q,\mathbf{s})$ is a real tree if and only if $\cone$ is a real tree. When this is so and when $\pi(G)$ is supposed to be normal in $Q$, if $\pc(G,\mathbf{s})$ stabilizes some finite subset in the boundary of $\cone$, then the same holds for the group $\pc(Q,\mathbf{s})$. The converse implication being clear, the proof is complete.
\end{proof}

Recall that a metric space $(X,d)$ is coarsely connected if there exists a constant $c>0$ such that for any $x,y \in X$, there exists a sequence of points \mbox{$x = x_0, x_1, \ldots, x_n = y$} such that $d(x_i,x_{i+1}) \leq c$ for every $i=0,\ldots,n-1$. 

\begin{lem} \label{lem-cone-unb}
Let $(X,d)$ be a coarsely connected non-empty metric space. If $(X,d)$ is unbounded, then so are all its asymptotic cones. 
\end{lem}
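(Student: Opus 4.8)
The plan is to prove the contrapositive in spirit: assuming $(X,d)$ is coarsely connected and unbounded, I will exhibit, in any asymptotic cone, a sequence of points escaping to infinity from the base point. Fix a scaling sequence $\mathbf{s}=(s_n)$ and a non-principal ultrafilter $\omega$, let $e \in X$ be the chosen base point, and let $c>0$ be the coarse-connectedness constant. The key observation is that since $X$ is unbounded, for every $n$ we may pick a point $y_n \in X$ with $d(y_n,e) \geq s_n$. Using a $c$-path from $e$ to $y_n$, I can travel along it and stop at the first vertex $x_n$ whose distance from $e$ lies in the window $[s_n, s_n + c]$; such a vertex exists because consecutive vertices of the path differ by at most $c$ in distance to $e$, the path starts at distance $0$, and ends at distance at least $s_n$, so the distance-to-$e$ function along the path cannot jump over the interval $[s_n, s_n+c]$.

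Next I would verify that the sequence $(x_n)$ is $\mathbf{s}$-linear: indeed $d(x_n,e) \leq s_n + c \leq 2 s_n$ for $n$ large (since $s_n \to \infty$), so $(x_n) \in \mathrm{Precone}(X,d,\mathbf{s})$, and it defines a point $(x_n)^\omega$ of $\mathrm{Cone}^\omega(X,d,\mathbf{s})$. Now compute the distance in the cone from $(x_n)^\omega$ to the base point $(e)^\omega$: we have
\[
d_\omega\big((x_n)^\omega,(e)^\omega\big) = \mathrm{lim}^\omega \, \frac{d(x_n,e)}{s_n} \geq \mathrm{lim}^\omega \, \frac{s_n}{s_n} = 1.
\]
Hence the cone contains a point at distance at least $1$ from the base point. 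Since asymptotic cones are homogeneous metric spaces (the group $\mathrm{Precone}(X,d,\mathbf{s})$ need not act here, but for a general metric space one can instead rescale), I should be slightly careful: homogeneity is guaranteed when $X$ is a group, but for a general coarsely connected unbounded metric space I instead run the same construction with the window $[R s_n, R s_n + c]$ for an arbitrary fixed $R>0$, producing a point of the cone at distance at least $R$ from the base point. As $R$ is arbitrary, the cone is unbounded.

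The only genuinely delicate point is the choice of stopping vertex $x_n$ along the $c$-path and the elementary estimate that the distance-to-$e$ function, being $c$-Lipschitz along the path in the sense that $|d(x_{i+1},e) - d(x_i,e)| \leq d(x_{i+1},x_i) \leq c$, must attain a value in the interval $[R s_n, R s_n + c]$ — this is just a discrete intermediate value argument and poses no real obstacle. One also needs $R s_n + c \leq 2 R s_n$ eventually, which holds since $s_n \to \infty$, ensuring $\mathbf{s}$-linearity. Everything else is a routine unwinding of definitions: the main content is simply that coarse connectedness lets one reach far-away points through controlled steps, so rescaling by $s_n$ does not collapse the far-away points to the base point in the limit.
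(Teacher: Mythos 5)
Your proof is correct and follows essentially the same route as the paper: pick points at distance roughly $\ell s_n$ from the base point, use coarse connectedness to ensure the distance is at most $\ell s_n + c$ (your discrete intermediate value argument along a $c$-path is exactly the justification the paper leaves implicit), and conclude that the resulting cone point is at distance about $\ell$ from $(e)^{\omega}$ for arbitrary $\ell$. Your remark that homogeneity cannot be invoked for a general metric space, and that one must instead run the construction for every radius $R$, is precisely how the paper handles it.
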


\begin{proof}
Let $e \in X$ be a base point, $\mathbf{s}$ a scaling sequence and $\omega$ a non-principal ultrafilter. We prove the stronger statement that for every $\ell > 0$, there exists a point in $\mathrm{Cone}^{\omega}(X,d,\mathbf{s})$ at distance exactly $\ell$ from the point $(e)^{\omega}$.

Since $(X,d)$ is unbounded, for every $n \geq 1$ there is a point $x_n \in X$ at distance at least $\ell s_n$ from the base point $e$. Now by coarse connectedness, $x_n$ can be chosen to be at distance at most $\ell s_n + c$ from $e$, where $c > 0$ is the constant from the definition of coarse connectedness. By construction, the sequence $(x_n)$ defines a point $(x_n)^{\omega} \in \mathrm{Cone}^{\omega}(X,d,\mathbf{s})$ that is at distance $\ell$ to the point $(e)^{\omega}$. 
\end{proof}

\begin{lem} \label{lem-comp-fixpt}
Let $G$ be a compactly generated locally compact group, and $H$ a closed compactly generated subgroup of $G$. Then for any asymptotic cone of $G$, the following statements are equivalent:
\begin{enumerate}[label=(\roman*)]
\item $H$ is compact;
\item $\pc_G(H,\mathbf{s})$ fixes the point $(e)^{\omega} \in \cone$; 
\item $\pc_G(H,\mathbf{s})$ has a bounded orbit in $\cone$.
\end{enumerate}
\end{lem}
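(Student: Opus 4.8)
\emph{Plan.} I will establish the cycle $(i)\Rightarrow(ii)\Rightarrow(iii)\Rightarrow(i)$, using throughout that word metrics from different compact generating sets are bi-Lipschitz equivalent, so that the notion of a bounded subset of $G$ and the set of $\mathbf{s}$-linear sequences are intrinsic. Two elementary facts will do most of the bookkeeping: first, any compact subset $C\subseteq G$ is bounded for the word metric, since $S\cup C$ is again a compact generating set in which every element of $C$ has length at most one; second, closed balls for a word metric are compact (they are continuous images of finite products of a compact symmetric generating set), so a closed bounded subgroup is compact. Granting these, $(i)\Rightarrow(ii)$ is immediate: if $H$ is compact there is a constant $C$ with $\left|h\right|_S\leq C$ for all $h\in H$, hence any $(h_n)\in\pc_G(H,\mathbf{s})$ satisfies $d_{\omega}((h_n)^{\omega},(e)^{\omega})=\mathrm{lim}^{\omega}\,\left|h_n\right|_S/s_n=0$, so $\pc_G(H,\mathbf{s})$ acts trivially at $(e)^{\omega}$ and in particular fixes it. The implication $(ii)\Rightarrow(iii)$ is trivial, a fixed point being a bounded orbit.

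The substantive step is $(iii)\Rightarrow(i)$, which I will prove in contrapositive form: assuming $H$ non-compact, I produce unbounded orbits of $\pc_G(H,\mathbf{s})$ on $\cone$. Since $H$ is closed, the second fact above shows $H$ is unbounded in $d_S$. The point where compact generation of $H$ is used is to promote this to a form of coarse connectedness \emph{inside} $G$: fixing a compact generating set $T$ of $H$, every element of $T$ has $d_S$-length at most some constant $c$, so for any $h\in H$ written as $h=t_1\cdots t_k$ with $t_i\in T$, the partial products $e=h_0,h_1,\dots,h_k=h$ all lie in $H$ and satisfy $\left|h_{j+1}\right|_S\leq\left|h_j\right|_S+c$. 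Thus for every $m$ with $0\leq m\leq\left|h\right|_S$ some partial product has $d_S$-length in $[m,m+c]$.

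Now fix $\ell>0$. For each $n$, unboundedness of $H$ lets me choose $h\in H$ with $\left|h\right|_S\geq\ell s_n$, and the previous observation then yields $h^{(n)}\in H$ with $\ell s_n\leq\left|h^{(n)}\right|_S\leq\ell s_n+c$. Since $s_n\to\infty$ the sequence $(h^{(n)})$ is $\mathbf{s}$-linear, hence defines an element of $\pc_G(H,\mathbf{s})$, and because $c/s_n\to 0$ it sends $(e)^{\omega}$ to a point at distance exactly $\ell$ from $(e)^{\omega}$. As $\ell$ is arbitrary, the orbit of $(e)^{\omega}$ under $\pc_G(H,\mathbf{s})$ is unbounded. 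To conclude that $(iii)$ fails, I invoke the elementary remark that an isometric group action possessing one bounded orbit has all orbits bounded (if $\Gamma x_0$ has diameter $\leq D$, then $d(\gamma y,y)\leq d(\gamma y,\gamma x_0)+d(\gamma x_0,x_0)+d(x_0,y)\leq 2d(x_0,y)+D$); hence an unbounded orbit of $(e)^{\omega}$ forces every orbit to be unbounded, contradicting $(iii)$.

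The main obstacle is precisely this $(iii)\Rightarrow(i)$ argument: one must manufacture, at each scale $s_n$, an element of $H$ of \emph{prescribed} word length up to a bounded error, and the only tool available for that is compact generation of $H$, which produces the controlled chains inside $H$ above. A secondary point to keep in mind is that $\cone$ is not assumed to be a real tree, so the passage from ``$(e)^{\omega}$ has an unbounded orbit'' to ``there is no bounded orbit at all'' must go through the general isometry remark rather than through a circumcenter (or parabolic-fixed-point) argument available only in trees.
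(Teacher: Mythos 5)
Your proof is correct and follows essentially the same route as the paper: the contrapositive of $(iii)\Rightarrow(i)$ via coarse connectedness of $H$ in the induced metric, producing points of the $\pc_G(H,\mathbf{s})$-orbit of $(e)^{\omega}$ at every prescribed distance $\ell$, and then the remark that an isometric action with one unbounded orbit has all orbits unbounded. The only difference is that you prove inline (via partial products of a word in a compact generating set of $H$, and the explicit $[\ell s_n,\ell s_n+c]$ selection) what the paper delegates to a citation for coarse connectedness of $(H,d_G)$ and to its Lemma on unboundedness of asymptotic cones of coarsely connected unbounded spaces.
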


\begin{proof}
The implications $i) \Rightarrow ii) \Rightarrow iii)$ are trivial. Let us prove $iii) \Rightarrow i)$ by proving the contrapositive statement. 

Since $H$ is a closed compactly generated subgroup of $G$, the metric space $(H,d_G)$ is coarsely connected \cite[Proposition 4.B.8]{Cor-dlH}. So if $H$ is assumed not to be compact, it follows from Lemma \ref{lem-cone-unb} that none of the asymptotic cones of $(H,d_G)$ are bounded. But the asymptotic cone of $H$ with the induced metric from $G$ can be naturally identified with the orbit under $\mathrm{Precone}_G(H,\mathbf{s})$ of the point $(e)^{\omega} \in \cone$. So it follows that $\mathrm{Precone}_G(H,\mathbf{s})$ has one unbounded orbit, and since the action is isometric, every orbit must be unbounded.
\end{proof}

\begin{rmq} \label{rmq-failure}
We illustrate the failure of Lemma \ref{lem-comp-fixpt} when $H$ is not compactly generated. Let $G = \mathbb{F}_p( \! (t) \! ) \rtimes_t \mathbb{Z}$, where $\mathbb{F}_p( \! (t) \! )$ is the field of Laurent series over some finite field $\mathbb{F}_p$, and let $H$ be the subgroup generated by $(t^{- \alpha_n}, 0)$, $n\geq1$, where $\alpha_n = 2^{2^n}$. Then for any scaling sequence $\mathbf{s}$ such that \mbox{$\alpha_n <\!< s_n <\!< \alpha_{n+1}$} (take for example $s_n = 2^{3 \cdot 2^{n-1}}$) and for any non-principal ultrafilter $\omega$, the group $\pc_G(H,\mathbf{s})$ fixes the point $(e)^{\omega} \in \mathrm{Cone}^{\omega}(G,\mathbf{s})$, whereas $H$ is clearly not compact.
\end{rmq}

\begin{lem} 
Let $G$ be a compactly generated locally compact group, and let $N$ be a closed normal subgroup of $G$. Assume that $N$ is not cocompact in $G$. Then for every asymptotic cone $\cone$, there exists a bi-Lipschitz ray $\gamma: [0, +\infty[ \rightarrow \cone$ such that for every $t \geq 0$, \[ d_{\omega} \left( \gamma(t), \mathcal{C}_N \right) \geq c t \] for some constant $c > 0$, where $\mathcal{C}_N$ is the orbit of the point $(e)^{\omega}$ under $\pc_G(N,\mathbf{s})$.
\end{lem}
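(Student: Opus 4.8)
The statement asserts that if $N \triangleleft G$ is closed and not cocompact, then every asymptotic cone $\cone$ contains a bi-Lipschitz ray escaping linearly from the orbit $\mathcal{C}_N$ of the basepoint under $\pc_G(N,\mathbf{s})$. The plan is to work directly in the group $G$ with the word metric $d_S$, and construct the ray as the image of a sequence of paths in $G$ that move ``away from $N$'' at a definite speed, where distance from $N$ is measured by the word metric $d_S$ restricted to $G$.

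First I would pass to the quotient $Q = G/N$ with its quotient word metric. Since $N$ is not cocompact, $Q$ is non-compact; as $Q$ is compactly generated (being a quotient of $G$), the metric space $(Q, d_{S})$ — with $S$ the image of a compact generating set — is coarsely connected and unbounded, hence by Lemma \ref{lem-cone-unb} its asymptotic cone $\mathrm{Cone}^{\omega}(Q,\mathbf{s})$ is unbounded, and more precisely by the proof of that lemma, for each $\ell > 0$ there is a point at distance exactly $\ell$ from $(e)^{\omega}$. The key geometric fact is that the projection $G \to Q$ is a quotient of metric spaces in which, up to a uniform additive constant, $d_G(g, N) = d_Q(\bar g, \bar e)$: indeed $d_G(g, N) = \inf_{h \in N} d_S(g,h)$, and one checks $\le$ and $\ge$ up to the diameter of the generating set against the quotient metric. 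Similarly $d_\omega\big((g_n)^\omega, \mathcal{C}_N\big)$ equals (up to the scaling, which kills the additive error) $d_\omega\big((\bar g_n)^\omega, (\bar e)^\omega\big)$ in $\mathrm{Cone}^{\omega}(Q,\mathbf{s})$.

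Second, I would build the ray in $Q$ first and then lift. For each $n$, since $Q$ is unbounded and coarsely connected, I can pick a discrete path $\bar e = q_{n,0}, q_{n,1}, \dots, q_{n, m_n} $ in $Q$ with consecutive points at distance $\le c_0$ (the coarse-connectedness constant) and with $d_Q(\bar e, q_{n,i}) \ge i c_0 / 2$ for all $i$ — one gets such a ``taut'' path by choosing a point far from $\bar e$ and taking a quasi-geodesic to it, or more simply by extracting from an arbitrary long path a subsequence on which the distance to $\bar e$ grows. Lift each $q_{n,i}$ to $g_{n,i} \in G$ with $d_S(g_{n,i}, g_{n,i+1}) \le c_0 + \mathrm{const}$. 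Then define $\gamma$ on $[0,\infty)$ by declaring $\gamma(t)$, for $t$ in the appropriate subinterval, to be the class $(g_{n, \lfloor t s_n / c_0 \rfloor})^\omega$, interpolating along the discrete path; the uniform bound on consecutive distances makes $\gamma$ Lipschitz, and the lower bound $d_Q(\bar e, q_{n,i}) \ge ic_0/2$ translates (via the quotient-metric identity above, dividing by $s_n$ and taking $\lim^\omega$) into $d_\omega(\gamma(t), \mathcal{C}_N) \ge c t$ with $c = 1/2$. The lower bound also forces $\gamma$ to be bi-Lipschitz: two points $\gamma(t), \gamma(t')$ with $t < t'$ satisfy $d_\omega(\gamma(t),\gamma(t')) \ge d_\omega(\gamma(t'), \mathcal{C}_N) - d_\omega(\gamma(t), \mathcal{C}_N) $ only if we also control $\gamma(t)$'s distance from $\mathcal C_N$ from above, which we do since $\gamma(t)$ was built from a path of controlled length — alternatively, arrange the path in $Q$ to be a genuine quasi-geodesic so that $d_Q(q_{n,i}, q_{n,j}) \asymp |i-j|$, and then bi-Lipschitz-ness is immediate.

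The main obstacle I anticipate is the construction of the ``taut'' or quasi-geodesic escaping paths in $Q$ uniformly in $n$: a general coarsely connected unbounded metric space need not be quasi-geodesic, so one cannot simply quote the existence of quasi-geodesic rays. The clean fix is to not insist on quasi-geodesics but to only extract, from arbitrarily long coarsely-connected paths, the monotone-escape property $d_Q(\bar e, q_{n,i}) \ge i \cdot (\text{const})$, which suffices for the linear-divergence conclusion; bi-Lipschitz-ness of $\gamma$ then follows because $\gamma(t)$ lies within a controlled distance (proportional to $t$) of $\mathcal C_N$ from above, and at distance $\ge ct$ from below, so restricting to where these match up to a multiplicative constant — or reparametrising — yields a bi-Lipschitz embedding. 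I would also double-check the quotient-metric comparison $|d_G(g,N) - d_Q(\bar g,\bar e)| \le \mathrm{const}$ carefully, as it is the one place where the additive constants must be shown to wash out under scaling; this is routine but load-bearing.
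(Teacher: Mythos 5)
Your proposal is correct and follows essentially the same route as the paper: pass to the non-compact quotient $G/N$, identify $d_G(g,N)$ with the word length of $\bar g$ in $G/N$, lift geodesics escaping to infinity back to $G$, and take the $\omega$-limit to get the desired ray. The only (harmless) difference is that you take ultralimits of finite geodesic segments rather than lifting a single infinite quasi-geodesic ray as the paper does; note also that your worry about quasi-geodesics is unfounded here, since the word metric on the compactly generated quotient admits genuine discrete geodesics, which also settles the bi-Lipschitz issue exactly as you indicate.
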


\begin{proof}
Since the group $G/N$ is non-compact, it has an infinite quasi-geodesic ray, that can be lifted to a quasi-geodesic ray $\rho : [0, +\infty[ \rightarrow G$ such that for every $t \geq 0$, $d_G(\rho(t),N) \geq c t$ for some constant $c$. Now we easily check that for every non-principal ultrafilter $\omega$ and scaling sequence $\mathbf{s}$, the $\omega$-limit of the quasi-geodesic ray $\rho$ in $\cone$ is a bi-Lipschitz ray satisfying the required property.
\end{proof}

\begin{cor} \label{lem-cocomp-cone}
Let $G$ be a compactly generated locally compact group, and let $N$ be a closed normal subgroup of $G$. If for some parameters $\omega, \mathbf{s}$ the action of $\pc_G(N,\mathbf{s})$ on $\cone$ is cobounded, then $N$ is cocompact in $G$.
\end{cor}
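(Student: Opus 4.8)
The plan is to derive Corollary \ref{lem-cocomp-cone} as an immediate consequence of the lemma that directly precedes it, namely the statement producing a bi-Lipschitz ray $\gamma$ escaping the orbit $\mathcal{C}_N$ at linear rate whenever $N$ fails to be cocompact. So the argument is a contrapositive: I would assume that $N$ is \emph{not} cocompact in $G$ and show that then the action of $\mathrm{Precone}_G(N,\mathbf{s})$ on $\cone$ cannot be cobounded, for any choice of parameters $\omega,\mathbf{s}$.

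First I would invoke the preceding lemma with this non-cocompact $N$: it gives, for the chosen $\omega$ and $\mathbf{s}$, a bi-Lipschitz ray $\gamma:[0,+\infty[\rightarrow\cone$ and a constant $c>0$ with $d_\omega(\gamma(t),\mathcal{C}_N)\geq ct$ for all $t\geq 0$, where $\mathcal{C}_N$ is the $\mathrm{Precone}_G(N,\mathbf{s})$-orbit of the basepoint $(e)^\omega$. Since $\gamma$ is a bi-Lipschitz ray, the points $\gamma(t)$ leave every bounded neighbourhood of $\mathcal{C}_N$ as $t\to\infty$; in particular $\mathcal{C}_N$ is not coarsely dense in $\cone$, i.e.\ the orbit of $(e)^\omega$ under $\mathrm{Precone}_G(N,\mathbf{s})$ is not cobounded. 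Because the $\mathrm{Precone}_G(N,\mathbf{s})$-action on $\cone$ is by isometries and $\cone$ is homogeneous under $\mathrm{Precone}(G,\mathbf{s})$, if one orbit is not cobounded then no orbit is cobounded — so the action is not cobounded, contradicting the hypothesis. Hence $N$ must be cocompact.

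I do not expect any serious obstacle here, since the work has all been done in the preceding lemma; the only point requiring a word of care is the passage from "one orbit is not coarsely dense" to "the action is not cobounded", which is just the observation that coboundedness of an isometric action is independent of the chosen orbit. I would simply note that if some orbit were cobounded with constant $D$, then every point of $\cone$, and in particular every point $\gamma(t)$, would lie within $D$ of $\mathcal{C}_N$ (after translating by an element moving the cobounded orbit's basepoint close to $(e)^\omega$, or more directly: coboundedness of any orbit forces coboundedness of $\mathcal{C}_N$), contradicting $d_\omega(\gamma(t),\mathcal{C}_N)\geq ct\to\infty$. This completes the proof.
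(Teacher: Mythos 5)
Your proof is correct and is precisely the argument the paper intends: the corollary is stated without a separate proof because it is the immediate contrapositive of the preceding lemma, exactly as you carry out. The one point you flag — that coboundedness of the isometric action forces the particular orbit $\mathcal{C}_N$ of $(e)^{\omega}$ to be coarsely dense, which the linearly escaping ray $\gamma$ contradicts — is handled correctly.
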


%Let $\Gamma$ be a group acting on a set $X$, and $\Lambda$ a normal subgroup of $\Gamma$. If $Y$ is a $\Lambda$-invariant subset of $X$, then $\gamma \cdot Y$ is also $\Lambda$-invariant for every $\gamma \in \Gamma$. In particular if $\Lambda$ admits a unique invariant subset $Y$ of a given cardinality $k$, then $Y$ is $\Gamma$-invariant. 

When $G$ is a compactly generated group with an asymptotic cone $\cone$ that is a real tree and such that the action of $\pc(G,\mathbf{s})$ is of general type, the five types of actions on real trees may happen for the action of $\pc_G(H,\mathbf{s})$ on $\cone$, where $H$ is a subgroup of $G$. However the situation is more restrictive under the additional assumption that $H$ is a normal subgroup.

\begin{lem} \label{lem-normal}
Let $G$ be a locally compact compactly generated group. Assume that $G$ admits an asymptotic cone $\cone$ that is a real tree and such that the action of $\pc(G,\mathbf{s})$ is of general type. Then for any normal subgroup $N$ of $G$, the action of $\pc_G(N,\mathbf{s})$ on $\cone$ is either bounded or of general type.
\end{lem}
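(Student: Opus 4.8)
The plan is to exploit the fact that $\pc_G(N,\mathbf{s})$ is a normal subgroup of $\pc(G,\mathbf{s})$, together with the classification of isometric actions on real trees recalled in Figure \ref{types}. Write $\Gamma = \pc(G,\mathbf{s})$ and $\Lambda = \pc_G(N,\mathbf{s}) \triangleleft \Gamma$. The action of $\Gamma$ on $T = \cone$ is of general type by hypothesis. I want to rule out three of the five possible types for the $\Lambda$-action: horocyclic, lineal, and focal; what remains is bounded or general type, which is the assertion.

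First I would dispose of the horocyclic case. If the $\Lambda$-action were horocyclic, then $\Lambda$ would have unbounded orbits but contain no hyperbolic element, so $\Lambda$ fixes a point of $\partial T$; moreover this fixed end is unique (a horocyclic action fixes a unique end, since two fixed ends would force an invariant line on which the action is lineal, contradicting the absence of hyperbolic elements — or one invokes the standard fact). Then the fixed end $\xi \in \partial T$ is canonically attached to $\Lambda$, hence $\Gamma$-invariant by normality of $\Lambda$: for $g \in \Gamma$, $g\Lambda g^{-1} = \Lambda$ fixes $g\xi$, so $g\xi = \xi$. Thus $\Gamma$ fixes a point of $\partial T$, which is impossible for a general type action. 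The lineal and focal cases are handled simultaneously: if $\Lambda$ is lineal or focal, then $\Lambda$ contains a hyperbolic element and any two hyperbolic elements of $\Lambda$ share an endpoint; one then shows the set $E$ of endpoints of hyperbolic elements of $\Lambda$ is finite (it has one element in the focal case — the common fixed end — and at most two in the lineal case). This finite set $E \subset \partial T$ is again canonical, hence $\Gamma$-invariant: for $g \in \Gamma$ and $\varphi \in \Lambda$ hyperbolic, $g\varphi g^{-1} \in \Lambda$ is hyperbolic with endpoints $g \cdot (\text{endpoints of }\varphi)$, so $g$ permutes $E$. But a general type action cannot stabilize a finite subset of the boundary (by the classification; this is also used implicitly in Lemma \ref{lem-cone-sametype}). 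Contradiction in every case.

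Concretely the proof reads as follows. Suppose the $\Lambda$-action is not bounded. By Gromov's classification it is horocyclic, lineal, focal, or of general type; I claim the first three are impossible. In the horocyclic case $\Lambda$ fixes a unique end $\xi \in \partial T$, and by normality every $g \in \Gamma$ satisfies $g\xi = \xi$ (since $g \Lambda g^{-1} = \Lambda$ fixes the end $g\xi$, which must then equal the unique $\Lambda$-fixed end $\xi$); so $\Gamma$ fixes $\xi$, contradicting that $\Gamma$ is of general type. In the lineal or focal case, the set $E$ of endpoints of hyperbolic elements of $\Lambda$ is a non-empty finite subset of $\partial T$ of cardinality at most two, and for $g \in \Gamma$ and hyperbolic $\varphi \in \Lambda$ the element $g\varphi g^{-1} \in \Lambda$ is hyperbolic with endpoint set $gE_\varphi \subseteq E$; running over $\varphi$ shows $gE = E$, so $\Gamma$ stabilizes the finite set $E \subset \partial T$, again contradicting general type. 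Hence the $\Lambda$-action is of general type, completing the proof.

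The main obstacle, such as it is, is purely bookkeeping: making sure that in each excluded case the relevant boundary data (the unique fixed end in the horocyclic case; the one or two shared endpoints in the focal/lineal case) is genuinely \emph{canonical} so that normality of $\Lambda$ forces $\Gamma$-invariance, and then correctly quoting from the classification that a general type action stabilizes no non-empty finite subset of the boundary. Both facts are standard (the latter is exactly the dichotomy encoded in Figure \ref{types} and is the same principle already invoked in the proof of Lemma \ref{lem-cone-sametype}), so the argument is short; no estimates at the level of the tree metric are needed, only the structure of the boundary action.
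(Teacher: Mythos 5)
Your proof is correct in substance and is exactly the paper's argument: in each excluded case $\pc_G(N,\mathbf{s})$ preserves a canonical non-empty finite subset of the boundary of $\cone$ (a unique fixed end, or the pair of ends of the invariant line), which by normality is also preserved by $\pc(G,\mathbf{s})$, contradicting general type. One slip to correct: in the focal case the set of \emph{all} endpoints of hyperbolic elements of $\Lambda$ is infinite, not a singleton (every hyperbolic element contributes a second, varying endpoint besides the common one), so the canonical finite set you want there is the singleton consisting of the unique $\Lambda$-fixed boundary point, exactly as in your horocyclic case; with that substitution the argument goes through verbatim.
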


\begin{proof}
If the group $\pc_G(N, \mathbf{s})$ preserves a finite subset in the boundary of $\cone$, then this finite subset is also preserved by $\pc(G,\mathbf{s})$ because $\pc_G(N, \mathbf{s})$ is normal in $\pc(G,\mathbf{s})$. By assumption this does not happen, so it follows that the action of $\pc_G(N, \mathbf{s})$ on $\cone$ is either bounded or of general type.
\end{proof}

We point out that it may happen that the group $\pc_G(N, \mathbf{s})$ fixes the point $(e)^{\omega} \in \cone$ even if $N$ is non-compact. Indeed, if $G$ is a non-virtually cyclic finitely generated lacunary hyperbolic group with an infinite center $Z$ (such groups have been constructed in \cite{OOS}), then the action of the abelian group $\pc_G(Z, \mathbf{s})$ cannot be of general type, and therefore must have a fixed point. 

%\begin{proof}
%Just apply Lemma \ref{lem-gpaction} with $X$ the boundary of $\cone$, $\Gamma = \pc(G,\mathbf{s})$ and $\Lambda = \pc_G(H,\mathbf{s})$.
%\end{proof}

Let $(X,d)$ be a non-empty metric space, and let $x_0 \in X$. Recall that an isometry $\varphi$ of $X$ is hyperbolic if the limit as \mbox{$n \rightarrow \infty$} of $d(\varphi^nx_0,x_0)/n$ is positive. If $G \leq \mathrm{Isom}(X)$ is a subgroup of the isometry group of $X$, we can endow $G$ with the pseudo-metric $d_{x_0}(g,h) = d(gx_0,hx_0)$. Note that for every scaling sequence $\mathbf{s}$ and non-principal ultrafilter $\omega$, the group $\pc(G,d_{x_0},\mathbf{s})$ admits a natural action on the asymptotic cone $\mathrm{Cone}^{\omega}(X,d,\mathbf{s})$. 

%\begin{lem} \label{lem-hyp-isom}
%If $G \leq \mathrm{Isom}(X)$ acts on $X$ with a hyperbolic isometry, then for any scaling sequence $\mathbf{s}$ and non-principal ultrafilter $\omega$, the group $\pc(G,d_{x_0},\mathbf{s})$ acts on $\mathrm{Cone}^{\omega}(X,d,\mathbf{s})$ with a hyperbolic isometry.
%\end{lem}

%\begin{proof}
%By assumption there exist $g \in G$ and $c > 0$ such that $d(g^n x_0,x_0) \geq c n$ for every $n \geq 1$. If we let $\mathbf{g} = (g^{s_n}) \in \pc(G,d_{x_0},\mathbf{s})$, then for every $k \geq 1$, \[ d_{\omega}(\mathbf{g}^k (x_0)^{\omega},(x_0)^{\omega}) = \mathrm{lim}^{\omega} \, \frac{d(g^{ks_n}x_0,x_0)}{s_n} \geq \mathrm{lim}^{\omega} \, \frac{c k s_n}{s_n} = c k, \] and a fortiori the limit as $k$ goes to infinity of $d_{\omega}(\mathbf{g}^k (x_0)^{\omega},(x_0)^{\omega}) / k$ is at least $c > 0$.
%\end{proof}

The following lemma says that if $X$ is a geodesic hyperbolic metric space and $G \leq \mathrm{Isom}(X)$, in many cases the type of the action of $\pc(G,d_{x_0},\mathbf{s})$ on $\mathrm{Cone}^{\omega}(X,d,\mathbf{s})$ is the same as the type of the action of $G$ on $X$. Note that both situations of statement (b) may happen (see Remark \ref{rmq-failure}).

\begin{lem} \label{lem-sametype-cone}
Let $X$ be a geodesic hyperbolic metric space, and $x_0 \in X$. If $G$ is a subgroup of the isometry group of $X$, then:
\begin{enumerate}[label=(\alph*)]
\item if the action of $G$ on $X$ is either bounded, lineal, focal or of general type, then for every asymptotic cone of $X$, the action of $\pc(G,d_{x_0},\mathbf{s})$ on $\mathrm{Cone}^{\omega}(X,d,\mathbf{s})$ has the same type;
\item if the action of $G$ on $X$ is horocyclic, then the action of $\pc(G,d_{x_0},\mathbf{s})$ on $\mathrm{Cone}^{\omega}(X,d,\mathbf{s})$ is either bounded or horocylic. %, and there exists a scaling sequence $\mathbf{s}$ such that this action is horocyclic for every non-principal ultrafilter $\omega$.
\end{enumerate} 
\end{lem}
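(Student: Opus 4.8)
The plan is to transfer the dynamical type of the $G$-action on $X$ to the $\pc(G,d_{x_0},\mathbf{s})$-action on the asymptotic cone by tracking how individual isometries behave under the cone construction. The key observation is that the word-type pseudo-metric $d_{x_0}$ on $G$ is exactly the restriction to $G\cdot x_0$ of the metric on $X$, so $\mathrm{Cone}^{\omega}(X,d,\mathbf{s})$ contains the orbit cone $\mathrm{Cone}^{\omega}(G\cdot x_0,d_{x_0},\mathbf{s})$ on which $\pc(G,d_{x_0},\mathbf{s})$ acts, and this orbit cone is $\pc(G,d_{x_0},\mathbf{s})$-invariant and equal to the orbit of $(x_0)^\omega$. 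First I would record the elementary fact that for a constant sequence $\varphi\in\pc(G,d_{x_0},\mathbf{s})$ (i.e.\ the image of a single isometry), the translation length in the cone satisfies $\|\varphi\|_{\mathrm{Cone}} = l(\varphi)$, the stable translation length in $X$; this is immediate from $d(\varphi^n x_0,x_0)/n\to l(\varphi)$ and the definition of the cone metric, since $d_\omega((\varphi^n x_0)^\omega_n, (x_0)^\omega)$ computed along the diagonal gives $\lim^\omega d(\varphi^{?}x_0,x_0)/s_n$ and one uses that $l$ is the asymptotic slope. In particular $\varphi$ is hyperbolic on $X$ iff it acts as a hyperbolic isometry of the cone, and the two fixed endpoints in $\partial X$ of a hyperbolic $\varphi$ give rise to the two ends of its axis in the cone. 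I would make this correspondence between endpoints precise: a Cauchy--Gromov sequence along an orbit of $\varphi$ determines, after rescaling, one of the two ends of $\mathrm{Min}_\varphi$ in the cone, and two hyperbolic isometries share an endpoint of $\partial X$ iff the corresponding axes in the cone share an end (one direction is clear from the quasi-geodesic picture; the other uses hyperbolicity of $X$ and the fact that bounded-distance quasi-geodesic rays converge to the same point of $\partial X$).

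Given these two facts the argument for (a) is a case check. If the $G$-action on $X$ is bounded, then every orbit of $G$ is bounded, so the orbit of $(x_0)^\omega$ under $\pc(G,d_{x_0},\mathbf{s})$ has diameter $0$ (a bounded set scaled by $s_n\to\infty$ collapses), hence the cone action is bounded. If it is of general type, pick two hyperbolic $g,h\in G$ with no common endpoint in $\partial X$; by the endpoint correspondence their images in $\pc(G,d_{x_0},\mathbf{s})$ are hyperbolic cone-isometries whose axes share no end, so the cone action is of general type. If it is focal, there is a unique fixed point $\xi\in\partial X$ common to all hyperbolic elements and at least two hyperbolic elements $g,h$ whose other endpoints differ; the images are hyperbolic on the cone, their axes share the end corresponding to $\xi$, and are not all equal, which is precisely a focal action of the cone (using that a minimal-type dichotomy on trees rules out the action being lineal here). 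The lineal case is handled the same way: all hyperbolic elements share both endpoints, so all axes in the cone coincide, giving a lineal cone action; one should note an orbit may a priori not be contained in that line, but the invariant line is preserved and that is what defines lineality. For (b), if the $G$-action is horocyclic then $G$ contains no hyperbolic element, so by the translation-length identity $\pc(G,d_{x_0},\mathbf{s})$ contains no constant-sequence hyperbolic cone-isometry; but one must still allow for non-constant sequences $(g_n)$ producing hyperbolic cone-isometries. Here I would invoke that a horocyclic action on a hyperbolic space fixes a unique point $\eta\in\partial X$ and that this fixed point passes to a fixed end of the cone under the construction (the orbit of $(x_0)^\omega$ lies in a ``horoball-like'' region), so $\pc(G,d_{x_0},\mathbf{s})$ fixes an end and hence cannot be focal or of general type; combined with the absence of an invariant line in general (which would force the action to be lineal, contradicting horocyclicity of $G$ via the endpoint correspondence again), the only possibilities left are bounded or horocyclic.

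The main obstacle I anticipate is exactly the appearance of non-constant sequences in $\pc(G,d_{x_0},\mathbf{s})$: the dynamical type of the cone action is not literally determined by the images of single isometries, and one has to argue that a sequence $(g_n)$ with $|g_n|_{x_0}=O(s_n)$ cannot manufacture, for example, two hyperbolic cone-isometries without common endpoint when the original action was focal or horocyclic. The way around this is the normality/invariance trick already used in the excerpt (Lemma~\ref{lem-normal} and Lemma \ref{lem-cone-sametype}): the fixed point or fixed pair in $\partial X$ of the $G$-action is $G$-invariant, and its image in $\partial(\mathrm{Cone}^\omega(X,d,\mathbf{s}))$ is invariant under all of $\pc(G,d_{x_0},\mathbf{s})$ (not merely under constant sequences), because the cone boundary point attached to $\eta\in\partial X$ is built functorially from $\eta$ and the $\pc$-action on the cone is induced by the $G$-action on $X$. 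So the existence of a global fixed end or fixed pair persists, which immediately kills the ``general type'' and (for a single fixed end) the ``lineal'' alternatives, and then the remaining determination between the surviving types is made using the constant-sequence translation-length computation to detect whether hyperbolic isometries are present. I would also isolate, as a preliminary sublemma, the statement that a point $\eta\in\partial X$ fixed by a subgroup $G\le\mathrm{Isom}(X)$ induces an end of $\mathrm{Cone}^\omega(X,d,\mathbf{s})$ fixed by $\pc(G,d_{x_0},\mathbf{s})$ — this is the one technical point that requires genuine care with the Gromov-product estimates and $\omega$-limits, and everything else is bookkeeping over the five cases.
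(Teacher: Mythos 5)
Your overall architecture is right (detect hyperbolic elements coming from $G$, and show that a fixed end at infinity persists in the cone), but two steps fail as written. First, the ``elementary fact'' you start from is false: for a \emph{constant} sequence $\varphi=(\,g,g,g,\dots)$ one has $d_{\omega}(\varphi\cdot(x_0)^{\omega},(x_0)^{\omega})=\lim^{\omega}d(gx_0,x_0)/s_n=0$, and likewise every fixed power of $\varphi$ displaces $(x_0)^{\omega}$ by $0$, so a constant sequence has translation length $0$ on the cone and is elliptic --- never hyperbolic. The element that is actually hyperbolic on $\mathrm{Cone}^{\omega}(X,d,\mathbf{s})$, with translation length $l(g)$ and axis the limit of a geodesic between the two endpoints of $g$, is the rescaled power $(g^{\lfloor s_n\rfloor})$; your ``$\varphi^{?}$'' shows you sensed the issue, but the case analysis in (a) repeatedly invokes ``the images'' of single isometries as hyperbolic cone-isometries, and with constant sequences none of those steps go through. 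This is repairable by systematically replacing $g$ by $(g^{\lfloor s_n\rfloor})$, which is exactly what the paper does; with that fix, your treatments of the bounded, lineal and general-type cases are fine, and your focal case reduces to the sublemma you correctly isolate (persistence of the fixed end under all of $\pc(G,d_{x_0},\mathbf{s})$), which you do not prove but which the paper obtains from the standard estimate $d(g\gamma^k x_0,\gamma^k x_0)\leq c\, d(gx_0,x_0)$ for $g$ fixing the boundary point $\lim\gamma^k x_0$.

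The genuine gap is case (b). Your argument there is ``the fixed point $\eta\in\partial X$ gives a fixed end of the cone, hence the action cannot be focal or of general type.'' But a focal action \emph{does} fix an end --- that is its definition --- so fixing an end only excludes general type; it excludes neither focal nor lineal. What must actually be shown is that $\pc(G,d_{x_0},\mathbf{s})$ contains no hyperbolic isometry of the cone at all, and for non-constant sequences $(g_n)$ nothing in your proposal addresses this: the parenthetical ``absence of an invariant line \dots via the endpoint correspondence'' appeals to a correspondence you only established for single isometries. The paper closes this with an ingredient you are missing: for a horocyclic action, the intersection of any $G$-orbit with any quasi-geodesic has uniformly bounded diameter, whence the midpoint of the cone-geodesic between two orbit points at distance $\ell$ lies at distance at least $\ell/3$ from the whole $\pc(G,d_{x_0},\mathbf{s})$-orbit of $(x_0)^{\omega}$; this rules out an invariant line and any hyperbolic element, leaving only the bounded and horocyclic types. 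Without some such quantitative input about horocyclic actions, part (b) does not follow from fixed-end considerations alone.
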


\begin{proof}
We start by making the easy observation that if $g \in G$ is a hyperbolic element, then for every asymptotic cone $\mathrm{Cone}^{\omega}(X,d,\mathbf{s})$ of $X$, the element $(g^{s_n}) \in \pc(G,d_{x_0},\mathbf{s})$ is a hyperbolic isometry of $\mathrm{Cone}^{\omega}(X,d,\mathbf{s})$, and the axis of $(g^{s_n})$ is the asymptotic cone of any geodesic line in $X$ between the two endpoints of $g$. 

(a). The statement is obvious for bounded and lineal actions, and follows from the previous observation for actions of general type. Let us give the proof in the case when the action of $G$ on $X$ is focal. Let $\gamma \in G$ be a hyperbolic element. Since any two hyperbolic elements of $G$ share an endpoint, upon changing $\gamma$ into its inverse, we may assume that $(\gamma^k x_0) \in \partial X$ is the unique boundary point that is fixed by $G$. This implies (see \cite[Chap.7 Cor.3]{Ghys-dlH} or \cite[Part III.H Lemma 3.3]{BH}) that there exists some constant $c>0$ such that for every $g \in G$, we have $d(g \gamma^k x_0,\gamma^k x_0) \leq c d(g x_0,x_0)$ for every integer $k \geq 1$. It follows that for every element $(g_n) \in \pc(G,d_{x_0},\mathbf{s})$, there exists some constant $C > 0$ such that $d(g_n \gamma^{\left\lfloor ts_n\right\rfloor} x_0,\gamma^{\left\lfloor ts_n\right\rfloor}x_0) \leq C s_n$ for every $t \geq 0$ and $n \geq 1$. This implies that, if we let \mbox{$\xi: [0, + \infty [ \rightarrow \mathrm{Cone}^{\omega}(X,d,\mathbf{s})$} be the ray defined by $\xi(t) = (\gamma^{\left\lfloor ts_n\right\rfloor} x_0 )^{\omega}$, in the real tree $\mathrm{Cone}^{\omega}(X,d,\mathbf{s})$ the distance between $g \cdot \xi(t)$ and $\xi(t)$ is uniformly bounded, which means that the two rays $g \cdot \xi$ and $\xi$ represent the same end of $\mathrm{Cone}^{\omega}(X,d,\mathbf{s})$. Combined with the fact that $\pc(G,d_{x_0},\mathbf{s})$ contains hyperbolic elements not having the same endpoints (because $G$ already does), this implies that the action of $\pc(G,d_{x_0},\mathbf{s})$ on $\mathrm{Cone}^{\omega}(X,d,\mathbf{s})$ is focal.

(b). We assume that the action of $\pc(G,d_{x_0},\mathbf{s})$ on $\mathrm{Cone}^{\omega}(X,d,\mathbf{s})$ is not bounded, and we prove that it is horocylic. Let $(g_n x_0)^{\omega}$ be a point of $\mathrm{Cone}^{\omega}(X,d,\mathbf{s})$ such that $d_{\omega}((x_0)^{\omega},(g_n x_0)^{\omega}) = \ell > 0$, and let $\gamma_n$ be a geodesic in $X$ between $x_0$ and $g_n x_0$. Call $m_n$ the mid-point of $\gamma_n$. Recall that since the action of $G$ on $X$ is horocylic, for every $c > 0$ there exists some constant $c'$ such that the intersection in $X$ between any $c$-quasi-geodesic and any $G$-orbit lies in the union of two $c$-balls. This implies that \mbox{$\omega$-almost} surely the ball or radius $\ell s_n / 3$ around $m_n$ in $X$ does intersect the orbit $G x_0$. Therefore the mid-point of the unique geodesic in $\mathrm{Cone}^{\omega}(X,d,\mathbf{s})$ between $(x_0)^{\omega}$ and $(g_n x_0)^{\omega}$ is at distance at least $\ell / 3$ from any point in the $\pc(G,d_{x_0},\mathbf{s})$-orbit of $(x_0)^{\omega}$. In particular this proves that $\pc(G,d_{x_0},\mathbf{s})$ cannot preserve a geodesic line in $\mathrm{Cone}^{\omega}(X,d,\mathbf{s})$, and therefore does not have any hyperbolic isometry.
%We shall now prove the second part of the statement. Since the action of $G$ on $X$ is unbounded, there exists a sequence of elements $g_n \in G$ such that $d(g_n x_0, x_0)$ is increasing and tends to infinity. Let $s_n = d(g_n x_0, x_0)$, and let $\omega$ be any non-principal ultrafilter. By construction the element $(g_n)$ belongs to $\pc(G,d_{x_0},\mathbf{s})$ and sends the point $(x_0)^{\omega} \in \mathrm{Cone}^{\omega}(X,d,\mathbf{s})$ to a point at distance exactly one. Since any bounded action on a real tree has a fixed point, 
\end{proof}

\section{Focal lacunary hyperbolic groups} \label{sec-focal}

\subsection{Focal lacunary hyperbolic groups are focal groups}

This section is devoted to the proof of Theorem \ref{thm-intro-focal}. We call a locally compact compactly generated group \textit{focal lacunary hyperbolic} if it admits one asymptotic cone $\cone$ that is a real tree, and such that the action of $\pc(G,\mathbf{s})$ on $\cone$ is focal. According to Lemma \ref{lem-sametype-cone}, any focal hyperbolic group is a focal lacunary hyperbolic group. The rest of this section will be devoted to the proof of the following converse implication.

\begin{thm} \label{thm-focal}
Any focal lacunary hyperbolic group admits a topological semidirect product decomposition $H \rtimes \mathbb{Z}$ or $H \rtimes \mathbb{R}$, where the element $1 \in \mathbb{Z}$ or $\mathbb{R}$ acts on $H$ as a compacting automorphism. 
\end{thm}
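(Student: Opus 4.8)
The plan is to exploit the focal dynamics at the level of the real tree $T = \cone$, following the two-step strategy announced in the introduction: first produce a topological semidirect product decomposition $G = H \rtimes \mathbb{Z}$ or $H \rtimes \mathbb{R}$, then show the induced automorphism is compacting.

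\medskip

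\emph{Step 1: the semidirect product decomposition.} Since the action of $P = \pc(G,\mathbf{s})$ on the real tree $T$ is focal, it fixes a unique end $\xi \in \partial T$, and every hyperbolic element of $P$ has $\xi$ as one of its endpoints; moreover $P$ contains hyperbolic elements, and the action is not lineal. Fix a hyperbolic element $h \in P$ whose attracting fixed point is $\xi$, and let $A \subset T$ be its axis, a line from the repelling point $\eta$ to $\xi$. The key geometric input is the Busemann character: associated to the fixed end $\xi$ there is a homomorphism $\beta \colon P \to \mathbb{R}$ measuring the signed displacement along horoballs centred at $\xi$ (the Busemann quasicocycle relative to $\xi$ is a genuine homomorphism because $\xi$ is fixed). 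Its kernel $P_0 = \ker\beta$ consists of the elliptic elements and horocyclic-type elements, and acts with bounded (in fact, since $T$ is a tree, fixing a ray towards $\xi$) orbits on each horosphere. The image $\beta(P)$ is a nontrivial subgroup of $\mathbb{R}$, hence either discrete ($\cong\mathbb{Z}$) or dense. I would then pull this back to $G$: composing $\beta$ with the natural map $G \hookrightarrow P$, $g \mapsto (g)_n$ (constant sequences), gives a continuous homomorphism $\bar\beta \colon G \to \mathbb{R}$ — continuity because the Busemann character is Lipschitz in the word metric and the word metric on $G$ is locally bounded. The crux is to compare $\bar\beta$ with the modular function: I claim $\ker\bar\beta =: H$ is cocompact-free of obstruction, i.e. $H$ is a closed normal subgroup with $G/H$ topologically $\mathbb{Z}$ or $\mathbb{R}$, and that the extension splits. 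Here one uses Lemma \ref{lem-cone-sametype} / Lemma \ref{lem-comp-fixpt}: the subgroup $H$ has the property that $\pc_G(H,\mathbf{s})$ acts on $T$ fixing a ray towards $\xi$ (it lies in $P_0$ after rescaling), and combined with coarse connectedness this forces the geometry; splitting of $G \to G/H \cong \mathbb{Z}$ or $\mathbb{R}$ follows by choosing a lift of a hyperbolic direction (for $\mathbb{Z}$, pick a preimage of a generator; for $\mathbb{R}$, one must produce a one-parameter subgroup, which requires showing $\bar\beta$ has connected-by-compact fibres or invoking structure theory — this is where the modular-function argument of the ``first step'' in the introduction enters).

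\medskip

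\emph{Step 2: the automorphism is compacting.} Write $G = H \rtimes \langle t\rangle$ with $t$ mapping to a hyperbolic direction towards $\xi$, and let $\alpha = \mathrm{conj}_t \in \mathrm{Aut}(H)$. I must find a compact $V \subset H$ with $\alpha^n(h) \in V$ for all large $n$, for every $h \in H$. The idea is that $\alpha$ contracts the word metric on $H$ in the following asymptotic sense: for $h \in H$, the points $(t^{-n} h t^{n})_k^\omega$ in $T$ converge towards the fixed end $\xi$ — indeed $h$, fixing a ray to $\xi$, moves points of that ray a bounded amount, and conjugating by $t^n$ (which translates towards $\xi$ by $n\|t\|$) pushes the ``support'' of this bounded displacement into a horoball centred at $\xi$; the focal property (uniqueness of the fixed point) then forces $d(t^{-n} h t^n \cdot x_0, x_0) \to 0$ at scale $\mathbf{s}$. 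Translating this statement at scale $\mathbf{s}$ back to the group: $|\alpha^n(h)|_S = o(s_k)$ cannot immediately be read off for a single $n$, so one argues by contradiction — if $\{\alpha^n(h) : n \geq 0\}$ were unbounded for some $h$, extract a subsequence $n_k$ with $|\alpha^{n_k}(h)|_S \to \infty$ along $\omega$, and feed it into a cleverly chosen (lacunary) scaling sequence to contradict the focal dynamics, i.e. to produce in the cone either a second fixed end or a hyperbolic element with the wrong endpoint. This is essentially the reverse direction of Lemma \ref{lem-sametype-cone}(a).

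\medskip

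\emph{Main obstacle.} The hard part is the passage from the asymptotic (scale-$\mathbf{s}$, lacunary) statement back to a uniform statement about the actual group $G$ — both the splitting in Step 1 (especially producing an honest $\mathbb{R}$-factor rather than just a quasi-action of $\mathbb{R}$, which forces one to analyse $G^\circ$ and invoke the structure theory of connected locally compact groups from Section~\ref{sec-preli}) and the compactness of $V$ in Step 2. Unlike in \cite{CCMT}, amenability is unavailable, so one cannot cheaply upgrade quasi-characters to characters; instead every such upgrade must be powered by the rigidity of isometries of real trees (unique axes, the dichotomy elliptic/hyperbolic, and the fact that a fixed end gives a genuine Busemann homomorphism, not merely a quasimorphism). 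I expect the technical heart to be a careful ``de-lacunarisation'' lemma: showing that if the focal picture holds at one scale $\mathbf{s}$, a contradiction at the group level can always be transported to the cone by a suitable choice of scaling sequence. Once that is in place, the compacting property and the identification $G/H \cong \mathbb{Z}$ or $\mathbb{R}$ fall out, and Theorem~\ref{thm-ccmt-intro}'s equivalence $(iii)\Leftrightarrow(i)$ recovers that $G$ is a focal hyperbolic group.
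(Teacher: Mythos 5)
Your two-step outline matches the paper's announced strategy, but both of your key mechanisms have genuine gaps, and the actual engine of the paper's proof is absent.

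In Step 1, the Busemann character pulled back to $G$ via constant sequences is identically zero: a constant sequence $(g)$ has word length $O(1) = o(s_n)$, so it lies in $\mathrm{Sublin}^{\omega}(G,\mathbf{s})$ and acts trivially on $\cone$. No non-trivial character on $G$ can be extracted from the cone this way. The paper's substitute is the \emph{modular function} $\Delta_G$, which is an honest continuous character on $G$ from the start; the focal dynamics is used only to prove it is non-trivial and suitably large. Concretely (Lemma~\ref{lem-focal-unim}), the focal picture forces the ball $B_S(\ell s_n)$ to be covered $\omega$-a.s.\ by a bounded number of sets of the form $\xi_n(\ell)^j \cdot B_S(2\ell s_n/3)\cdot \xi_n(\ell)^{-i}$, whence $\mu(B_S(\ell s_n)) \leq 3\Delta_G(\xi_n(\ell))^2\,\mu(B_S(2\ell s_n/3))$; unimodularity would then contradict the exponential growth forced by the branching of the tree, so $\Delta_G$ is non-trivial and in fact $\Delta_G(\xi_n(\ell)) \geq c\rho^{\ell s_n}$. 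Closedness of the image of $\Delta_G$ (needed to get $\mathbb{Z}$ or $\mathbb{R}$ rather than a dense subgroup) comes from Lemma~\ref{lem-focal-cocompact}: a closed normal subgroup whose precone contains a hyperbolic isometry of $\cone$ has cocompact orbit in the cone (its axes all contain the fixed ray), hence is cocompact in $G$ by Corollary~\ref{lem-cocomp-cone}. None of this appears in your sketch, and the ``Busemann'' route cannot replace it.

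In Step 2, your contradiction argument proposes to ``feed'' a bad subsequence ``into a cleverly chosen (lacunary) scaling sequence'' — but this move is unavailable: the hypothesis provides a real tree for \emph{one} scaling sequence $\mathbf{s}$ only, and a different choice of scaling sequence may produce a cone that is not a tree. The paper instead stays with the given $\mathbf{s}$: since $\pc_G(H,\mathbf{s})$ is normal and $\pc(G,\mathbf{s})$ acts transitively, $\pc_G(H,\mathbf{s})$ cannot have a global fixed point, and it cannot contain a hyperbolic element (else $H$ would be cocompact), so its action is horocyclic; elements of $H$ of length $\leq 2s_n$ therefore fix $\xi(1)=(t^{-s_n})^{\omega}$, which reads as $t^{s_n}\cdot B_{G,H}(2s_n)\cdot t^{-s_n}\subset B_{G,H}(s_n)$ $\omega$-a.s.\ (Proposition~\ref{prop-contrac-cone}). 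The passage from this contraction at infinitely many scales to a genuine compacting automorphism is then purely algebraic (Corollary~\ref{prop-sbgfi} and Proposition~\ref{prop-comp-hyp}): one builds a compact symmetric $K\subset H$ with $t^{n_0}K^2t^{-n_0}\subset K$ and $\langle K,t^{n_0}\rangle = H\rtimes\langle t^{n_0}\rangle$, and observes that every $h\in H$ lies in some $t^{-n}K^{2^k}t^{n}\subset t^{-n-k}Kt^{n+k}$, so is eventually pushed into $K$. This de-lacunarisation step — which you correctly identify as the technical heart — is exactly what your proposal leaves unproved.
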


Recall that an automorphism $\alpha \in \mathrm{Aut}(H)$ of a locally compact group $H$ is called compacting if there exists a compact subset $V \subset H$, called a pointwise vacuum set for $\alpha$, such that for every $h \in H$, there exists an integer $n_0 \geq 1$ such that $\alpha^n(h) \in V$ for every $n \geq n_0$. Note that $\alpha \in \mathrm{Aut}(H)$ is compacting if and only if some positive power of $\alpha$ is compacting.

The idea of the proof of Theorem \ref{thm-focal} is to deduce a contracting dynamics at the level of the group from a focal dynamics at the level of one asymptotic cone. The first step in the argument is to prove that a focal lacunary hyperbolic group is a topological semidirect product $H \rtimes \mathbb{Z}$ or $H \rtimes \mathbb{R}$. This will be achieved in Corollary \ref{cor-H-by-Z}.

Recall that if $G$ is a locally compact group endowed with the word metric associated to some compact generating subset, a cyclic subgroup $\left\langle g\right\rangle$ is said to be undistorted if the left multiplication by $g$ is a hyperbolic isometry of $(G,d_S)$, i.e.\ if the limit of $|g^n|_S /n$ is not zero. A sufficient condition for $\left\langle g\right\rangle$ to be undistorted is the existence of a continuous homomorphism $f: G \rightarrow \mathbb{Z}$ such that $f(g) \neq 0$.

The following result provides a criterion for a normal subgroup of a focal lacunary hyperbolic group to be cocompact. 

\begin{lem} \label{lem-focal-cocompact}
Let $G$ be a focal lacunary hyperbolic group, and $N$ a closed normal subgroup containing an undistorted element. Then $N$ is cocompact in $G$.
\end{lem}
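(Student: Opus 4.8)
The plan is to work in the asymptotic cone $T=\cone$ on which $\pc(G,\mathbf{s})$ acts focally, with unique fixed end $\xi$. I will produce inside the normal subgroup $\pc_G(N,\mathbf{s})$ enough hyperbolic isometries to show that $\pc_G(N,\mathbf{s})$ acts \emph{coboundedly} on $T$, and then invoke Corollary \ref{lem-cocomp-cone} to conclude that $N$ is cocompact in $G$.

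The first step turns the undistorted element into a hyperbolic isometry of $T$. Let $g\in N$ be undistorted and put $\ell:=\lim_n|g^n|_S/n>0$. The sequence $(g^{\lfloor s_n\rfloor})$ is $\mathbf{s}$-linear and $N$-valued, hence defines an element $\phi\in\pc_G(N,\mathbf{s})$; computing $d_{\omega}(\phi^k x_0,x_0)$ for $x_0=(e)^{\omega}$ from the convergence $|g^m|_S/m\to\ell$ shows that $\phi$ is a hyperbolic isometry of $T$ of translation length $\ell$ with $x_0$ on its axis $L:=\mathrm{Min}_{\phi}$. Since $\phi\in\pc(G,\mathbf{s})$ and the action is focal, $\phi$ fixes $\xi$; as a hyperbolic isometry of a real tree fixes on the boundary exactly the two endpoints of its axis, $\xi$ is one of them, so $L$ is the bi-infinite geodesic from $\xi$ to some end $\eta$.

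The heart of the matter is a coarse-density estimate for the orbit $\mathcal{C}_N=\pc_G(N,\mathbf{s})\cdot x_0$. Because $x_0\in L$ and $\phi$ translates $L$ by $\ell$, the points $\phi^k x_0$ are $\ell$-spaced along $L$; since $\nu\phi^k\in\pc_G(N,\mathbf{s})$ for $\nu\in\pc_G(N,\mathbf{s})$, it follows that each line $\nu L$ lies in the $(\ell/2)$-neighbourhood of $\mathcal{C}_N$, and in particular so does $L$. Now fix $x\in T$. By transitivity of the $\pc(G,\mathbf{s})$-action, $x=\psi x_0$ for some $\psi\in\pc(G,\mathbf{s})$; by normality $\psi\phi\psi^{-1}\in\pc_G(N,\mathbf{s})$ is hyperbolic of translation length $\ell$, with axis $\psi L\ni x$ and endpoints $\xi$ (because $\psi$ fixes $\xi$) and $\psi\eta$. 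The lines $L$ and $\psi L$ share the end $\xi$, so their rays towards $\xi$ eventually coincide, meeting at a point $b$; as $b\in L$ it lies within $\ell/2$ of a point $\mu x_0\in\mathcal{C}_N$, and as $b\in\psi L$ this point lies within $\ell/2$ of $\psi L$. Applying powers of $\psi\phi\psi^{-1}$ to $\mu x_0$ keeps us in $\mathcal{C}_N$, keeps us within $\ell/2$ of $\psi L$, and the nearest-point projections onto $\psi L$ run over an $\ell$-spaced sequence covering $\psi L$; choosing the right power places a point of $\mathcal{C}_N$ within $\ell$ of $x$. Hence $\mathcal{C}_N$ is $\ell$-dense in $T$, the action of $\pc_G(N,\mathbf{s})$ on $T$ is cobounded, and Corollary \ref{lem-cocomp-cone} gives the result.

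The step that requires the most care is this last estimate, and specifically the \emph{uniformity} of the constant in $x$. Pushing $x_0$ along a single conjugated axis $\psi L$ is not by itself enough, because the distance from $\mathcal{C}_N$ to $\psi L$ is a priori not bounded independently of $\psi$. The feature of the focal hypothesis that makes it work is that \emph{every} conjugate axis $\psi L$ shares the common fixed end $\xi$ with $L$, which forces $\psi L$ to come within the fixed coarseness constant of $L\subseteq\mathcal{C}_N$. The remaining ingredients are standard: the existence and uniqueness of nearest-point projections onto convex subtrees of a (complete) real tree, the fact that two rays converging to the same end eventually merge, and the behaviour of a hyperbolic tree isometry on a point lying off its axis.
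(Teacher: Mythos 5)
Your proof is correct and follows essentially the same route as the paper: promote the undistorted element to a hyperbolic isometry of $\pc_G(N,\mathbf{s})$ whose axis contains the fixed end, use normality to move such axes through every point, deduce that the $\pc_G(N,\mathbf{s})$-orbit is cobounded in the cone, and conclude via Corollary \ref{lem-cocomp-cone}. The only cosmetic difference is that you conjugate explicitly and run a projection/density estimate where the paper invokes Lemma \ref{lem-norm-hyp} and translates the point onto the ray $\xi$ before pushing it toward $(e)^{\omega}$.
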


\begin{proof}
We denote by $\mathcal{C} = \cone$ an asymptotic cone of $G$ that is a real tree and such that the action of $\pc(G,\mathbf{s})$ on $\mathcal{C}$ is focal. Let $\xi: [0, + \infty [ \rightarrow \mathcal{C}$ be the ray emanating from $(e)^{\omega}$ representing the end of $\mathcal{C}$ that is fixed by $\pc(G,\mathbf{s})$. Since the group $N$ contains an undistorted element, it follows that the group $\pc_G(N,\mathbf{s})$ acts on $\mathcal{C}$ with a hyperbolic element $h$, whose translation length will be denoted by $\ell$. Without loss of generality, we may assume that $(e)^{\omega}$ belongs to the axis of $h$. Indeed, if $(g_n)^{\omega}$ is a point on the axis of $h$ and if we denote by $g = (g_n)$, then $g^{-1} h g$ is hyperbolic and contains $(e)^{\omega}$ on its axis. Since $\pc_G(N,\mathbf{s})$ is normal in $\pc(G,\mathbf{s})$, the element $g^{-1} h g$ remains in $\pc_G(N,\mathbf{s})$, and the claim is proved. Now since the action of $\pc(G,\mathbf{s})$ on $\mathcal{C}$ is supposed to be focal, the axis of $h$ must contain the entire ray $\xi$.

Let us now prove that the action of $\pc_G(N,\mathbf{s})$ on $\mathcal{C}$ is cocompact. According to Corollary \ref{lem-cocomp-cone}, this finishes the proof of the proposition. Let $x$ be a point of $\mathcal{C}$. We will prove that the $\pc_G(N,\mathbf{s})$-orbit of $x$ in $\mathcal{C}$ intersects the segment joining $(e)^{\omega}$ and $\xi(\ell)$. According to Lemma \ref{lem-norm-hyp}, there exists some hyperbolic element $\gamma \in \pc_G(N,\mathbf{s})$ whose axis contains $x$. But since the action of $\pc(G,\mathbf{s})$ on $\mathcal{C}$ is focal, the axis of $\gamma$ intersects $\xi$ along an infinite ray, and by translating along the axis of $\gamma$, there exists some $n \in \mathbb{Z}$ so that $y = \gamma^n x$ belongs to $\xi$. But now since the axis of $h$ contains the ray $\xi$ and since $h$ translates along its axis by an amount of $\ell$, we can find $m \in \mathbb{Z}$ so that $h^m y$ remains in $\xi$ and is at distance at most $\ell$ from $(e)^{\omega}$.
\end{proof}

\begin{rmq} \label{rmq-lem-focal-cocomp}
Actually the same proof works with the only assumption that $N$ is a closed normal subgroup such that $\pc_G(N,\mathbf{s})$ acts on $\mathcal{C}$ with a hyperbolic element. This will be used in the proof of Proposition \ref{prop-contrac-cone}.
\end{rmq}

%\begin{cor}
%Let $G$ be a focal lacunary hyperbolic group such that $G^\circ$ is not of polynomial growth. Then $G^\circ$ is cocompact in $G$.
%\end{cor}

%\begin{proof}
%Again we may assume that $G^\circ$ is a Lie group. It follows from the work of Guivarc'h that connected Lie groups of polynomial growth are exactly those for which for every element $g$ of the group, the automorphism $\mathrm{Ad}_g$ of the Lie algebra has all its eigenvalues of modulus one. 

%Henceforth we assume that there is an element $g \in  G^\circ$ such that $\mathrm{Ad}_g$ has at least one eigenvalue of modulus $>1$. Such an element ***ARGUMENT*** will satisfy the property that the length of $g^n$ with respect to some compact generating set $S$ of $G$, grows linearly; that is there exists $c > 0$ such that for every $n \in \mathbb{Z}$, $c^{-1} |n| \leq |g^n|_S \leq c|n|$. 

%Cela donne un élément hyperbolique dans $\pc(G^\circ)$.
%\end{proof}

Recall that if $G$ is a locally compact group and $\mu$ a left-invariant Haar measure on $G$, for every $g \in G$ there exists a unique positive real number $\Delta_G(g)$ such that $\mu(Ug^{-1}) = \Delta_G(g) \mu(U)$ for every Borel subset $U$. The function $\Delta_G: G \rightarrow \mathbb{R}^{*}_+$ is called the modular function of $G$, and is a continuous group homomorphism.

The following proposition, which is a crucial step in the argument, consists in obtaining an estimate on the modular function of a focal lacunary hyperbolic group. In the proof, we take advantage of an idea appearing in the end of the proof of Theorem 6.12 in \cite{DS}.

\begin{lem} \label{lem-focal-unim}
Let $G$ be focal lacunary hyperbolic group, and $\mathcal{C} = \cone$ an asymptotic cone of $G$ that is a real tree and such that the action of $\pc(G,\mathbf{s})$ is focal. Let $\xi: [0, + \infty [ \rightarrow \mathcal{C}$, $\ell \mapsto (\xi_n(\ell))^{\omega}$, be the geodesic ray emanating from $(e)^{\omega}$ representing the end of $\mathcal{C}$ that is fixed by $\pc(G,\mathbf{s})$. Then there exist some constants $c>0$, $\rho > 1$, such that for every $\ell \geq 0$, we have \[ c \rho^{\ell s_n} \leq \Delta_G(\xi_n(\ell)) \] \mbox{$\omega$-almost} surely.
\end{lem}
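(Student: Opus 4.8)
The plan is to turn $\log\Delta_G(\xi_n(\ell))$ into a linear function of $\ell$ at scale $s_n$, and then to pin down its sign using the focal dynamics.

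\emph{Linearising the modular function along $\xi$.} Since $\Delta_G$ is a continuous homomorphism and $S$ is compact, $K:=\sup_{s\in S}|\log\Delta_G(s)|<\infty$ and $|\log\Delta_G(g)|\le K|g|_S$ for all $g\in G$; as $|\xi_n(\ell)|_S/s_n\to^{\omega}\ell$, the quantity $\psi(\ell):=\lim^{\omega}s_n^{-1}\log\Delta_G(\xi_n(\ell))$ is a well-defined real number with $|\psi(\ell)|\le K\ell$. I claim $\psi$ is additive. The element $g^{(\ell)}:=(\xi_n(\ell))\in\pc(G,\mathbf{s})$ sends $(e)^{\omega}$ to $\xi(\ell)$ and fixes the end $\omega_0$ represented by $\xi$ (every element of $\pc(G,\mathbf{s})$ does, the action being focal), hence it maps the ray $\xi$ isometrically onto the subray of $\xi$ issuing from $\xi(\ell)$; so $g^{(\ell)}\cdot\xi(\ell')=\xi(\ell+\ell')$ for all $\ell,\ell'\ge0$. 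Applying $(g^{(\ell)})^{-1}$ gives $(\xi_n(\ell)^{-1}\xi_n(\ell+\ell'))^{\omega}=\xi(\ell')$, i.e.\ $d_S(\xi_n(\ell)^{-1}\xi_n(\ell+\ell'),\xi_n(\ell'))=o(s_n)$ $\omega$-almost surely; combining this with the Lipschitz bound and the identity $\log\Delta_G(\xi_n(\ell+\ell'))=\log\Delta_G(\xi_n(\ell))+\log\Delta_G(\xi_n(\ell)^{-1}\xi_n(\ell+\ell'))$, then dividing by $s_n$ and taking $\lim^{\omega}$, yields $\psi(\ell+\ell')=\psi(\ell)+\psi(\ell')$. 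An additive function on $[0,+\infty[$ bounded on bounded sets is linear, so $\psi(\ell)=c_1\ell$ for some $c_1\in[-K,K]$. Once $c_1>0$ is known, the lemma follows with $\rho:=e^{c_1/2}$ and $c:=1$: for each fixed $\ell\ge0$ one has $\omega$-almost surely $s_n^{-1}\log\Delta_G(\xi_n(\ell))\ge\tfrac12 c_1\ell$, that is $\Delta_G(\xi_n(\ell))\ge\rho^{\ell s_n}$.

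\emph{Reducing to the sign of $c_1$.} The action being focal, $\pc(G,\mathbf{s})$ contains a hyperbolic isometry, and after conjugating it inside $\pc(G,\mathbf{s})$ we may assume $h=(h_n)\in\pc(G,\mathbf{s})$ is hyperbolic with $(e)^{\omega}$ on its axis. Since $\omega_0$ is an endpoint of the axis of every hyperbolic isometry of a focal action, the axis of $h$ contains the whole ray $\xi$; after possibly replacing $h$ by $h^{-1}$, it translates toward $\omega_0$ by $\ell_0:=\|h\|>0$, so $h\cdot(e)^{\omega}=\xi(\ell_0)$ and $d_S(h_n,\xi_n(\ell_0))=o(s_n)$ $\omega$-almost surely. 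Hence $\lim^{\omega}s_n^{-1}\log\Delta_G(h_n)=\psi(\ell_0)=c_1\ell_0$, and proving $c_1>0$ amounts to showing that $\Delta_G(h_n)$ grows exponentially in $s_n$ along $\omega$.

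\emph{The main obstacle: positivity of $c_1$.} This is the step that genuinely uses that the action is focal and \emph{not lineal}, and is where the idea from the end of the proof of Theorem~6.12 in \cite{DS} comes in. I would phrase it via Haar measure: for a left Haar measure $\mu$ and a word ball $B_n:=\{g:|g|_S\le Rs_n\}$ one has $\mu(B_nh_n^{-k})=\Delta_G(h_n)^{k}\mu(B_n)$ for every $k\ge1$, so it suffices to produce $c'>0$ with $\mu(B_nh_n^{-k})\ge e^{c'ks_n}\mu(B_n)$ $\omega$-almost surely. Geometrically, right-translating a ball by $h_n^{-k}$ pushes it along the axis of $h$ toward $\omega_0$, and the branching of the real tree $\mathcal{C}$ along $\xi$ --- present exactly because the action fails to be lineal --- forces many group elements to be identified in $\mathcal{C}$, so the region they occupy in $G$ must carry Haar volume exponentially larger than $\mu(B_n)$. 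The technical heart, which I expect to be the main obstacle, is to make this contraction estimate uniform over the whole ball $B_n$ (the naive estimate in the cone only controls a single sequence, not uniformly), and one may prefer to work it out directly on the Cayley--Abels graph. Alternatively one can argue by contradiction: if $c_1\le0$, then from the hyperbolic element $h$ one produces a closed normal subgroup $N$ (namely $\ker\Delta_G$, or $G$ itself if $G$ is already unimodular) such that $\pc_G(N,\mathbf{s})$ acts on $\mathcal{C}$ with a hyperbolic element; by Remark~\ref{rmq-lem-focal-cocomp} the subgroup $N$ is then cocompact, so $\Delta_G$ has relatively compact, hence trivial, image and $G$ is unimodular; the remaining case $c_1=0$ with $G$ unimodular is then excluded by the same volume argument, which in this situation only has to contradict unimodularity.
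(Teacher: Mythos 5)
Your linearization of $\log\Delta_G$ along $\xi$ (via the Lipschitz bound $|\log\Delta_G(g)|\le K|g|_S$ and the additivity of $\psi$) and the reduction to the sign of the slope $c_1$ are correct, but they are the easy part; the entire content of the lemma is the positivity of $c_1$, and you leave that as a sketch with an acknowledged hole, so the proof is not complete. Neither of your two sketches closes it. In the direct volume sketch you never produce an actual estimate, and the uniformity issue you flag as the ``technical heart'' is in fact handled by a standard ultrafilter diagonal argument that you do not invoke: if for $\omega$-almost every $n$ some element of $B_S(\ell s_n)$ escaped the desired covering, a diagonal choice of such elements would yield a single sequence $(g_n)\in\pc(G,\mathbf{s})$ contradicting the pointwise estimate in the cone, so per-sequence statements do upgrade to statements about the whole ball $\omega$-almost surely. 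In the fallback sketch, the claim that for $c_1\le 0$ one can produce a hyperbolic element of $\pc_G(\ker\Delta_G,\mathbf{s})$ is unjustified: knowing $\log\Delta_G(h_n)=o(s_n)$ (let alone $\asymp c_1\ell_0 s_n<0$) does not allow you to correct $h_n$ into $\ker\Delta_G$ by an element of sublinear word length, since the image of $\Delta_G$ may be dense and the Lipschitz bound only controls $\log\Delta_G$ in terms of word length, not conversely. And even granting that step, you are thrown back on ``the same volume argument'', i.e.\ precisely the part you did not write.

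For comparison, the paper's proof is a direct covering argument that needs neither the linearization nor a case analysis on the sign of $c_1$. Focality forces, for every $(g_n)$ with $|g_n|_S\le\ell s_n$, the rays $(g_n)\cdot\xi$ and $\xi$ to share a subray containing $\xi(\ell)$, hence either $(g_n)$ or $(g_n)^{-1}$ moves $\xi(\ell)$ to within $\ell/2$ of $\{\xi(\ell),\xi(2\ell)\}$; after the diagonal upgrade this covers $B_S(\ell s_n)$, $\omega$-almost surely, by three sets of the form $\xi_n(\ell)^{i}\cdot B_S(2\ell s_n/3)\cdot\xi_n(\ell)^{-j}$ with $i,j\in\{1,2\}$. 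Taking left Haar measure and using $\mu(Ug^{-1})=\Delta_G(g)\mu(U)$ gives $\mu(B_S(\ell s_n))\le 3\,\Delta_G(\xi_n(\ell))^2\,\mu(B_S(2\ell s_n/3))$. If $G$ were unimodular the right-hand side would be $3\mu(B_S(2\ell s_n/3))$, which contradicts the growth forced by the fact that every point of $\mathcal{C}$ is a branching point with infinite spheres; so $G$ is non-unimodular, hence of exponential growth, and the left-hand side is bounded below by $c_1\alpha^{\ell s_n}$, yielding the stated bound with $\rho=\sqrt{\alpha}$. This single inequality simultaneously excludes $c_1<0$ and $c_1=0$ and produces the explicit constants --- exactly the step your proposal is missing.
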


%\begin{lem} \label{lem-focal-unim}
%Any focal lacunary hyperbolic group is non-unimodular.
%\end{lem}

\begin{proof}
%Let $G$ be a focal lacunary hyperbolic group, and $\mathcal{C} = \cone$ an asymptotic cone of $G$ that is a real tree such that the action of $\pc(G,\mathbf{s})$ on $\mathcal{C}$ is focal. We denote by $\xi: [0, + \infty [ \rightarrow \mathcal{C}$, $\ell \mapsto (\xi_n(\ell))^{\omega}$, the ray emanating from $(e)^{\omega}$ representing the end of $\mathcal{C}$ which is fixed by $\pc(G,\mathbf{s})$.

First note that for every $\ell \geq 0$, the element $(\xi_n(\ell)) \in \pc(G,\mathbf{s})$ sends the point $(e)^{\omega}$ to $\xi(\ell)$ by definition. But since the action of $\pc(G,\mathbf{s})$ on $\mathcal{C}$ is supposed to be focal, the image of the geodesic ray $\xi$ by $(\xi_n(\ell))$ eventually coincides with $\xi$. It follows that $(\xi_n(\ell)) \cdot \xi$ is exactly the infinite subray of $\xi$ emanating from $\xi(\ell)$. In particular for every $k \geq 1$, $(\xi_n(\ell)) \cdot \xi(k \ell) = \xi((k+1) \ell)$, and by a straightforward induction we obtain $\xi(k \ell) = ( \xi_n(\ell)^k )^{\omega}$.

Let $S$ be a compact generating subset of $G$, and denote by $B_S(r)$ the closed ball of radius $r \geq 0$ around the identity with respect to the word metric associated to $S$. Let $\ell \geq 0$, and $(g_n) \in \pc(G,\mathbf{s})$ such that $|g_n| \leq \ell s_n$ for every $n \geq 1$. The image of the point $(e)^{\omega}$ under such an element $(g_n)$ is at distance at most $\ell$ from $(e)^{\omega}$. The action being focal, it follows from this observation that the two rays $(g_n) \cdot \xi$ and $\xi$ intersect along an infinite subray of $\xi$ containing the point $\xi(\ell)$.

Now let us assume for a moment that the element $(g_n)$ is either elliptic or has the fixed end of $\mathcal{C}$ for attractive endpoint. Since the translation length of $(g_n)$ is at most $\ell$, it follows from the above observation that $(g_n) \cdot \xi(\ell)$ is at distance at most $\ell / 2$ from either $\xi(\ell)$ or $\xi(2 \ell)$. This implies that \mbox{$\omega$-almost} surely \[ d_S \left( g_n \xi_n(\ell), \left\{\xi_n(\ell), \xi_n(\ell)^2 \right\} \right) \leq \frac{2}{3} \ell s_n, \] where $d_S(g, \left\{h,k\right\})$ is by definition the minimum between $d_S(g,h)$ and $d_S(g,k)$. This inequality can be reformulated by saying that \mbox{$\omega$-almost} surely \[ g_n \in  \xi_n(\ell) \cdot B_S \left( 2 \ell s_n / 3 \right) \cdot \xi_n(\ell)^{-1} \, \cup \, \xi_n(\ell)^2 \cdot B_S \left( 2 \ell s_n / 3 \right) \cdot \xi_n(\ell)^{-1}. \] Now if the element $(g_n)$ is a hyperbolic isometry having the fixed end of $\mathcal{C}$ for repulsive endpoint, then we can apply the previous argument to $(g_n^{-1})$.

So we have proved that for every $\ell \geq 0$, \mbox{$\omega$-almost} surely the ball of radius $\ell s_n$ around the identity in $G$ lies inside \[ \xi_n(\ell) \cdot B_S \left( 2 \ell s_n / 3 \right) \cdot \xi_n(\ell)^{-1} \, \cup \, \xi_n(\ell)^2 \cdot B_S \left( 2 \ell s_n / 3 \right) \cdot \xi_n(\ell)^{-1} \, \cup \, \xi_n(\ell) \cdot B_S \left( 2 \ell s_n / 3 \right) \cdot \xi_n(\ell)^{-2}. \] Now if we let $\mu$ be a left-invariant Haar measure on $G$, then for every $\ell \geq 0$, \mbox{$\omega$-almost} surely \[ \mu \left( B_S(\ell s_n) \right) \leq 2 \mu \left( B_S \left( 2 \ell s_n / 3 \right) \cdot \xi_n(\ell)^{-1} \right) + \mu \left( B_S \left( 2 \ell s_n / 3 \right) \cdot \xi_n(\ell)^{-2} \right). \] Dividing by $\mu \left( B_S \left( 2 \ell s_n / 3 \right) \right)$, we obtain \[ \frac{\mu \left( B_S \left(\ell s_n \right) \right)}{\mu \left( B_S \left( 2 \ell s_n / 3 \right) \right)} \leq 2 \Delta_G (\xi_n(\ell)) + \Delta_G (\xi_n(\ell))^2 \leq 3 \Delta_G (\xi_n(\ell))^2. \]
We claim that the Haar-measure $\mu$ is not right-invariant. Let us argue by contradiction and assume that $\mu$ is right-invariant, which implies that the right-hand side of the last inequality is constant equal to $3$. Then for every $\ell \geq 0$, \mbox{$\omega$-almost} surely $\mu \left( B_S(\ell s_n) \right) \leq  3 \mu \left( B_S \left( 2 \ell s_n / 3 \right) \right)$. %Such an inequality on the growth function of $G$ implies that the asymptotic cone $\mathcal{C}$ is proper \cite[p.68]{GroPoly}, which is a contradiction because by assumption $\mathcal{C}$ is a real tree of continuum branching at every point. So $\mu$ cannot be right-invariant, i.e.\ $G$ is non-unimodular.
Now since every point of the real tree $\mathcal{C}$ is a branching point, spheres of any given radius in $\mathcal{C}$ are infinite, and it is not hard to see that this establishes a contradiction with the above inequality on the growth function of $G$. So $\mu$ cannot be right-invariant, i.e.\ $G$ is non-unimodular. In particular the group $G$ has exponential growth, and we easily deduce that the left-hand side of the above inequality is at least $c_1 \alpha^{\ell s_n}$ for some constants $c_1 > 0$, $\alpha >1$, and the conclusion follows with $c = \sqrt{c_1 / 3}$ and $\rho = \sqrt{\alpha}$.
\end{proof}

%We omit the proof of the following easy lemma.

%\begin{lem} \label{lem-gp-topo}
%Let $G$ be a metrizable topological group, $\Gamma$ a discrete subgroup and $K$ a compact subset of $G$. Then $\Gamma K$ is a closed subset of $G$.
%\end{lem}

\begin{cor} \label{cor-H-by-Z}
If $G$ is a focal lacunary hyperbolic group, then $G$ admits a topological semidirect product decomposition $H \rtimes \mathbb{Z}$ or $H \rtimes \mathbb{R}$, where $H$ is the kernel of the modular function of $G$.
\end{cor}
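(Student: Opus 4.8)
The plan is to extract from Lemma~\ref{lem-focal-unim} the only homomorphism available, namely the modular function $\Delta_G\colon G\to\mathbb{R}^{*}_+$, and to show that its kernel does the job. By Lemma~\ref{lem-focal-unim} we have $\Delta_G(\xi_n(\ell))\geq c\rho^{\ell s_n}\to+\infty$ along $\omega$ for any $\ell>0$, so $\Delta_G$ is nontrivial; hence $H:=\ker\Delta_G$ is a proper closed normal subgroup, and $\Delta_G$ induces a continuous injective homomorphism $G/H\hookrightarrow\mathbb{R}^{*}_+$ with image $\Delta_G(G)$. Since $G$ is compactly generated it is $\sigma$-compact, and so are all its Hausdorff quotients. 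What must be proved is therefore that $\Delta_G(G)$ is topologically either $\mathbb{Z}$ or $\mathbb{R}$, and that the resulting extension $1\to H\to G\to\Delta_G(G)\to1$ splits.

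For the first part I would use that every subgroup of $(\mathbb{R},+)$ is either infinite cyclic and discrete, or dense. If $\Delta_G(G)$ is infinite cyclic, then $G/H\to\Delta_G(G)$ is a continuous bijective homomorphism onto a discrete group, so by the open mapping theorem $G/H$ is discrete infinite cyclic; if $\Delta_G(G)=\mathbb{R}^{*}_+$, the open mapping theorem gives $G/H\cong\mathbb{R}$ topologically. The point is to exclude the remaining possibility, that $\Delta_G(G)$ is a \emph{proper dense} subgroup of $\mathbb{R}^{*}_+$; I expect this to be the crux.

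To exclude a proper dense $\Delta_G(G)$, observe first that such a subgroup is not cyclic, hence $\Delta_G(G)/\langle\alpha\rangle$ is infinite for every $\alpha\in\Delta_G(G)$ with $\alpha\neq1$ (a subgroup of $\mathbb{R}$ with a cyclic finite-index subgroup would itself be cyclic). Fix $g\in G$ with $\alpha:=\Delta_G(g)\neq1$. As $\log\Delta_G\colon G\to\mathbb{R}$ is a continuous homomorphism not vanishing at $g$, the cyclic group $\langle g\rangle$ is undistorted (the sufficient condition recalled just before Lemma~\ref{lem-focal-cocompact} works verbatim with $\mathbb{R}$ in place of $\mathbb{Z}$), so the closed normal subgroup $N$ generated by $g$ contains an undistorted element and is cocompact in $G$ by Lemma~\ref{lem-focal-cocompact}; thus $G/N$ is compact. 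Since $\Delta_G$ takes a single value on the $G$-conjugacy class of $g$, we get $\Delta_G(N)=\langle\alpha\rangle$, which is discrete, hence closed, in $\mathbb{R}^{*}_+$; consequently $NH=\Delta_G^{-1}(\langle\alpha\rangle)$ is a closed normal subgroup of $G$ containing $N$, and $G/NH$ is a compact group, being a quotient of $G/N$. But $\Delta_G$ induces a continuous injective homomorphism of $G/NH$ into the circle $\mathbb{R}^{*}_+/\langle\alpha\rangle$, with image $\Delta_G(G)/\langle\alpha\rangle$; the image of a compact group is a compact, hence closed, subgroup of the circle, and being infinite it must be the whole circle. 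This forces $\Delta_G(G)\cdot\langle\alpha\rangle=\mathbb{R}^{*}_+$, i.e.\ $\Delta_G(G)=\mathbb{R}^{*}_+$, contradicting properness. Hence $\Delta_G(G)$ is infinite cyclic or equals $\mathbb{R}^{*}_+$.

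It remains to split the extension. If $\Delta_G(G)=\langle\alpha\rangle$ is infinite cyclic, then $H$ is open in $G$ (as $G/H$ is discrete); picking $g$ with $\Delta_G(g)=\alpha$, the discrete cyclic subgroup $\langle g\rangle$ meets $H$ trivially and satisfies $H\langle g\rangle=G$, and since $H$ is open the bijection $H\times\langle g\rangle\to G$ is a local homeomorphism, so $G=H\rtimes\langle g\rangle$ as a topological group. If $\Delta_G(G)=\mathbb{R}^{*}_+$, then $G/H\cong\mathbb{R}$ and the extension $1\to H\to G\to\mathbb{R}\to1$ splits topologically --- equivalently, $G$ contains a closed one-parameter subgroup mapping isomorphically onto $\mathbb{R}$ --- by standard structure theory of locally compact groups (solution of Hilbert's fifth problem); thus $G=H\rtimes\mathbb{R}$. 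In both cases $1\in\mathbb{Z}$, resp.\ $\mathbb{R}$, acts on $H$ by conjugation by the chosen complement, which is exactly the asserted decomposition. The main obstacle is the exclusion of a proper dense $\Delta_G(G)$ carried out in the previous paragraph: this is precisely where focality enters, through Lemma~\ref{lem-focal-cocompact}. (That the resulting automorphism is compacting is not needed here; it is the second step of the proof of Theorem~\ref{thm-focal}.)
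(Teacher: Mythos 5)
Your proof is correct and follows essentially the same route as the paper: both arguments hinge on Lemma \ref{lem-focal-unim} to get a non-trivial modular function, then produce a closed normal subgroup containing an undistorted element (your normal closure of $g$, the paper's $\Delta_G^{-1}(\langle\Delta_G(\gamma)\rangle)$), apply Lemma \ref{lem-focal-cocompact} to get cocompactness, and use compactness of the quotient to force $\Delta_G(G)$ to be closed in $\mathbb{R}^{*}_+$ --- your exclusion of a proper dense image via the circle $\mathbb{R}^{*}_+/\langle\alpha\rangle$ is just the contrapositive phrasing of the paper's closedness argument. The splitting step is also identical, so no further comment is needed.
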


\begin{proof}
Let $H$ be the kernel of the modular function $\Delta_G: G \rightarrow \mathbb{R}^{*}_+$. Assume that we have proved that the image of $\Delta_G$ is a closed non-trivial subgroup of $\mathbb{R}^{*}_+$. Then the image of $\Delta_G$ is either discrete and infinite cyclic, or topologically isomorphic to $\mathbb{R}$. In the first case we easily have $G = H \rtimes \mathbb{Z}$, and in the other case we use the fact that any quotient homomorphism from a locally compact group to the group $\mathbb{R}$ is split, and deduce that $G = H \rtimes \mathbb{R}$.

So we should prove that the image of $\Delta_G$ is closed and non-trivial. According to Lemma \ref{lem-focal-unim}, we can choose some $\xi_n(\ell) = \gamma \in G$ such that $\Delta_G(\gamma) > 1$. Let us consider the subgroup $N = H \rtimes \left\langle \gamma\right\rangle$ of $G$ generated by $H$ and $\gamma$. Since $H$ contains the derived subgroup of $G$, the subgroup $N$ is normal in $G$, and being the preimage by $\Delta_G$ of the discrete subgroup of $\mathbb{R}^{*}_+$ generated by $\Delta_G(\gamma)$, $N$ is a closed subgroup. 

Since $\Delta_G(\gamma) \neq 1$, the cyclic subgroup generated by $\gamma$ is undistorted in $G$, and therefore we are in position to apply Lemma \ref{lem-focal-cocompact}, which implies that $N$ is a cocompact subgroup of $G$. Hence $\Delta_G$ induces a homomorphism from the compact group $G/N$ to $\mathbb{R}^{*}_+/\Delta_G(N)$, which necessarily has a closed image. Being the preimage in $\mathbb{R}^{*}_+$ of this closed subgroup, the image of $\Delta_G$ is closed.
\end{proof}

So we have proved that any focal lacunary hyperbolic group is either of the form $H \rtimes \mathbb{Z}$ or $H \rtimes \mathbb{R}$. We must now prove that the associated action is compacting. The first step towards this result is the following proposition, which says that a focal lacunary hyperbolic group satisfies in some sense a weak local contracting property.

If $H$ is a subgroup of a compactly generated group $G$, we denote by $B_{G,H}(r)$ the closed ball of radius $r \geq 0$ in $H$ around the identity, where $H$ is endowed with the induced metric from $G$.

\begin{prop} \label{prop-contrac-cone}
Let $G$ be a focal lacunary hyperbolic group. Assume that $G$ admits a topological semidirect product decomposition $G = H \rtimes \left\langle t_0 \right\rangle$. Then there exist $t \in \{t_0, t_0^{-1}\}$ and infinitely many $N \geq 1$ such that \[
t^{N} \cdot B_{G,H}(2N) \cdot t^{-N} \subset B_{G,H}(N).\]
\end{prop}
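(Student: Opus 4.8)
The plan is to transport the desired uniform contraction into the focal asymptotic cone. Fix an asymptotic cone $\mathcal{C}=\cone$ on which $\pc(G,\mathbf{s})$ acts focally, let $\xi_\infty\in\partial\mathcal{C}$ be the unique fixed end, and let $\xi\colon[0,+\infty[\to\mathcal{C}$, $\ell\mapsto(\xi_n(\ell))^\omega$, be the ray from $(e)^\omega$ to $\xi_\infty$ of Lemma~\ref{lem-focal-unim}. Since $G$ is non-unimodular (proof of Lemma~\ref{lem-focal-unim}), $\Delta_G$ is non-trivial on $\langle t_0\rangle$, so I choose $t\in\{t_0,t_0^{-1}\}$ with $\Delta_G(t)<1$. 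As $\Delta_G(t)\neq1$ the subgroup $\langle t\rangle$ is undistorted, hence $(t^{\lfloor s_n\rfloor})$ is a hyperbolic isometry of $\mathcal{C}$; a direct computation of its translation length shows $(e)^\omega$ lies on its axis, and because a hyperbolic tree isometry fixing an end has that end as an axis endpoint, its axis is a line containing all of $\xi$. If $(t^{\lfloor s_n\rfloor})$ translated towards $\xi_\infty$, then running the computation in the proof of Lemma~\ref{lem-focal-unim} along the same lines with the elements $(t^{\lfloor ms_n\rfloor})$ (which move $(e)^\omega$ along that axis towards $\xi_\infty$) in place of $(\xi_n(\ell))$ would force $\Delta_G(t)>1$, a contradiction; so $(t^{\lfloor s_n\rfloor})$ translates \emph{away} from $\xi_\infty$. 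Thus for every $c>0$, setting $\tau:=c\,\|(t^{\lfloor s_n\rfloor})\|>0$ and $\gamma:=(t^{\lfloor cs_n\rfloor})$, the point $\gamma^{-1}\cdot(e)^\omega$ is the point $\xi(\tau)$ of $\xi$ lying at distance $\tau$ from $(e)^\omega$ in the direction of $\xi_\infty$.

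The main point is then that $\pc_G(H,\mathbf{s})$ acts on $\mathcal{C}$ with only elliptic elements and fixes the end $\xi_\infty$. Indeed $H$ is not cocompact in $G$ since $G/H$ is $\mathbb{Z}$ or $\mathbb{R}$, so by Remark~\ref{rmq-lem-focal-cocomp} the normal subgroup $\pc_G(H,\mathbf{s})$ contains no hyperbolic isometry of $\mathcal{C}$; as a real tree has no parabolic isometries, its action is bounded or horocyclic, in both cases by only elliptic elements. In the horocyclic case $\pc_G(H,\mathbf{s})$ fixes a unique end of $\mathcal{C}$, which is permuted by $\pc(G,\mathbf{s})$ (as $\pc_G(H,\mathbf{s})$ is normal) and therefore equals $\xi_\infty$ by focality. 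In the bounded case the non-empty fixed-point subtree $F$ of $\pc_G(H,\mathbf{s})$ is $\pc(G,\mathbf{s})$-invariant, hence contains the union of the axes of the hyperbolic elements of $\pc(G,\mathbf{s})$, in particular the axis of $(t^{\lfloor s_n\rfloor})$ and so all of $\xi$. Consequently, any $h\in\pc_G(H,\mathbf{s})$ is elliptic and fixes $\xi_\infty$, so it fixes the ray $[x_0,\xi_\infty)$ from any of its fixed points $x_0$; letting $q_h$ be the point of $\mathrm{Fix}(h)$ nearest $(e)^\omega$ and $m_h$ the median of $(e)^\omega$, $q_h$ and $\xi_\infty$, one has $m_h\in\mathrm{Fix}(h)\cap\xi$, so $m_h=\xi(s_h)$ with $s_h=d((e)^\omega,m_h)\le d((e)^\omega,q_h)=\tfrac12\,d(h\cdot(e)^\omega,(e)^\omega)$, and $h$ fixes $\xi(\ell)$ for every $\ell\ge s_h$.

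Now the proposition follows by a contradiction argument at scale $\mathbf{s}$. Put $N_n:=\lfloor cs_n\rfloor$ for a suitable $c>0$, and suppose that for $\omega$-almost every $n$ one has $t^{N_n}B_{G,H}(2N_n)t^{-N_n}\not\subseteq B_{G,H}(N_n)$; choose $h_n\in H$ with $|h_n|_S\le2N_n$ and $|t^{N_n}h_n t^{-N_n}|_S>N_n$ for such $n$ (and $h_n=e$ otherwise). Then $h:=(h_n)$ and $g:=(t^{N_n}h_n t^{-N_n})$ lie in $\pc_G(H,\mathbf{s})$, and $d(g\cdot(e)^\omega,(e)^\omega)=\mathrm{lim}^\omega|t^{N_n}h_n t^{-N_n}|_S/s_n\ge c$. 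On the other hand, with $\gamma=(t^{N_n})$ as above we have $g=\gamma h\gamma^{-1}$, so by equivariance $d(g\cdot(e)^\omega,(e)^\omega)=d(h\cdot p,p)$ with $p=\gamma^{-1}\cdot(e)^\omega=\xi(\tau)$, $\tau=c\,\|(t^{\lfloor s_n\rfloor})\|$. Since $|h_n|_S\le2N_n$ we get $d(h\cdot(e)^\omega,(e)^\omega)\le2c$, hence $s_h\le c$; so once $c$ (equivalently, the compact generating set, and if necessary the power of $t$ one works with --- harmless for the downstream use, since being compacting is stable under taking powers) is chosen so that $\tau=c\,\|(t^{\lfloor s_n\rfloor})\|\ge c\ge s_h$, the point $p=\xi(\tau)$ is fixed by $h$, so $d(h\cdot p,p)=0$, contradicting $d(g\cdot(e)^\omega,(e)^\omega)\ge c$. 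Hence $t^{N_n}B_{G,H}(2N_n)t^{-N_n}\subseteq B_{G,H}(N_n)$ for $\omega$-almost every $n$, and in particular for infinitely many $N=N_n$.

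The crux is the second paragraph: deducing from the focal dynamics on the tree that $\pc_G(H,\mathbf{s})$ has no hyperbolic element and fixes $\xi_\infty$. This is exactly what converts ``$H$ is not cocompact'' into the usable fact that each relevant $h$ fixes a long sub-ray of $\xi$ pointing towards $\xi_\infty$, against which conjugation by $(t^{N_n})$ pushes $(e)^\omega$; everything else --- the translation-length computations, the identification of $\gamma^{-1}\cdot(e)^\omega$, and matching the constants $2N$ and $N$ --- is bookkeeping.
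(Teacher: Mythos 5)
Your proof is correct and follows essentially the same route as the paper's: identify the ray $\xi$ to the focal end as a sub-ray of the axis of $(t^{\pm\lfloor s_n\rfloor})$, show via Remark~\ref{rmq-lem-focal-cocomp} and normality that $\pc_G(H,\mathbf{s})$ has no hyperbolic element and fixes that end (so elements of bounded displacement fix a terminal sub-ray of $\xi$), and then observe that conjugation by $t^{N_n}$ moves the basepoint deep onto $\xi$. The only point to tidy is the normalization $\tau=c\,\|(t^{\lfloor s_n\rfloor})\|\ge c$: adjusting $c$ alone cannot achieve this, since both sides scale linearly in $c$; you genuinely need either a generating set of the form $K\cup\{t_0^{\pm1}\}$ with $K\subset H$ (which forces the stable translation length of $t$ to be at least $1$) or a fixed power of $t$ --- a normalization wrinkle that the paper's own proof also glosses over in its parametrization of $\xi$, and which, as you note, is harmless for the intended application.
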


\begin{proof}
Let us denote by $\mathcal{C} = \cone$ an asymptotic cone of $G$ that is a real tree and such that the action of $\pc(G,\mathbf{s})$ on $\mathcal{C}$ is focal. Observe that the element $(t_0^{s_n}) \in \pc(G,\mathbf{s})$ is hyperbolic, and its axis is the image of the map $\mathbb{R} \rightarrow \cone$,  $x \mapsto (t_0^{- \left\lfloor  x s_n \right\rfloor })^{\omega}$. One of the two ends of this axis must be the end of $\mathcal{C}$ that is fixed by $\pc(G,\mathbf{s})$, so there is $t \in \{t_0, t_0^{-1} \}$ such that the ray emanating from $(e)^{\omega}$ representing the fixed end of $\mathcal{C}$ is the image of $\xi: [0, + \infty [ \rightarrow \cone$,  $x \mapsto (t^{- \left\lfloor  x s_n \right\rfloor })^{\omega}$.

We claim that $\pc_G(H,\mathbf{s})$ cannot fix a point in $\mathcal{C}$. Indeed, if the set of fixed points of $\pc_G(H,\mathbf{s})$ is not empty, then it is a subtree of $\mathcal{C}$ that is invariant by $\pc(G,\mathbf{s})$ since $H \triangleleft G$. But $\pc(G,\mathbf{s})$ acts transitively on $\mathcal{C}$, so we deduce that the set of fixed points of $\pc_G(H,\mathbf{s})$ is the entire $\mathcal{C}$. It follows that the action of $\pc_G(H,\mathbf{s})$ on $\mathcal{C}$ is trivial, and this implies that the asymptotic cone $\mathcal{C}$ is a line, which contradicts the fact that the action of $\pc(G,\mathbf{s})$ on $\mathcal{C}$ is focal. On the other hand, if $\pc_G(H,\mathbf{s})$ contains some hyperbolic isometry, then according to Remark \ref{rmq-lem-focal-cocomp} the conclusion of Lemma \ref{lem-focal-cocompact} holds and the subgroup $H$ is cocompact in $G$, which is a contradiction. So the action of $\pc_G(H,\mathbf{s})$ on $\mathcal{C}$ must be horocyclic. It follows that if $(h_n)$ is a sequence in $H$ such that $|h_n|_S \leq 2 \ell s_n$ for every $n \geq 1$ (which implies that the distance in $\cone$ between $(e)^{\omega}$ and $(h_n)^{\omega}$ is at most $2 \ell$), then the element $(h_n)$ fixes $\xi([\ell, + \infty [)$. In particular if $|h_n|_S \leq 2 s_n$ for every $n \geq 1$, then $(h_n)$ fixes the point $\xi(1) = (t^{-s_n})^{\omega}$, and we have \[ \mathrm{lim}^{\omega} \frac{d(h_n t^{-s_n},t^{-s_n})}{s_n} = \mathrm{lim}^{\omega} \frac{|t^{s_n} h_n t^{-s_n}|_S}{s_n} = 0. \] So for every $h_n \in  B_{G,H}(2s_n)$, \mbox{$\omega$-almost} surely we have $|t^{s_n} h_n t^{-s_n}|_S \leq s_n$, which is equivalent to saying that \mbox{$\omega$-almost} surely $t^{s_n} \cdot B_{G,H}(2s_n) \cdot t^{-s_n} \subset B_{G,H}(s_n)$. 
%We argue by contradiction. Henceforth we assume that there is $\lambda > 1 $ so that for every $n \geq 1$, there exists an element $h_n \in H$ of length at most $2s_n$ such that \[ |\alpha^{-s_n} h_n |_S = d_S(h_n, \alpha^{s_n}) \geq \lambda s_n . \] This inequality implies that the point $(h_n)^{\omega}$ is at distance at least $\lambda$ from $[\alpha^{s_n}]$ in $\mathcal{C}$. But this cannot happen because any element of the horosphere $\mathrm{Cone}(H)$ which is at distance at most $2$ from the point $(e)^{\omega}$ is at distance exactly $1$ from the point $[\alpha^{s_n}]$. Contradiction.
\end{proof}

%\begin{lem} \label{lem-comp-inc}
%Let $G = H \rtimes \left\langle \alpha \right\rangle$ be a focal lacunary hyperbolic group, where $H$ is the kernel of $\Delta_G$ and $\Delta_G(\alpha) > 1$. There exists some compact subset $K \subset H$ such that $\alpha^{-1} K \alpha \subset K$ and \[H = \bigcup_{n \geq 0}^{\nearrow} \alpha^{n} \left\langle K \right\rangle \alpha^{-n}. \]
%In particular for every $n_0 \geq 1$, $\left\langle K, \alpha^{n_0} \right\rangle = H \rtimes \left\langle \alpha^{n_0} \right\rangle$.
%\end{lem}

%\begin{defi}
%An automorphism $\alpha$ of a group $H$ is said to be strictly confining into a subset $A \subset H$ if:
%\begin{itemize}
%\item $\alpha(A)$ is strictly contained in $A$;
%\item $H$ is equal to the increasing union of the $\alpha^{-n}(A)$, $n \geq 0$;
%\item $\alpha^{n_0}(A^2)$ is contained in $A$ for some integer $n_0 \geq 1$.
%\end{itemize}
%\end{defi}

\begin{cor} \label{prop-sbgfi}
Let $G$ be a focal lacunary hyperbolic group with a topological semidirect product decomposition $G = H \rtimes \left\langle t_0 \right\rangle$. Then there exist $t \in \{t_0, t_0^{-1} \}$, an integer $n_0 \geq 1$ and a compact symmetric subset $K \subset H$ containing the identity such that:
\begin{enumerate}
\item [i)] $\left\langle K, t^{n_0} \right\rangle = H \rtimes \left\langle t^{n_0} \right\rangle$;
\item [ii)] $t^{n_0} \cdot K^2 \cdot t^{-n_0} \subset K$.
\end{enumerate}
\end{cor}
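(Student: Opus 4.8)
The plan is to extract both conclusions from Proposition \ref{prop-contrac-cone} after first fixing a convenient word metric. Let $t \in \{t_0, t_0^{-1}\}$ be the element supplied by Proposition \ref{prop-contrac-cone}. I would choose a compact symmetric generating set of $G$ of the form $S = S_0 \cup \{t, t^{-1}\}$, where $S_0 \subset H$ is compact, symmetric and contains the identity: starting from any compact symmetric generating neighbourhood of the identity and writing each of its elements as $h\,t^{k}$ with $h \in H$, one sees that the finitely many such $h$, together with $t^{\pm 1}$, still generate $G$. For this $S$ one has $|t|_S = 1$ and $S_0 = S \cap H = B_{G,H}(1)$ (note $t \notin H$ since $\Delta_G(t) \neq 1$), and Proposition \ref{prop-contrac-cone} provides an integer $N \geq 1$ with $t^{N} \cdot B_{G,H}(2N) \cdot t^{-N} \subset B_{G,H}(N)$. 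I would then set $n_0 = N$ and $K = B_{G,H}(N)$, which is symmetric, contains the identity, and is compact, being the intersection of the compact ball $B_G(N)$ with the closed subgroup $H = \ker \Delta_G$.

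Property ii) is then essentially formal. Since $B_{G,H}(N) \subseteq B_{G,H}(2N)$, the inclusion of Proposition \ref{prop-contrac-cone} gives $t^{N} B_{G,H}(2N) t^{-N} \subseteq B_{G,H}(2N)$, and iterating yields $t^{kN} \cdot B_{G,H}(2N) \cdot t^{-kN} \subset B_{G,H}(N)$ for every $k \geq 1$. As $K^2 \subseteq B_{G,H}(2N)$ by the triangle inequality, we get $t^{n_0} K^2 t^{-n_0} \subseteq B_{G,H}(N) = K$.

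For property i), I would use two observations. First, $\langle t^{m} S_0 t^{-m} : m \in \mathbb{Z} \rangle = H$: any $h \in H$ is a word in $S$ whose total exponent in $t$ vanishes (the homomorphism $G \to G/H \cong \mathbb{Z}$ reads off that exponent and kills $S_0$), and collecting the $t$'s in such a word rewrites $h$ as a product of $t$-conjugates of elements of $S_0$. Second, the absorbing inclusion above forces all conjugates $t^{m} s_0 t^{-m}$ with $m \geq N$ into $K$: writing $m = kN + r$ with $k \geq 1$ and $0 \leq r < N$, one has $|t^{r} s_0 t^{-r}|_S \leq 2r + 1 \leq 2N$, hence $t^{m} s_0 t^{-m} = t^{kN}(t^{r} s_0 t^{-r}) t^{-kN} \in t^{kN} B_{G,H}(2N) t^{-kN} \subseteq B_{G,H}(N) = K$. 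Now a word in $K$ and $t^{\pm n_0}$ lies in $H$ exactly when its $t$-exponents cancel, so $\langle K, t^{n_0} \rangle \cap H = \langle t^{jN} K t^{-jN} : j \in \mathbb{Z} \rangle$; by the second observation this subgroup contains every $t^{m} s_0 t^{-m}$ with $m \geq N$, and since it is stable under conjugation by $t^{\pm N}$ and every integer is congruent mod $N$ to some $m \geq N$, it contains $t^{k} s_0 t^{-k}$ for all $k \in \mathbb{Z}$. Combined with the first observation this gives $\langle K, t^{n_0} \rangle \cap H = H$, hence $\langle K, t^{n_0} \rangle = H \rtimes \langle t^{n_0} \rangle$.

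The one point that is not purely formal is the compatibility of i) and ii). Condition ii) forces $n_0$ to be tied to the scale $N$ at which the contraction of Proposition \ref{prop-contrac-cone} takes effect, and one might worry that at that scale the ball $B_{G,H}(N)$ is too small to generate $H$ together with $t^{n_0}$, while a much larger $n_0$ would make the needed conjugates too long to fit in any absorbing ball. The resolution, which is the heart of the argument, is that the single choice $n_0 = N$, $K = B_{G,H}(N)$ meets both requirements at once: the very inclusion $t^{N} B_{G,H}(2N) t^{-N} \subseteq B_{G,H}(N)$ that yields ii) simultaneously pushes the infinite family of conjugates $t^{m} S_0 t^{-m}$, $m \geq N$, into $K$, and that family already suffices to recover $H$.
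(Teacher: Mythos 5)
Your argument is correct, and it differs from the paper's in a way worth noting. The paper does not fix an adapted generating set; instead it first finds $N_0$ with $\left\langle B_{G,H}(N_0), t\right\rangle = G$, applies Proposition \ref{prop-contrac-cone} to get $N_1 \geq N_0$ with the contraction inclusion, and forms the auxiliary compact set $K_1 = \bigcup_{i=0}^{N_1-1} t^{i} B_{G,H}(N_1) t^{-i}$, which satisfies $t K_1 t^{-1} \subset K_1$; the increasing union of the subgroups $t^{-n}\left\langle K_1\right\rangle t^{n}$ is then all of $H$, giving $\left\langle K_1, t^{n_0}\right\rangle = H \rtimes \left\langle t^{n_0}\right\rangle$ for \emph{every} $n_0$. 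Crucially, the paper then invokes the "infinitely many $N$" clause of Proposition \ref{prop-contrac-cone} a second time, to find an $n_0$ that both satisfies the contraction and is large enough that $B_{G,H}(n_0) \supseteq K_1$, and takes $K = B_{G,H}(n_0)$. Your proof gets by with a \emph{single} scale $N$: by choosing $S = S_0 \cup \{t^{\pm 1}\}$ with $S_0 \subset H$ and running the exponent-sum rewriting, you show directly that the iterated contraction sweeps every conjugate $t^{m} S_0 t^{-m}$, $m \geq N$, into $B_{G,H}(N)$, and that conjugation by $t^{\pm N}$ inside $\left\langle K, t^{N}\right\rangle$ then recovers all conjugates and hence all of $H$. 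What your route buys is that only one $N$ from Proposition \ref{prop-contrac-cone} is needed (though the proposition does supply infinitely many, so nothing is lost either way); what it costs is the dependence on a tailored generating set, which requires the small preliminary check that such an $S$ exists and that Proposition \ref{prop-contrac-cone} applies to the corresponding word metric (it does, since its proof works for any compact generating set). One cosmetic slip: the elements $h$ obtained by writing the members of a compact generating set as $h t^{k}$ form a compact subset of $H$ (finitely many values of $k$ occur, each contributing a compact slice), not a finite set as your phrasing suggests; this does not affect the argument.
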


\begin{proof}
Let $t$ coming from Proposition \ref{prop-contrac-cone}, and $N_0 \geq 1$ an integer such that $B_{G,H}(N_0)$ together with $t$ generate the group $G$. According to Proposition \ref{prop-contrac-cone}, there exists $N_1 \geq N_0$ such that \[ t^{N_1} \cdot B_{G,H}(2N_1) \cdot t^{-N_1} \subset B_{G,H}(N_1).\] If we set \[ K_1 = \bigcup_{i=0}^{N_1-1} t^{i} \cdot B_{G,H}(N_1) \cdot t^{-i}, \] then $K_1$ is a compact subset of $H$ and by construction conjugating by $t$ sends $K_1$ into itself because \[ t \cdot K_1 \cdot t^{-1} \subset K_1 \cup t^{N_1} \cdot B_{G,H}(N_1) \cdot t^{-N_1} \subset K_1.\] In particular the sequence of compact subsets $(t^{-n} \cdot K_1 \cdot t^{n})_{n \geq 0}$, is increasing. A fortiori the same holds for the sequence of subgroups $(t^{-n} \cdot \left\langle K_1 \right\rangle \cdot t^{n})_{n \geq 0}$, and it follows that the subgroup they generate is nothing but their union. But now by assumption $K_1$ and $t$ generate $G$, so this increasing union of subgroups is the entire subgroup $H$. This observation implies in particular that for every $n_0 \geq 1$, the subgroup generated by $K_1$ and $t^{n_0}$ is equal to $H \rtimes \left\langle t^{n_0} \right\rangle$.

Now we let $n_0$ be an integer satisfying the conclusion of Proposition \ref{prop-contrac-cone} and so that $B_{G,H}(n_0)$ contains $K_1$, and we check that $K = B_{G,H}(n_0)$ satisfies the conclusion. It follows from the last paragraph that the subgroup generated by $K$ together with $t^{n_0}$ is equal to $H \rtimes \left\langle t^{n_0} \right\rangle$ because $K$ contains $K_1$. Besides it is clear that $K^2 \subset B_{G,H}(2 n_0)$, so the inclusion $t^{n_0} \cdot K^2 \cdot t^{-n_0} \subset K$ follows immediately from the conclusion of Proposition \ref{prop-contrac-cone}.
\end{proof}

The following result provides a sufficient condition on a group $G = H \rtimes \left\langle t \right\rangle$ so that the conjugation by the element $t$ induces a compacting automorphism of the group $H$.

\begin{prop} \label{prop-comp-hyp}
Let $G = H \rtimes \left\langle t \right\rangle$ be a locally compact group such that there is some compact symmetric subset $K \subset H$ containing the identity so that:
\begin{enumerate}[label=(\alph*)]
\item $S = K \cup \{ t\}$ generates the group $G$;
\item $t \cdot K^2 \cdot t^{-1} \subset K$.
\end{enumerate}
Then the automorphism of $H$ induced by the conjugation by $t$ is compacting. 
\end{prop}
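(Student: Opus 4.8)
The plan is to show that the compact set $K$ itself is a pointwise vacuum set for the automorphism $\alpha \in \mathrm{Aut}(H)$ defined by $\alpha(h) = t h t^{-1}$; note $\alpha$ is continuous, being the restriction to the closed normal subgroup $H$ of an inner automorphism of the topological group $G$.

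First I would extract from hypothesis (b) a basic contraction estimate. Since $K$ is symmetric and contains the identity, $K \subset K^2$, so (b) gives $\alpha(K) \subset \alpha(K^2) \subset K$; thus the compact sets $\alpha^j(K)$ form a decreasing sequence in $j \ge 0$. Since $\alpha(K)^2 = \alpha(K^2)$, hypothesis (b) reads $\alpha(K)^2 \subset K$, and applying the homomorphism $\alpha^{j-1}$ yields $\alpha^j(K)^2 \subset \alpha^{j-1}(K)$ for every $j \ge 1$. Iterating this by grouping a product of $2^r$ factors dyadically gives: a product of $2^r$ elements of $\alpha^j(K)$ lies in $\alpha^{j-r}(K)$ whenever $j \ge r$. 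Because $e \in \alpha^c(K)$ for all $c \ge 0$, one may pad products with identities, so a product of any $m \ge 1$ elements, each lying in some fixed $\alpha^{c}(K)$ with $c \ge \lceil \log_2 m\rceil$, lies in $\alpha^{\,c - \lceil\log_2 m\rceil}(K) \subset K$.

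Next I would decompose an arbitrary $h \in H$ using hypothesis (a). Since $S = K \cup \{t\}$ generates $G$ and $K = K^{-1}$, write $h = x_1 x_2 \cdots x_N$ with each $x_i \in K \cup \{t, t^{-1}\}$. Expanding this product inside the semidirect product $G = H \rtimes \langle t \rangle$ and collecting $H$-components, the powers of $t$ only conjugate the $K$-letters: if for each position $i$ we let $d_i \in \mathbb{Z}$ be the signed number of $t$'s occurring among $x_1, \dots, x_{i-1}$, then
\[ h \;=\; \prod_{i\,:\,x_i \in K} \alpha^{d_i}(x_i), \]
the product taken in order; write this as $h = \prod_{j=1}^m \alpha^{d_j}(k_j)$ with $k_j \in K$ (the case $m=0$ being $h = e \in K$). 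Then $\alpha^n(h) = \prod_{j=1}^m \alpha^{\,n + d_j}(k_j)$, so as soon as $n \ge \lceil \log_2 m\rceil - \min_j d_j$ every exponent $n+d_j$ is at least $\lceil \log_2 m\rceil$, and the estimate of the previous paragraph gives $\alpha^n(h) \in K$. Hence $K$ is a pointwise vacuum set, i.e.\ $\alpha$ is compacting.

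I do not expect a serious obstacle. The two points requiring care are (i) iterating the contraction from (b) correctly — the dyadic grouping is exactly what lets the single inclusion $\alpha(K)^2 \subset K$ control products of arbitrary length — and (ii) the bookkeeping in the word decomposition: one must keep each $K$-letter of the word separate rather than hoping $K$ is closed under multiplication, since only then do the surviving $H$-components appear as genuine conjugates $\alpha^{d_j}(k_j)$ of elements of $K$.
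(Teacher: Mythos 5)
Your proof is correct and follows essentially the same route as the paper's: your dyadic grouping of products is exactly the paper's iterated inclusion $K^{2^k}\subset t^{-k}Kt^{k}$ obtained from (b), and your word decomposition $h=\prod_j\alpha^{d_j}(k_j)$ is the paper's observation that $H$ is generated by the increasing union of the compact sets $t^{-n}Kt^{n}$, so that every $h\in H$ lies in some $t^{-n}K^{2^k}t^{n}\subset t^{-(n+k)}Kt^{n+k}$. Your version just makes the bookkeeping explicit; there is no gap.
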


\begin{proof}
We check that for every $h \in H$, we have $t^n h t^{-n} \in K$ eventually. The hypotheses imply that $H$ is generated by the increasing union of compact sets $t^{-n} \cdot K \cdot t^{n}$, so that every element of $H$ lies inside $t^{-n} \cdot K^{2^k} \cdot t^{n}$ for some integers $n,k \geq 0$. The latter being included in $t^{-n-k} \cdot K \cdot t^{n+k}$ thanks to $(b)$, the proof is complete.
\end{proof}

We are now able to prove the main result of this section.

\begin{proof}[Proof of Theorem \ref{thm-focal}]
Let $G$ be a focal lacunary hyperbolic group. According to Corollary \ref{cor-H-by-Z}, the group $G$ admits a topological semidirect product decomposition of the form $H \rtimes_{\alpha} \mathbb{Z}$ or $H \rtimes_{\alpha(t)} \mathbb{R}$. To conclude we need to prove that the action of $\alpha$ (resp. $\alpha(1)$) on $H$ is compacting. For the sake of simplicity we denote $\alpha(1)$ by $\alpha$ as well. 

We claim that upon changing $\alpha$ into its inverse, there is some positive power of $\alpha$ satisfying the hypotheses of Proposition \ref{prop-comp-hyp}. In the case when $G = H \rtimes_{\alpha} \mathbb{Z}$ this follows directly from Corollary \ref{prop-sbgfi}. When $G = H \rtimes_{\alpha(t)} \mathbb{R}$, the subgroup $H \rtimes_{\alpha(1)} \mathbb{Z}$ is normal and cocompact in $G$, and therefore focal lacunary hyperbolic as well by Lemma \ref{lem-cone-sametype}, so that Corollary \ref{prop-sbgfi} can also be applied.

Consequently Proposition \ref{prop-comp-hyp} implies that some positive power of $\alpha$ is compacting, and it follows that $\alpha$ is compacting as well.
\end{proof}

%\begin{proof}[Proof of Theorem \ref{thm-focal}]
%We prove that any focal lacunary hyperbolic group $G$ has a cocompact subgroup which is hyperbolic. This implies that $G$ itself is hyperbolic, and by Proposition \ref{prop-same-type-general} $G$ must be focal hyperbolic.

%According to Lemma \ref{lem-H-by-Z}, the group $G$ has a cocompact normal subgroup $G'$ of the form $G' = H \rtimes \mathbb{Z}$. Now by Lemma \ref{lem-cone-sametypebis}, the group $G'$ is focal lacunary hyperbolic as well. So we can apply Corollary \ref{prop-sbgfi}, which implies that $G'$ has a finite index subgroup $G''$ satisfying the hypotheses of Proposition \ref{prop-comp-hyp}. Therefore $G''$ must be hyperbolic, and so is its finite index overgroup $G'$, so the proof is complete. 
%\end{proof}

\subsection{Application to locally compact groups with asymptotic cut-points} \label{subsec-cut}

Recall that in a geodesic metric space $X$, a point $x \in X$ is a cut-point if $X \setminus \{x \}$ is not connected. In a sense, the property of having cut-points in asymptotic cones can be seen as a very weak form of hyperbolicity. Examples of finitely generated groups with cut-points in all their asymptotic cones include relatively hyperbolic groups \cite[Theorem 1.11]{DS} or mapping class groups of punctured surfaces \cite[Theorem 7.1]{Beh}. Actually relatively hyperbolic groups and mapping class groups are examples of the so-called acylindrically hyperbolic groups, and it is proved in \cite{Sisto} that any acylindrically hyperbolic group has cut-points in all its asymptotic cones.

Recall that a law is a non-trivial reduced word $w(x_1,\ldots,x_n)$ in the letters $x_1, \ldots, x_n$. A group $G$ is said to satisfy the law $w(x_1,\ldots,x_n)$ if $w(g_1,\ldots,g_n) = 1$ in $G$ for every $g_1, \ldots, g_n \in G$. Examples of groups satisfying a law are solvable groups or groups of finite exponent. In \cite[Theorem 6.12]{DS}, Drutu and Sapir proved that if a finitely generated group $G$ satisfies a law, then $G$ does not have cut-points in any asymptotic cone, unless $G$ is virtually cyclic. However, this result does not hold in the realm of locally compact groups. For example for every local field $\mathbb{K}$, the affine group $\mathbb{K} \rtimes \mathbb{K}^*$ is a non-elementary hyperbolic LC-group and is solvable of class two.

We will extend the result of Drutu and Sapir to locally compact compactly generated groups in Theorem \ref{thm-law-cutpt} below, by proving that if $G$ is a group satisfying a law that is neither an elementary hyperbolic group nor a focal hyperbolic group, then $G$ does not have cut-points in any of its asymptotic cones. Before doing this, let us derive the following consequence of Theorem \ref{thm-focal}.

\begin{prop} \label{prop-lac-law-hyp}
Let $G$ be a locally compact lacunary hyperbolic group. If $G$ satisfies a law then $G$ is hyperbolic.
\end{prop}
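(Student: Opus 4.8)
The plan is to combine Theorem~\ref{thm-focal} with the structure theory of focal and elementary hyperbolic groups, and use the fact that groups satisfying a law are very rigid. Let $G$ be a locally compact lacunary hyperbolic group satisfying a law $w(x_1,\ldots,x_n)$. By definition $G$ admits one asymptotic cone $\cone$ that is a real tree, and $\pc(G,\mathbf{s})$ acts transitively by isometries on this real tree. Since $G$ satisfies the law $w$, so does $\pc(G,\mathbf{s})$ (this is one of the ``two simple observations'' recorded in Section~\ref{sec-preli}). The first step is therefore to analyze which transitive isometric actions on a real tree can be performed by a group satisfying a law, using the classification into the five types (bounded, horocyclic, lineal, focal, general type).

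First I would dispose of the \emph{general type} case: a group acting on a real tree with two hyperbolic isometries without common endpoint contains, by a ping-pong argument, a non-abelian free subgroup, which does not satisfy any law. (Concretely, one takes suitable powers of the two hyperbolic elements whose axes are disjoint or meet in a bounded segment, and applies the standard table-tennis lemma.) Hence the action of $\pc(G,\mathbf{s})$ on $\cone$ is not of general type. The horocyclic case is excluded because $\pc(G,\mathbf{s})$ acts \emph{transitively}, and a transitive isometric action on an unbounded real tree cannot be horocyclic (a horocyclic action has no hyperbolic elements, so it fixes an end, and the orbit of a point stays within a bounded distance of a horosphere --- it cannot be all of the tree; more simply, transitivity forces the tree to be homogeneous and, if unbounded, to contain lines, giving hyperbolic elements). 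The bounded case forces the real tree to be a point, and the lineal case forces it to be isometric to $\mathbb{R}$; in both of these sub-cases $G$ is quasi-isometric to a point or to $\mathbb{Z}$, hence elementary, hence hyperbolic, and we are done. So the only remaining possibility is that the action of $\pc(G,\mathbf{s})$ on $\cone$ is \emph{focal}.

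Now the second and main step: if the action is focal, then $G$ is a focal lacunary hyperbolic group in the sense of this section, so Theorem~\ref{thm-focal} applies and gives a topological semidirect product decomposition $G = H \rtimes \mathbb{Z}$ or $G = H \rtimes \mathbb{R}$ where $t = 1$ acts on $H$ by a compacting automorphism $\alpha$. The point to exploit is that, since $G$ satisfies a law, so does the closed subgroup $H$. A compacting automorphism $\alpha$ of $H$ has a compact pointwise vacuum set $V$: every $h\in H$ eventually lands in $V$ under iteration of $\alpha$. I claim this forces $H$ to be compact when $H$ satisfies a law. Indeed, a focal hyperbolic group $G = H \rtimes \mathbb{Z}$ (resp.\ $H \rtimes \mathbb{R}$) with $H$ non-compact is precisely (by Theorem~\ref{thm-ccmt-intro}) a non-elementary amenable hyperbolic LC-group, and such a $G$ \emph{is} hyperbolic --- which is exactly the conclusion we want. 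So in fact there is nothing more to prove here: once Theorem~\ref{thm-focal} produces the compacting semidirect decomposition, the group $G$ is by construction a focal hyperbolic group if $H$ is non-compact (by the implication $(iii)\Rightarrow(i)$ of Theorem~\ref{thm-ccmt-intro}), and it is elementary hyperbolic if $H$ is compact (then $G$ is compact-by-$\mathbb{Z}$ or compact-by-$\mathbb{R}$, hence two-ended or one-ended elementary). Either way $G$ is hyperbolic.

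To summarize the skeleton: (1) $\pc(G,\mathbf{s})$ satisfies the law $w$; (2) its transitive action on the real tree $\cone$ cannot be of general type (ping-pong/free subgroup) nor horocyclic (transitivity); (3) in the bounded and lineal cases $G$ is elementary hyperbolic; (4) in the focal case, Theorem~\ref{thm-focal} gives $G = H \rtimes \mathbb{Z}$ or $H\rtimes\mathbb{R}$ with compacting action, and then Theorem~\ref{thm-ccmt-intro} identifies $G$ as a focal (if $H$ non-compact) or elementary (if $H$ compact) hyperbolic group. I expect the only genuinely delicate point to be step~(2): making precise why a transitive isometric action of a group satisfying a law on an unbounded real tree must be focal (or lineal) --- i.e.\ ruling out the horocyclic and general-type cases --- and in particular being careful that ``general type implies a free subgroup'' really does work at the level of isometries of a real tree with the weak separation one gets from the classification; everything downstream is a direct invocation of Theorems~\ref{thm-focal} and~\ref{thm-ccmt-intro}.
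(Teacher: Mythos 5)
Your proposal is correct and follows essentially the same route as the paper: pass the law to $\pc(G,\mathbf{s})$, use the classification of its transitive action on the real tree $\cone$ to exclude the horocyclic and general-type cases (the latter via a free subgroup), handle the point and line cases as elementary hyperbolic (the line case via the lemma that a cone quasi-isometric to $\mathbb{R}$ forces a cobounded infinite cyclic subgroup), and in the focal case invoke Theorem~\ref{thm-focal}. The only cosmetic difference is that you make explicit the final appeal to the implication $(iii)\Rightarrow(i)$ of Theorem~\ref{thm-ccmt-intro} to pass from the compacting semidirect product decomposition to hyperbolicity, which the paper leaves implicit.
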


\begin{proof}
Let $\cone$ be an asymptotic cone of $G$ that is a real tree. Note that since the group $G$ satisfies a law, the same holds for the group $\pc(G,\mathbf{s})$. Clearly we can assume that $\cone$ is not a point. If $\cone$ is a line, then by Lemma \ref{lem-cone-lineal} the group $G$ is elementary hyperbolic. So we may assume that $\cone$ is not a line, and it follows that the action of $\pc(G,\mathbf{s})$ on $\cone$ is either focal or of general type. But it cannot be of general type, because otherwise this would imply that $\pc(G,\mathbf{s})$ contains a non-abelian free subgroup \cite[Theorem 2.7]{Cul-Mor}, which is a contradiction with the fact that $\pc(G,\mathbf{s})$ satisfies a law. Therefore the action of $\pc(G,\mathbf{s})$ on $\cone$ is focal, and it follows from Theorem \ref{thm-focal} that $G$ is a focal hyperbolic group.
\end{proof}

\begin{rmq} \label{rmq-connected-lac-hyp}
Since the properties of being lacunary hyperbolic and of being hyperbolic are invariant under quasi-isometries, Proposition \ref{prop-lac-law-hyp} still holds for groups quasi-isometric to a group satisfying a law.
\end{rmq}

Although it is not stated explicitly in these terms, the following result can be derived from the work of Drutu and Sapir. For an introduction to the concept of tree-graded spaces, we refer the reader to \cite{DS}.

\begin{prop}[Drutu-Sapir] \label{prop-DS-reform}
Let $G$ be a locally compact compactly generated group satisfying a law. If $\mathcal{C} = \cone$ is an asymptotic cone of $G$ with cut-points, then $\mathcal{C}$ must be a real tree.
\end{prop}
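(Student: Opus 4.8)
The plan is to invoke the structure theory of tree-graded spaces developed by Drutu and Sapir, together with the hypothesis that $G$ satisfies a law. Recall from \cite{DS} that any asymptotic cone $\mathcal{C}$ of a finitely generated group with cut-points is tree-graded with respect to a collection of \emph{pieces}, which are proper subsets that are either points or geodesic subsets containing no cut-point of $\mathcal{C}$; moreover the transitive isometric action of the group $\pc(G,\mathbf{s})$ on $\mathcal{C}$ permutes the pieces. The same construction goes through verbatim when $G$ is a compactly generated locally compact group, since the only inputs are that $\mathcal{C}$ is a complete geodesic homogeneous metric space with cut-points. So the first step is simply to record that $\mathcal{C}$ is tree-graded with respect to a $\pc(G,\mathbf{s})$-invariant collection $\mathcal{P}$ of pieces, and that by homogeneity we may assume $(e)^\omega$ lies in a piece $P_0 \in \mathcal{P}$ (or is a cut-point itself, in which case every point is a cut-point and the $\mathcal{P}$ consists of points, giving directly that $\mathcal{C}$ is a real tree).

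The second step is to show that each piece is a single point; once this is established, a tree-graded space whose pieces are all points is exactly a real tree, which finishes the proof. The key leverage here is the law: since $G$ satisfies a reduced word $w$, so does $\pc(G,\mathbf{s})$, and hence so does the stabilizer in $\pc(G,\mathbf{s})$ of any piece $P$, acting on $P$. The relevant result of Drutu--Sapir (this is essentially the content of the proof of \cite[Theorem 6.12]{DS}) is that a group satisfying a law cannot act on an infinite geodesic metric space with the kind of ``unconstricted'' or ``wide'' behaviour that a non-trivial piece would force — more concretely, if $P$ were not a point then by homogeneity $P$ would be an unbounded geodesic space without cut-points on which a group satisfying a law acts transitively (via the subgroup of $\pc(G,\mathbf{s})$ stabilizing $P$ setwise and acting coboundedly), and Drutu--Sapir show such a configuration yields a free subsemigroup or otherwise contradicts the law via their growth/divergence argument. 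So I would isolate the statement: \emph{if a group satisfying a law acts transitively by isometries on a geodesic metric space $P$ with no cut-point, then $P$ is bounded}, cite it to \cite{DS}, and apply it to each piece. Combined with tree-gradedness, all pieces are points, so $\mathcal{C}$ is a real tree.

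The main obstacle, and the place where care is needed, is that the results in \cite{DS} are phrased for finitely generated discrete groups and for asymptotic cones built from Cayley graphs, whereas here $G$ is locally compact and $\mathcal{C}$ is (a bi-Lipschitz class of) the homogeneous space $\pc(G,\mathbf{s})/\mathrm{Sublin}^\omega(G,\mathbf{s})$. One must check that the tree-graded decomposition and the associated piece-stabilizer action are still available in this generality. This is not hard: Drutu--Sapir's construction of the pieces (as maximal subsets without cut-points, or equivalently the classes of a relation built from the cut-point structure) uses only the metric geometry of $\mathcal{C}$, not any group structure, and the fact that $\pc(G,\mathbf{s})$ permutes the pieces is automatic since it acts by isometries. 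Passing to a bi-Lipschitz-equivalent metric (coming from a change of compact generating set) does not affect whether $\mathcal{C}$ is a real tree, so one may work with any representative. Thus the only genuine citation-level claim is the rigidity statement for transitive isometric actions of law-satisfying groups on cut-point-free geodesic spaces, which is exactly the technical heart of \cite[Theorem 6.12]{DS} and which the authors here are entitled to quote.
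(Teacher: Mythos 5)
Your first step (cut-points imply $\mathcal{C}$ is tree-graded with respect to a collection of proper pieces, and this carries over verbatim to the locally compact setting since only the metric geometry of $\mathcal{C}$ is used) is exactly the paper's first step, via \cite[Lemma 2.31]{DS}. The second step, however, has a genuine gap. The statement you propose to isolate and cite --- \emph{if a group satisfying a law acts transitively by isometries on a geodesic metric space without cut-points, then that space is bounded} --- is false: $\mathbb{R}^2$ acting on itself by translations is a transitive isometric action of an abelian group on an unbounded geodesic space with no cut-points. This is not a technicality; it is precisely why the Drutu--Sapir mechanism is \emph{not} a piece-by-piece boundedness argument. In addition, your reduction to such a statement presupposes that the setwise stabilizer in $\pc(G,\mathbf{s})$ of a piece $P$ acts transitively (or at least coboundedly) on $P$, which you do not justify and which does not follow formally from transitivity on $\mathcal{C}$ (a point of $\mathcal{C}$ may lie in several pieces, so an isometry carrying $x\in P$ to $y\in P$ need only carry $P$ to \emph{some} piece through $y$).

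What the paper actually does at this point is global rather than local: assuming $\mathcal{C}$ is tree-graded with proper pieces but not a real tree, it applies \cite[Proposition 6.9]{DS} to the transitive isometric action of $\pc(G,\mathbf{s})$ on the tree-graded space $\mathcal{C}$, which produces a non-abelian free subgroup of $\pc(G,\mathbf{s})$; since $\pc(G,\mathbf{s})$ inherits the law satisfied by $G$, this is an immediate contradiction. The growth/divergence estimates you allude to from the proof of \cite[Theorem 6.12]{DS} belong to a later stage of their argument (ruling out the focal case once the cone is already known to be a real tree --- the analogue here is Lemma \ref{lem-focal-unim}), not to the step you are trying to prove. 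To repair your write-up, replace your ``isolated statement'' by the free-subgroup alternative of \cite[Proposition 6.9]{DS} and conclude directly from the law.
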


\begin{proof}
Since by assumption $\mathcal{C}$ has cut-points, it follows from Lemma 2.31 of \cite{DS} that $\mathcal{C}$ is tree-graded with respect to a collection of proper subsets. Assume by contradiction that $\mathcal{C}$ is not a real tree. Then we can apply Proposition 6.9 of \cite{DS} to the action of $\pc(G,\mathbf{s})$ on $\mathcal{C}$, and we obtain that $\pc(G,\mathbf{s})$ contains a non-abelian free subgroup. On the other hand since the group $G$ satisfies a law, $\pc(G,\mathbf{s})$ cannot contain a non-abelian free group. Contradiction.
\end{proof}

The following theorem generalizes to the realm of locally compact compactly generated groups the aforementioned result of Drutu and Sapir about finitely generated groups satisfying a law.

\begin{thm} \label{thm-law-cutpt}
Let $G$ be a locally compact compactly generated group satisfying a law. If $G$ has cut-points in one of its asymptotic cones, then $G$ is either an elementary or a focal hyperbolic group.
\end{thm}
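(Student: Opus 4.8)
The plan is to obtain this as a short consequence of Proposition~\ref{prop-DS-reform} and Proposition~\ref{prop-lac-law-hyp}, together with the classification of isometric actions on hyperbolic spaces. Let $\mathcal{C} = \cone$ be an asymptotic cone of $G$ that has cut-points. First I would apply Proposition~\ref{prop-DS-reform}: since $G$ satisfies a law, $\mathcal{C}$ must be a real tree. This is the only place where the law is genuinely needed, through the Drutu--Sapir fact that if $\pc(G,\mathbf{s})$ acts on a tree-graded space with proper pieces that is not a real tree, then $\pc(G,\mathbf{s})$ contains a non-abelian free subgroup. In particular $G$ admits one asymptotic cone that is a real tree, hence $G$ is lacunary hyperbolic.

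Next I would invoke Proposition~\ref{prop-lac-law-hyp}, which promotes ``lacunary hyperbolic and satisfying a law'' to ``hyperbolic LC-group''. Then $G$ has a well-defined type among the five types of Gromov's classification. It is not horocyclic, since hyperbolic LC-groups never are. It is not of general type either: otherwise, by Lemma~\ref{lem-sametype-cone}, the action of $\pc(G,\mathbf{s})$ on the real tree $\mathcal{C}$ would also be of general type, hence $\pc(G,\mathbf{s})$ would contain a non-abelian free subgroup by \cite[Theorem~2.7]{Cul-Mor}, contradicting that it satisfies the same law as $G$. The surviving possibilities are bounded, lineal and focal; the first two together constitute the elementary hyperbolic groups, so $G$ is either an elementary or a focal hyperbolic group, which is the assertion.

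Since the two real inputs are already in place --- Proposition~\ref{prop-DS-reform}, resting on \cite[Proposition~6.9]{DS}, and Proposition~\ref{prop-lac-law-hyp}, resting on Theorem~\ref{thm-focal} --- I do not expect any genuine obstacle; the statement is essentially a bookkeeping corollary of the machinery developed in this section. The one place a reader might pause is the passage ``having cut-points forces the cone to be a real tree'': having cut-points a priori only yields a tree-graded structure via \cite[Lemma~2.31]{DS}, and one must know the pieces are proper subsets before the free-subgroup argument of \cite{DS} can be applied --- but this is exactly what is packaged inside Proposition~\ref{prop-DS-reform}, so nothing new is required.
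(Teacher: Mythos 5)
Your proposal is correct and follows essentially the same route as the paper: Proposition~\ref{prop-DS-reform} forces the cone with cut-points to be a real tree, and then Proposition~\ref{prop-lac-law-hyp} finishes the argument. Your extra step ruling out general type (to upgrade ``hyperbolic'' to ``elementary or focal'') is a sensible and correct addition, and it is precisely the case analysis already carried out inside the paper's proof of Proposition~\ref{prop-lac-law-hyp}.
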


\begin{proof}
We let $\mathcal{C}$ be an asymptotic cone of $G$ with cut-points. Since the group $G$ satisfies a law, it follows from Proposition \ref{prop-DS-reform} that $\mathcal{C}$ is a real tree. Therefore $G$ is lacunary hyperbolic, and the conclusion then follows from Proposition \ref{prop-lac-law-hyp}.
\end{proof}

Since the property of having cut-points in one asymptotic cone is a quasi-isometry invariant, the following result follows immediately from the contrapositive of Theorem \ref{thm-law-cutpt}.

\begin{cor} \label{cor-qi-law-cutpt}
Let $G$ be a compactly generated group that is quasi-isometric to a group satisfying a law. If $G$ is not a hyperbolic group then $G$ does not have cut-points in any of its asymptotic cones.
\end{cor}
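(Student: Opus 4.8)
The plan is to argue by contraposition, exactly as the sentence preceding the statement suggests, so that the whole content reduces to combining Theorem~\ref{thm-law-cutpt} with two quasi-isometry invariance facts already available in the excerpt. First I would recall that asymptotic cones of quasi-isometric metric spaces, taken relative to the same scaling sequence $\mathbf{s}$ and non-principal ultrafilter $\omega$, are bi-Lipschitz homeomorphic; since being a cut-point is a purely topological feature, this shows that ``having a cut-point in one of its asymptotic cones'' is a quasi-isometry invariant of compactly generated locally compact groups. Second, by Remark~\ref{rmq-connected-lac-hyp} the property of being a hyperbolic LC-group is likewise a quasi-isometry invariant.

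Granting these, the argument is immediate. Suppose $G$ is compactly generated and quasi-isometric to a locally compact compactly generated group $H$ that satisfies a law, and suppose for contradiction that $G$ has a cut-point in some asymptotic cone $\mathcal{C} = \mathrm{Cone}^{\omega}(G,\mathbf{s})$. By the first invariance fact, $\mathrm{Cone}^{\omega}(H,\mathbf{s})$ is bi-Lipschitz homeomorphic to $\mathcal{C}$, hence also has a cut-point, so Theorem~\ref{thm-law-cutpt} applies to $H$ and yields that $H$ is either an elementary or a focal hyperbolic group; in both cases $H$ is a hyperbolic LC-group. By the second invariance fact, $G$ is then hyperbolic. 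The contrapositive of this implication is precisely the statement to be proved.

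I do not expect any genuine obstacle here: the corollary is a formal consequence of Theorem~\ref{thm-law-cutpt}. The only point worth a line is that one should read ``a group satisfying a law'' as a compactly generated locally compact group so that Theorem~\ref{thm-law-cutpt} is literally applicable, and this is harmless, since being quasi-isometric to the compactly generated group $G$ forces $H$ to be quasi-isometric to a compactly generated group, while satisfying a law is preserved under the identifications involved.
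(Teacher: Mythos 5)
Your proposal is correct and follows exactly the paper's route: the paper likewise deduces the corollary from the contrapositive of Theorem~\ref{thm-law-cutpt} using the quasi-isometry invariance of having cut-points in an asymptotic cone (together with the quasi-isometry invariance of hyperbolicity). Your additional remark about reading ``a group satisfying a law'' as a compactly generated locally compact group is a reasonable clarification but does not change the argument.
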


In particular since connected-by-compact locally compact groups, or compactly generated linear algebraic groups over an ultrametric local field of characteristic zero, are quasi-isometric to a solvable group, we deduce the following result.

\begin{cor} \label{cor-connected-cut}
Let $G$ be a locally compact compactly generated group. Assume that $G$ is either connected-by-compact, or a linear algebraic group over an ultrametric local field of characteristic zero. If $G$ is not a hyperbolic group then $G$ does not have cut-points in any of its asymptotic cones.
\end{cor}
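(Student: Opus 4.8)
The statement is exactly the contrapositive of ``$G$ has a cut-point in some asymptotic cone $\Rightarrow$ $G$ is hyperbolic'', and the plan is to obtain it from Corollary~\ref{cor-qi-law-cutpt}, whose hypothesis is that $G$ be quasi-isometric to a group satisfying a law. Recall that every solvable group satisfies a law: if its derived length is $n$, it satisfies the iterated commutator law $w_n$ defined by $w_0(x_1)=x_1$ and $w_{k+1}=\bigl[w_k(x_1,\dots,x_{2^k}),w_k(x_{2^k+1},\dots,x_{2^{k+1}})\bigr]$. So whenever $G$ is quasi-isometric to a compactly generated solvable group, the conclusion is immediate from Corollary~\ref{cor-qi-law-cutpt}, and the whole point is to produce, in each of the two cases, such a description of $G$ up to quasi-isometry.

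\textbf{The connected-by-compact case.} Here there is nothing left to do: Corollary~\ref{cor-qi-law} asserts precisely that a connected-by-compact locally compact group is quasi-isometric to a compactly generated solvable group, so Corollary~\ref{cor-qi-law-cutpt} applies verbatim. (At bottom this is the Iwasawa decomposition: after modding out its compact radical, a connected Lie group has a cocompact closed solvable subgroup.)

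\textbf{The $p$-adic algebraic case.} For a compactly generated linear algebraic group $G=\mathbf{G}(\mathbb{K})$ over an ultrametric local field $\mathbb{K}$ of characteristic zero I would run the same scheme, plus one extra step to absorb the possibly non-amenable part. Since $\operatorname{char}\mathbb{K}=0$ the Levi decomposition $\mathbf{G}=\mathbf{U}\rtimes\mathbf{L}$ holds, with $\mathbf{U}$ the unipotent radical and $\mathbf{L}$ reductive; writing $\mathbf{L}$ as an almost-direct product of a $\mathbb{K}$-torus and a semisimple group and discarding the $\mathbb{K}$-anisotropic (hence compact) factors, one is reduced to controlling two building blocks: the solvable radical of $G$ (assembled from the unipotent part and the split torus part) and the group $B=\mathbf{S}(\mathbb{K})$ of points of a $\mathbb{K}$-isotropic semisimple group, which acts geometrically on a thick Euclidean building $X$. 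If $B$ is trivial, $G$ is quasi-isometric to a compactly generated solvable group and we conclude as above. If $B$ is non-trivial, then every asymptotic cone of $G$ either splits off a Euclidean building of rank $\geq 2$ as a factor (when $\mathbf{S}$ has $\mathbb{K}$-rank $\geq 2$) or is a non-trivial product of two unbounded geodesic spaces (when the solvable radical is non-compact), and in both situations it has no cut-point; the only remaining case is $\mathbf{S}$ of $\mathbb{K}$-rank one with everything else compact, in which $X$ is a tree and $G$ is a hyperbolic LC-group. Hence: if $G$ is not hyperbolic, it has no cut-point in any asymptotic cone.

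\textbf{Where the difficulty lies.} The connected-by-compact case is a one-line consequence of the already-established Corollary~\ref{cor-qi-law}, so the entire content sits in the $\mathbb{K}$-adic case, and there in two spots: the structural bookkeeping via the Levi decomposition and the structure of reductive groups over local fields (in particular checking that compact generation of $G$ lets one absorb the non-compactly-generated unipotent part), and the geometric input that asymptotic cones of thick Euclidean buildings of rank $\geq 2$, and non-trivial products of unbounded spaces, have no cut-points. I expect this last point to be the real obstacle: one cannot simply assert that such an algebraic group is quasi-isometric to a solvable group and quote Corollary~\ref{cor-qi-law-cutpt}, since, for instance, $\mathrm{SL}_3(\mathbb{K})$ is non-amenable and hence quasi-isometric to no solvable group, yet (being of rank two) has no cut-point in any asymptotic cone — it is exactly the semisimple $\mathbb{K}$-isotropic part that has to be handled by the building-theoretic argument rather than by Corollary~\ref{cor-qi-law-cutpt}.
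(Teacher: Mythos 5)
Your connected-by-compact case is exactly the paper's argument. The $p$-adic case, however, goes wrong at the outset: the claim that $\mathrm{SL}_3(\mathbb{K})$, being non-amenable, ``is quasi-isometric to no solvable group'' is false. Amenability is a quasi-isometry invariant among finitely generated discrete groups, but not among locally compact ones: a closed cocompact subgroup is always quasi-isometric to the ambient group, and the Borel subgroup $B(\mathbb{K})$ of upper triangular matrices is solvable and cocompact in $\mathrm{SL}_3(\mathbb{K})$, since the flag variety $(G/B)(\mathbb{K})$ is compact. (The paper itself points at this phenomenon with $\mathbb{R}\rtimes\mathbb{R}$ sitting cocompactly inside $\mathrm{SL}_2(\mathbb{R})$.) This is precisely the fact the paper invokes: for any compactly generated linear algebraic group over an ultrametric local field of characteristic zero, a minimal parabolic $P=M\ltimes U$ is cocompact, and inside $P(\mathbb{K})$ the subgroup $S(\mathbb{K})U(\mathbb{K})$ (maximal split torus times unipotent radical) is solvable and cocompact, because anisotropic reductive groups over such fields have compact groups of rational points; being closed and cocompact in a compactly generated group, it is itself compactly generated. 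So $G$ is quasi-isometric to a compactly generated solvable group in the ultrametric case too, and the paper's proof of Corollary \ref{cor-connected-cut} is the same one-line application of Corollary \ref{cor-qi-law-cutpt} in both cases.

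Your substitute building-theoretic argument is therefore unnecessary, and as written it also has a genuine gap: in the case where the $\mathbb{K}$-isotropic semisimple part has rank one but the solvable radical is non-compact (e.g.\ $\mathbb{Q}_p^2\rtimes\mathrm{SL}_2(\mathbb{Q}_p)$), you assert that every asymptotic cone is a non-trivial product of two unbounded geodesic spaces, but the Levi decomposition is a semidirect, not direct, product, and you give no argument that the cone splits metrically. The parabolic-subgroup reduction disposes of all these cases uniformly.
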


Note that by Corollary $3$ of \cite{CT11}, we have a complete description of connected Lie groups or linear algebraic groups over a non-Archimedean local field of characteristic zero that are non-elementary hyperbolic. For example in the case of a connected Lie group, $G$ is either isomorphic to a semidirect product $N \rtimes (K \times \mathbb{R})$, where $N$ is a simply connected nilpotent Lie group, $K$ is a compact connected Lie group and the action of $\mathbb{R}$ on $N$ is contracting; or the quotient of $G$ by its maximal compact normal subgroup is isomorphic to a rank one simple Lie group with trivial center. So it follows from Corollary \ref{cor-connected-cut} that if a connected Lie group is not of this form, then it does not have cut-points in any of its asymptotic cones. %A very similar statement holds for algebraic groups over a non-Archimedean local field of characteristic zero.

\begin{rmq}
Here is another proof of Corollary \ref{cor-connected-cut} when $G$ is a connected-by-compact locally compact group. Argue by contradiction and assume that $G$ admits one asymptotic cone $\mathcal{C}$ with cut-points. Since $G$ is quasi-isometric to a solvable group, according to Proposition \ref{prop-DS-reform} the asymptotic cone $\mathcal{C}$ must be a real tree. Now since connected-by-compact groups are compactly presented (see for example \cite[Proposition 8.A.16]{Cor-dlH}), the group $G$ must be hyperbolic by Corollary \ref{cor-cp-lac} below. Contradiction.
\end{rmq}

\section{Structural results for locally compact lacunary hyperbolic groups} \label{sec-gen-type}

\subsection{Identity component in lacunary hyperbolic groups} 

Recall that a locally compact compactly generated group $G$ is \textit{lacunary hyperbolic of general type} if it admits one asymptotic cone $\cone$ that is a real tree and such that the action of $\pc(G,\mathbf{s})$ on $\cone$ is of general type. It turns out that, apart from the case of hyperbolic LC-groups, every lacunary hyperbolic group is of general type. This will be proved in Theorem \ref{thm-typegen} below.

It is proved in \cite[Proposition 6.1]{DS} that if a finitely generated group $G$ has one asymptotic cone that is a line, then $G$ is virtually infinite cyclic. The following lemma is an extension of this result to coarsely connected metric groups. In particular it encompasses the case of a closed compactly generated subgroup $H$ of a locally compact compactly generated group $G$, where $H$ is endowed with the induced word metric from $G$.

\begin{lem} \label{lem-cone-lineal}
Let $(\Gamma,d)$ be a group equipped with a coarsely connected left-invariant metric. If $(\Gamma,d)$ admits one asymptotic cone that is quasi-isometric to the real line, then $\Gamma$ admits an infinite cyclic cobounded subgroup.
\end{lem}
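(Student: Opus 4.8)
My plan is to use the transitive isometric action of $\pc(\Gamma,\mathbf{s})$ on the asymptotic cone $\mathcal{C}$ quasi-isometric to $\mathbb{R}$ in order to extract a hyperbolic isometry of $\mathcal{C}$ that is realised by a sequence $(h_n)$ of elements of $\Gamma$, and then to show that for $\omega$-almost every $n$ the \emph{honest} cyclic subgroup $\langle h_n\rangle$ is already cobounded in $(\Gamma,d)$; selecting any single such $n$ then produces the required subgroup. Throughout, $\mathcal{C}=\mathrm{Cone}^{\omega}(\Gamma,d,\mathbf{s})$ denotes the cone in question and $o=(e)^{\omega}$ its base point. Note that if $\Gamma$ were bounded then $\mathcal{C}$ would be a point, so $\Gamma$ is unbounded.

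\textbf{Step 1 (the action on $\mathcal{C}$).} Coarse connectedness makes $\mathcal{C}$ a geodesic metric space, and since it is quasi-isometric to $\mathbb{R}$ it is $\delta$-hyperbolic and has exactly two boundary points. The group $\pc(\Gamma,\mathbf{s})$ acts transitively, hence coboundedly (indeed minimally, after passing to the real tree picture), by isometries on $\mathcal{C}$; applying Gromov's classification to this action, it is not bounded (as $\mathcal{C}$ is unbounded), it is neither focal nor of general type (these force at least three boundary points), and it is not horocyclic (a cobounded action on a quasi-line has a hyperbolic element — two ``coarse reflections'' compose to a hyperbolic isometry). So the action is lineal. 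In particular $\pc(\Gamma,\mathbf{s})$ contains a hyperbolic isometry $p$, which we write as $p=(h_n)^{\omega}$ for a sequence $(h_n)$ in $\Gamma$; using transitivity we may move $o$ onto the quasi-axis of $p$ (replacing $p$ by a conjugate in $\pc(\Gamma,\mathbf{s})$), and then, the action being lineal and cobounded, the orbit $\langle p\rangle\cdot o$ is $C$-coarsely dense in $\mathcal{C}$ for some constant $C>0$.

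\textbf{Step 2 (descending to $\Gamma$).} Fix $M>C$. The key assertion is that \emph{for $\omega$-almost every $n$ the subgroup $\langle h_n\rangle$ is $Ms_n$-cobounded in $(\Gamma,d)$}, i.e.\ $\Gamma=\langle h_n\rangle\cdot B_{\Gamma}(Ms_n)$. Granting this, choose any $n_0$ in the corresponding $\omega$-conull set: then $\langle h_{n_0}\rangle$ is cobounded in $\Gamma$, and since $\Gamma$ is unbounded while balls are bounded, $\langle h_{n_0}\rangle$ is infinite; this is the desired subgroup. To prove the assertion I argue by contradiction: if it fails, then for $\omega$-almost every $n$ there is $g_n\in\Gamma$ with $d(g_n,\langle h_n\rangle)>Ms_n$ (set $g_n=e$ otherwise). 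Along a coarse-connectedness chain from $e$ to $g_n$ the function $i\mapsto d(x_i,\langle h_n\rangle)$ varies by at most the coarse-connectedness constant at each step, goes from $0$ to more than $Ms_n$, and therefore attains a value of size $\approx Ms_n$ at some chain point $y_n$. Translating $y_n$ back by a suitable power of $h_n$ — which is an isometry preserving $\langle h_n\rangle$, hence leaves the distance to $\langle h_n\rangle$ unchanged — yields an element $z_n$ with $|z_n|\lesssim Ms_n$, so that $(z_n)$ is $\mathbf{s}$-linear and defines a point of $\mathcal{C}$, while still $d(z_n,h_n^{k})\geq d(z_n,\langle h_n\rangle)\gtrsim Ms_n$ for \emph{every} $k\in\mathbb{Z}$. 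Passing to the cone, $d_{\omega}\big((z_n)^{\omega},p^{k}\cdot o\big)=\mathrm{lim}^{\omega}\,d(z_n,h_n^{k})/s_n\geq M$ for all $k$, so $(z_n)^{\omega}$ lies at distance $\geq M>C$ from the whole orbit $\langle p\rangle\cdot o$, contradicting its $C$-coarse density in $\mathcal{C}$. This proves the assertion, and with it the lemma.

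\textbf{The main obstacle.} The real difficulty, concentrated in Step 2, is that $\mathcal{C}$ only records the behaviour of $\Gamma$ along the sparse sequence $\mathbf{s}$, and the translation-type isometries of $\mathcal{C}$ are represented by sequences $(h_n)$ whose word lengths tend to infinity — so a priori no fixed cyclic subgroup of $\Gamma$ is visible in the cone at all. The device that overcomes this is the combination of coarse connectedness used as an ``intermediate value theorem'' (to manufacture a point at the exact scale $Ms_n$ out of a point that is merely far from $\langle h_n\rangle$) with the translation trick that moves that point back into the band of scales the cone can see without disturbing its distance to the cyclic subgroup; this is precisely what upgrades the purely asymptotic density of $\langle p\rangle\cdot o$ in $\mathcal{C}$ into genuine coboundedness of one $\langle h_n\rangle$ at every scale. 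The remaining ingredients — that $\pc(\Gamma,\mathbf{s})$ acts transitively with lineal type in this generality, and that the orbit of a hyperbolic element is coarsely dense in a cobounded lineal action on a quasi-line — are routine.
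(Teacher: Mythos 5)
Your proof is correct and follows essentially the same route as the paper: extract a hyperbolic element $(h_n)^{\omega}$ of $\pc(\Gamma,d,\mathbf{s})$ whose cyclic orbit is coarsely dense in the cone, then show by contradiction that $\omega$-almost surely the honest subgroup $\langle h_n\rangle$ is $O(s_n)$-cobounded in $\Gamma$, using coarse connectedness to produce a witness at the visible scale and left-translation by a power of $h_n$ to normalize it into an $\mathbf{s}$-linear sequence. The paper's proof is exactly this argument (with $\gamma_n$ in place of $h_n$ and $(\ell+1)s_n$ in place of $Ms_n$), so no further comparison is needed.
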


\begin{proof}
If $\mathcal{C} = \mathrm{Cone}^{\omega}(\Gamma,d,\mathbf{s})$ is an asymptotic cone of $(\Gamma,d)$ that is quasi-isometric to the real line, the action of $\pc(\Gamma,d,\mathbf{s})$ on $\mathcal{C}$ is lineal. Therefore $\pc(\Gamma,d,\mathbf{s})$ contains some hyperbolic element $\gamma = (\gamma_n)$, and there exists $\ell > 0$ such that the $\ell$-neighbourhood of the $\left\langle \gamma \right\rangle$-orbit of the point $(e)^{\omega}$ is the entire $\mathcal{C}$.

For every $n \geq 1$, we let $\Gamma_n$ be the subgroup of $\Gamma$ generated by $\gamma_n$. We claim that \mbox{$\omega$-almost} surely, $\Gamma$ is contained in the $(\ell+1)s_n$-neighbourhood of $\Gamma_n$. Let us argue by contradiction and assume that \mbox{$\omega$-almost} surely there exists $x_n \in \Gamma$ such that $d(x_n,\Gamma_n) \geq (\ell +1)s_n$. Since $(\Gamma,d)$ is coarsely connected, we can assume that $d(x_n,\Gamma_n) \leq (\ell +1)s_n + c$ for some constant $c>0$. Upon multiplying $x_n$ on the left by an element of $\Gamma_n$, we can moreover assume that $d(x_n,\Gamma_n) = d(x_n,e)$, which implies that the sequence $(x_n)$ defines a point $x \in \mathcal{C}$. But by construction, \mbox{$\omega$-almost} surely $d(x_n,\gamma_n^i) \geq (\ell +1)s_n$ for every $i \in \mathbb{Z}$, so the point $x$ is at distance at least $(\ell +1)$ from any point in the $\left\langle \gamma \right\rangle$-orbit of the point $(e)^{\omega}$. Contradiction.
\end{proof}

\begin{thm} \label{thm-typegen}
Let $G$ be a locally compact lacunary hyperbolic group. Then exactly one of the following holds:
\begin{enumerate}[label=(\alph*)]
\item $G$ is either an elementary or a focal hyperbolic group;
\item for every asymptotic cone $\cone$ that is a real tree, the action of $\pc(G,\mathbf{s})$ on $\cone$ is of general type.
\end{enumerate}
\end{thm}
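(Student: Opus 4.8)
The plan is to run the classification of isometric actions on real trees on the natural \emph{transitive} action of $\pc(G,\mathbf{s})$ on a real-tree asymptotic cone, and to check that the lineal and focal possibilities both force $G$ into alternative (a).

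First I would dispose of the trivial case: if $G$ is compact it is an elementary hyperbolic group and all its asymptotic cones are points, so (a) holds while (b) is vacuous. Assume from now on that $G$ is non-compact. Then, being compactly generated, $(G,d_S)$ is coarsely connected and unbounded, so by Lemma \ref{lem-cone-unb} every asymptotic cone of $G$ is unbounded; in particular no asymptotic cone that is a real tree is reduced to a point. Now fix an arbitrary asymptotic cone $\mathcal{C} = \cone$ that is a real tree. The action of $\pc(G,\mathbf{s})$ on $\mathcal{C}$ is transitive, hence minimal (a non-empty invariant subtree must meet, and therefore contain, a full orbit), so by the properties of minimal actions on real trees recalled in Section \ref{sec-preli} it is neither bounded --- as $\mathcal{C}$ is not a point --- nor horocyclic. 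Therefore it is lineal, focal, or of general type.

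Next, assuming that $G$ is neither an elementary nor a focal hyperbolic group, I would rule out the first two types. If the action is lineal, then minimality forces $\mathcal{C}$ to be isometric to the real line, so Lemma \ref{lem-cone-lineal} applied to the coarsely connected metric group $(G,d_S)$ yields an infinite cyclic cobounded subgroup of $G$; hence $G$ is quasi-isometric to $\mathbb{Z}$, so $G$ is two-ended and thus an elementary hyperbolic LC-group --- a contradiction. If the action is focal, then $G$ is by definition a focal lacunary hyperbolic group, so Theorem \ref{thm-focal} together with Theorem \ref{thm-ccmt-intro} shows that $G$ is a focal hyperbolic group --- again a contradiction. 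Hence the action of $\pc(G,\mathbf{s})$ on $\mathcal{C}$ is of general type, and since $\mathcal{C}$ was an arbitrary real-tree asymptotic cone, (b) holds. Finally (a) and (b) are mutually exclusive: if $G$ is an elementary or a focal hyperbolic group it is in particular a hyperbolic LC-group, so all its asymptotic cones are real trees, and by Lemma \ref{lem-sametype-cone} the action of $\pc(G,\mathbf{s})$ on each of them has the same type (bounded, lineal, or focal) as the $G$-action on its Cayley graph, never of general type, so (b) fails.

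I expect essentially all of the real content to sit in the earlier Theorem \ref{thm-focal}; the present argument is a bookkeeping pass through the five types of actions. The only two small points that need a little care are in the lineal case: first, that minimality of the transitive $\pc(G,\mathbf{s})$-action pins $\mathcal{C}$ down to the real line, so that Lemma \ref{lem-cone-lineal} is applicable, and second, that a cobounded infinite cyclic subgroup of a compactly generated LC-group indeed makes it two-ended, hence elementary hyperbolic.
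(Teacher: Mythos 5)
Your proof is correct and follows essentially the same route as the paper: the paper runs the same case analysis, using homogeneity of the cone (point, line, or branching everywhere) where you use minimality of the transitive action, and then invokes Lemma \ref{lem-cone-lineal} for the lineal case and Theorem \ref{thm-focal} for the focal case exactly as you do. Your explicit verification that (a) and (b) are mutually exclusive via Lemma \ref{lem-sametype-cone} is a small addition the paper leaves implicit, but it is routine.
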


\begin{proof}
Let $\mathcal{C} = \cone$ be an asymptotic cone of $G$ that is a real tree. By homogeneity $\mathcal{C}$ can be either a point, a line, or such that every point is branching with the same branching cardinality. The case when $\mathcal{C}$ is a point is trivial, as it easily implies that the group $G$ is compact. If $\mathcal{C}$ is a line then $G$ must have an infinite cyclic discrete and cocompact subgroup by Lemma \ref{lem-cone-lineal}. So we may assume that $\mathcal{C}$ is neither a point nor a line. This implies that if the action of $\pc(G,\mathbf{s})$ on $\mathcal{C}$ is not of general type, then it is focal, and by Theorem \ref{thm-focal} this implies that $G$ is focal hyperbolic.
\end{proof}

We now aim to establish some structural results about locally compact lacunary hyperbolic groups. Since any topological group naturally lies into an extension with a connected kernel and a totally disconnected quotient, it is natural to wonder what can be said about the identity component of a locally compact lacunary hyperbolic group. Recall that even for hyperbolic LC-groups, it may happen that the identity component is neither compact nor cocompact. Take for example the semidirect product $(\mathbb{R} \times \mathbb{Q}_p) \rtimes \mathbb{Z}$, where the action of $\mathbb{Z}$ is by multiplication by $1/2$ on $\mathbb{R}$ and by $p$ on $\mathbb{Q}_p$. However, if $G$ is a hyperbolic LC-group of general type, it follows from \cite[Proposition 5.10]{CCMT} that the identity component of $G$ is either compact or cocompact. We will extend this result to lacunary hyperbolic groups in Theorem \ref{thm-composante-typegen} below. 

Recall that if $G$ is a locally compact group, the Braconnier topology is a Hausdorff topology on the group $\mathrm{Aut}(G)$ of topological automorphisms of $G$. For an introduction to this topology, see for example \cite[Appendix I]{CaMo}. 

\begin{lem} \label{lem-grth}
Let $G$ be a $\sigma$-compact locally compact group, and $N \triangleleft G$ a closed normal subgroup with trivial center and finite outer automorphism group. Assume moreover that the group $\mathrm{Inn}(N)$ of inner automorphisms of $N$ is closed in $\mathrm{Aut}(N)$. Then $G$ has a finite index open subgroup that is topologically isomorphic to the direct product of $N$ with its centralizer in $G$. 
\end{lem}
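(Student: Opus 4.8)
The plan is to analyse the conjugation action of $G$ on $N$, use it to exhibit a finite index open subgroup that is an internal direct product $N\cdot Z_G(N)$, and then upgrade this algebraic decomposition to a topological one by an open mapping argument.

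First I would consider the conjugation homomorphism $\phi\colon G\to\mathrm{Aut}(N)$, which is continuous for the Braconnier topology because $N$ is a closed normal subgroup of the locally compact group $G$ (see \cite[Appendix I]{CaMo}). Its kernel is exactly the centralizer $Z:=Z_G(N)$, which is closed in $G$ and normal in $G$, being the centralizer of a normal subgroup. Since $N$ has trivial center, $\phi$ restricts to a bijection of $N$ onto $\mathrm{Inn}(N)$, and $N\cap Z=Z(N)=\{1\}$.

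Next I would locate the finite index open subgroup. The homomorphism $\phi$ induces an injection of $\phi(G)/\mathrm{Inn}(N)$ into $\mathrm{Out}(N)$, which is finite by hypothesis; hence $\mathrm{Inn}(N)$ has finite index in $\phi(G)$. Being in addition closed in $\mathrm{Aut}(N)$, the subgroup $\mathrm{Inn}(N)$ is a closed finite index subgroup of $\phi(G)$, therefore open in $\phi(G)$. Consequently $G_1:=\phi^{-1}(\mathrm{Inn}(N))$ is an open subgroup of finite index in $G$, and it contains both $N$ (as $\phi(N)=\mathrm{Inn}(N)$) and $Z$ (as $\phi(Z)=\{\mathrm{id}\}$). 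I would then check that $G_1=NZ$: for $g\in G_1$ one has $\phi(g)\in\mathrm{Inn}(N)=\phi(N)$, so $\phi(g)=\phi(n)$ for some $n\in N$, and then $n^{-1}g\in\ker\phi=Z$. Since $N$ and $Z$ commute elementwise, are both normal in $G_1$, and meet trivially, the multiplication map $m\colon N\times Z\to G_1$, $(n,z)\mapsto nz$, is a continuous bijective homomorphism.

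Finally I would upgrade $m$ to a topological isomorphism. As an open (hence closed) subgroup of the $\sigma$-compact group $G$, the group $G_1$ is $\sigma$-compact; likewise $N$ and $Z$ are closed in $G$, so $N\times Z$ is a $\sigma$-compact locally compact group. The open mapping theorem for locally compact groups then shows that the continuous surjection $m$ from the $\sigma$-compact group $N\times Z$ onto the locally compact group $G_1$ is open, hence a topological isomorphism, giving $G_1\cong N\times Z_G(N)$. The bulk of the argument is bookkeeping around the exact sequence $1\to\mathrm{Inn}(N)\to\mathrm{Aut}(N)\to\mathrm{Out}(N)\to 1$ and the identity $\ker\phi=Z_G(N)$; the step I expect to need the most care is verifying the hypotheses — $\sigma$-compactness of the source and local compactness of the target — under which the open mapping theorem applies, together with the continuity of $\phi$ for the Braconnier topology, which is exactly where the closedness of $N$ in $G$ is used.
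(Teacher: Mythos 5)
Your proposal is correct and follows essentially the same route as the paper: continuity of the conjugation map $G\to\mathrm{Aut}(N)$, identification of $NZ_G(N)$ as the (closed, finite index, hence open) preimage of $\mathrm{Inn}(N)$, and the open mapping theorem for $\sigma$-compact locally compact groups to upgrade the continuous bijection $N\times Z_G(N)\to NZ_G(N)$ to a topological isomorphism. You spell out a couple of steps the paper leaves implicit (e.g.\ the verification that the preimage equals $NZ_G(N)$), but the argument is the same.
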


\begin{proof}
If we let $C$ be the centralizer of $N$ in $G$, we want to prove that the subgroup $N C$ is open in $G$, has finite index and is topologically the direct product of $N$ and $C$. Since $N$ is a closed normal subgroup of $G$, the action of $G$ by conjugation on $N$ yields a continuous map $G \rightarrow \mathrm{Aut}(N)$ \cite[Theorem 26.7]{HR}. Being the preimage of the closed finite index subgroup $\mathrm{Inn}(N)$ of $\mathrm{Aut}(N)$ under this map, the subgroup $N C$ is a closed finite index (and hence open) subgroup of $G$. It follows that $NC$ is a $\sigma$-compact locally compact group, and we deduce that the natural epimorphism $N \times C \rightarrow NC$ is a quotient morphism between topological groups. Since it is clearly onto, and injective because $N$ has trivial center, it is an isomorphism of topological groups. 
\end{proof}

\begin{thm} \label{thm-composante-typegen}
Let $G$ be a locally compact lacunary hyperbolic group of general type. Then $G^\circ$ is either compact or cocompact in $G$.
\end{thm}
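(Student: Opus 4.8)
The plan is to follow the strategy of \cite[Proposition 5.10]{CCMT}, but with the dynamics of $\pc(G,\mathbf{s})$ on the real tree $\mathcal{C} = \cone$ (on which it acts of general type) playing the role of the boundary dynamics of a hyperbolic group. First I would note that $G^\circ$ is a connected, hence compactly generated and $\sigma$-compact, closed normal subgroup of $G$. Since $G^\circ \triangleleft G$, Lemma~\ref{lem-normal} tells us that the action of $\pc_G(G^\circ,\mathbf{s})$ on $\mathcal{C}$ is either bounded or of general type. If it is bounded, Lemma~\ref{lem-comp-fixpt} yields that $G^\circ$ is compact; so from now on I assume this action is of general type, and the goal is to prove that $G^\circ$ is cocompact.

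The heart of the argument is a reduction forcing $G^\circ$ to be a connected semisimple Lie group with trivial center. Modding out by the compact radical $K$ of $G^\circ$ (Proposition~\ref{prop-radcomp-connec}), which is characteristic in $G^\circ$ hence normal in $G$, one checks using Lemma~\ref{lem-cone-sametype} that $G/K$ is again lacunary hyperbolic of general type, that $(G/K)^\circ = G^\circ/K$, and that the conclusion for $G/K$ transfers back to $G$; moreover $(G/K)^\circ$ is still non-compact, so by Lemmas~\ref{lem-normal} and~\ref{lem-comp-fixpt} the action of $\pc_{G/K}((G/K)^\circ,\mathbf{s})$ stays of general type. Hence I may replace $G$ by $G/K$ and assume $G^\circ$ has no nontrivial compact normal subgroup. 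Now let $R$ be the solvable radical of the connected Lie group $G^\circ$: it is characteristic in $G^\circ$, hence normal in $G$, and being connected solvable it satisfies a law, so $\pc_G(R,\mathbf{s})$ does as well. By Lemma~\ref{lem-normal} its action on $\mathcal{C}$ is bounded or of general type, and the latter is impossible since it would force $\pc_G(R,\mathbf{s})$ to contain a non-abelian free subgroup (\cite[Theorem 2.7]{Cul-Mor}, as in the proof of Proposition~\ref{prop-lac-law-hyp}). So the action is bounded, whence $R$ is compact by Lemma~\ref{lem-comp-fixpt}, hence trivial; thus $G^\circ$ is semisimple. Applying the same argument to the abelian law and to the discrete, finitely generated, characteristic subgroup $Z(G^\circ)$ shows that $Z(G^\circ)$ is compact, i.e.\ finite, hence trivial. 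So $G^\circ$ is a connected semisimple Lie group with trivial center.

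Such a group has trivial center, finite outer automorphism group, and closed inner automorphism group inside $\mathrm{Aut}(G^\circ)$ (standard facts for semisimple Lie groups), and $G$ is $\sigma$-compact, so Lemma~\ref{lem-grth} produces a finite-index open subgroup $G_1 \leq G$ topologically isomorphic to $G^\circ \times C$, where $C = Z_G(G^\circ)$; it then remains to see that $C$ is compact. Being finite index, $G_1$ is quasi-isometric to $G$, so $\mathrm{Cone}^{\omega}(G_1,\mathbf{s})$ is bi-Lipschitz homeomorphic to the real tree $\mathcal{C}$; on the other hand, using a product of compact generating sets, $\mathrm{Cone}^{\omega}(G_1,\mathbf{s})$ is bi-Lipschitz homeomorphic to the direct product $\mathrm{Cone}^{\omega}(G^\circ,\mathbf{s}) \times \mathrm{Cone}^{\omega}(C,\mathbf{s})$. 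A real tree is uniquely arcwise connected and therefore contains no bi-Lipschitz image of a non-degenerate Euclidean square, so one of these two factors must be a single point; since $G^\circ$ is non-compact and coarsely connected, $\mathrm{Cone}^{\omega}(G^\circ,\mathbf{s})$ is unbounded by Lemma~\ref{lem-cone-unb}, hence $\mathrm{Cone}^{\omega}(C,\mathbf{s})$ is a point. Then $\pc_{G_1}(C,\mathbf{s})$ has a bounded orbit in $\mathrm{Cone}^{\omega}(G_1,\mathbf{s})$, and Lemma~\ref{lem-comp-fixpt} (with $G_1$ as ambient group and $C$, a closed compactly generated subgroup, as $H$) gives that $C$ is compact. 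Therefore $G^\circ$ is cocompact in $G_1 = G^\circ \times C$, and thus in $G$.

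The main obstacle I expect is the reduction of the second paragraph: one must verify that passing to $G/K$ genuinely preserves every hypothesis — in particular that the identity component remains non-compact, so that its action on the cone stays of general type rather than becoming bounded — and one must ensure that the subgroups fed to Lemma~\ref{lem-comp-fixpt} are compactly generated, which is why it is crucial that the solvable radical is connected and the center is finitely generated. The only other point requiring a little care is the elementary fact that a real tree is not a non-trivial direct product, which is what converts the splitting provided by Lemma~\ref{lem-grth} into the compactness of $C$.
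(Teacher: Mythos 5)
Your argument is correct and follows essentially the same route as the paper: reduce to the case where $G^\circ$ is a connected Lie group with no nontrivial compact normal subgroup, kill the solvable radical via Lemma \ref{lem-normal} and the law/free-subgroup dichotomy, split off a finite-index direct product using Lemma \ref{lem-grth}, and conclude because a real tree cannot be a nontrivial direct product. The only cosmetic difference is that the paper works with the largest (possibly non-connected) normal solvable subgroup of $G^\circ$, which absorbs the center in one stroke, whereas you dispose of $Z(G^\circ)$ in a separate step; both are fine.
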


\begin{proof} %[Proof of Theorem \ref{thm-composante-typegen}]
According to Corollary \ref{cor-red-lie} there exists a compact characteristic subgroup $W$ of $G$ contained in $G^\circ$ such that $G^{\circ} / W$ is a connected Lie group without non-trivial compact normal subgroups. Now by Lemma \ref{lem-cone-sametype}, the group $G/W$ is lacunary hyperbolic of general type as well, so the proof can be reduced to the case when $G^\circ$ is a connected Lie group without non-trivial compact normal subgroups.

Let $\mathcal{C} = \cone$ be an asymptotic cone of $G$ that is a real tree and such that the action of $\pc(G,\mathbf{s})$ on $\mathcal{C}$ is of general type. According to Lemma \ref{lem-normal}, the action of $\pc_G(G^\circ, \mathbf{s})$ on $\mathcal{C}$ is either bounded or of general type. Since $G^\circ$ is compactly generated, if the action of $\pc_G(G^\circ, \mathbf{s})$ on $\mathcal{C}$ is bounded then $G^\circ$ is compact by Lemma \ref{lem-comp-fixpt}. So we may assume that this action is of general type and we will prove that $G^\circ$ is cocompact in $G$.

We denote by $R$ the non-connected solvable radical of $G^\circ$, that is its largest normal solvable subgroup. It is a closed, compactly generated subgroup of $G^\circ$, and being characteristic in the normal subgroup $G^\circ$, the subgroup $R$ is normal in $G$. We will prove that $R$ is reduced to the identity. For the same reason as above, the action of $\pc_G(R, \mathbf{s})$ on $\mathcal{C}$ must be either bounded or of general type. However it cannot be of general type because otherwise $\pc_G(R, \mathbf{s})$ would contain a non-abelian free subgroup (see Theorem 2.7 in \cite{Cul-Mor}), which is clearly impossible because $\pc_G(R,\mathbf{s})$ is a solvable group. Therefore the action of $\pc_G(R, \mathbf{s})$ on $\mathcal{C}$ is bounded, and by Lemma \ref{lem-comp-fixpt} this implies that $R$ is a compact subgroup. But $G^\circ$ is assumed not to contain any non-trivial compact normal subgroup, so $R$ must be trivial.

It follows that $G^\circ$ is a semisimple Lie group with trivial center, and consequently $G^\circ$ has finite outer automorphism group. So we are in position to apply Lemma \ref{lem-grth}, and we obtain that $G$ admits a finite index open subgroup decomposing as a topological direct product $G' = G^\circ \times Q$. Now since $G'$ has finite index in $G$, $\mathrm{Cone}^{\omega}(G',\mathbf{s}) \simeq \mathrm{Cone}^{\omega}(G^\circ,\mathbf{s}) \times \mathrm{Cone}^{\omega}(Q,\mathbf{s}) $ is a real tree. This implies that either $\mathrm{Cone}^{\omega}(G^\circ,\mathbf{s})$ or $\mathrm{Cone}^{\omega}(Q,\mathbf{s})$ is a point, that is either $G^\circ$ or $Q$ is compact. But by assumption $G^\circ$ is not compact so $Q$ must be compact, and the conclusion follows.
\end{proof}

As a consequence of this result, we deduce the following property for locally compact lacunary hyperbolic groups.

\begin{prop} \label{prop-hyp-co}
If $G$ is a locally compact lacunary hyperbolic group, then either $G$ is hyperbolic or $G$ has a compact open subgroup.
\end{prop}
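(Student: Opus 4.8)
The plan is to feed $G$ through the general-type dichotomy and then analyse its identity component. First I would apply Theorem~\ref{thm-typegen}: a locally compact lacunary hyperbolic group $G$ either is an elementary or a focal hyperbolic group, or it is lacunary hyperbolic of general type. In the first alternative $G$ is already hyperbolic and there is nothing left to prove, so I may assume from now on that $G$ is of general type.

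Next I would invoke Theorem~\ref{thm-composante-typegen}, which tells us that $G^{\circ}$ is either compact or cocompact in $G$. If $G^{\circ}$ is compact, then $G$ is a locally compact group with compact identity component, so van Dantzig's theorem provides a compact open subgroup, which is the second conclusion of the statement. It remains to treat the case where $G^{\circ}$ is cocompact, i.e.\ $G$ is connected-by-compact. Here I would use Corollary~\ref{cor-qi-law} to realise $G$ as quasi-isometric to a compactly generated solvable group, which in particular satisfies a law. Since both lacunary hyperbolicity and hyperbolicity are quasi-isometry invariants, Proposition~\ref{prop-lac-law-hyp}, in the quasi-isometry-invariant form recorded in Remark~\ref{rmq-connected-lac-hyp}, then applies and shows that $G$ is hyperbolic.

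The argument is essentially a bookkeeping exercise assembling results already proved: the substantive inputs are Theorem~\ref{thm-composante-typegen} (control of $G^{\circ}$ in the general-type case) and the law-rigidity of Proposition~\ref{prop-lac-law-hyp}. I do not expect any genuine obstacle; the only point worth checking carefully is that the case split is exhaustive --- the general-type hypothesis of Theorem~\ref{thm-composante-typegen} is precisely what Theorem~\ref{thm-typegen} leaves once the hyperbolic case is set aside, and "$G^{\circ}$ compact or cocompact" then leads respectively to the two alternatives "$G$ has a compact open subgroup" and "$G$ is hyperbolic".
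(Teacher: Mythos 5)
Your proof is correct and follows essentially the same route as the paper: the paper's own argument also splits off the elementary/focal hyperbolic cases (via Lemma~\ref{lem-cone-lineal} and Theorem~\ref{thm-focal}, which together are exactly Theorem~\ref{thm-typegen}), then applies Theorem~\ref{thm-composante-typegen} and concludes with van Dantzig in the compact case and Remark~\ref{rmq-connected-lac-hyp} in the cocompact case. The only cosmetic difference is that you unpack Remark~\ref{rmq-connected-lac-hyp} into its ingredients (Corollary~\ref{cor-qi-law} plus Proposition~\ref{prop-lac-law-hyp}), which the paper leaves implicit.
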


\begin{proof}
According to Lemma \ref{lem-cone-lineal}, if $G$ is not an elementary hyperbolic LC-group, then $G$ must be either focal lacunary hyperbolic or lacunary hyperbolic of general type. If $G$ is focal lacunary hyperbolic then $G$ is focal hyperbolic by Theorem \ref{thm-focal}. Now if $G$ is lacunary hyperbolic of general type, then according to Theorem \ref{thm-composante-typegen} the identity component $G^\circ$ is either compact or cocompact in $G$. In the latter case $G$ must be hyperbolic (see Remark \ref{rmq-connected-lac-hyp}), and in the former $G$ has a compact open subgroup by van Dantzig's theorem.
\end{proof}

\subsection{Characterization of lacunary hyperbolic groups}

\subsubsection{Cartan-Hadamard Theorem}

This paragraph consists of a recall of a Cartan-Hadamard type theorem due to Gromov, and its application to lacunary hyperbolic groups due to Kapovich and Kleiner, stated for topological groups rather than discrete ones. 

Let $(X,d)$ be a non-empty geodesic metric space, $x_0 \in X$ a base point and $c>0$. A $c$-loop based at $x_0$ is a sequence of points $x_0 = x_1,x_2,\ldots,x_n=x_0$ such that $d(x_i,x_{i+1}) \leq c$ for every $i=1,\ldots,n-1$. Two $c$-loops are said to be $c$-elementarily homotopic if one of them can be obtained from the other by inserting a new point, and $c$-homotopic if they are the extremities of a finite sequence of $c$-loops such that any two consecutive terms are $c$-elementarily homotopic. Recall that $X$ is $c$-large scale simply connected if any $c$-loop based at $x_0$ is $c$-homotopic to the trivial loop.

The following result can be deduced from \cite[Part III.H Lemma 2.6]{BH}.

\begin{prop} \label{prop-lssc-BH}
There exists some universal constant $C>0$ so that every geodesic $\delta$-hyperbolic metric space is $C\delta$-large scale simply connected.
\end{prop}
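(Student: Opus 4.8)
The plan is to contract an arbitrary discrete loop by sweeping the \emph{cone} of geodesics joining the basepoint $x_0$ to the vertices of the loop; the thinness of triangles will keep the whole homotopy at a scale proportional to $\delta$. Fix a sufficiently large universal constant $C$ and put $c = C\delta$. Let $(X,d)$ be a geodesic $\delta$-hyperbolic space, $x_0 \in X$, and let $x_0 = p_1, p_2, \dots, p_n = x_0$ be a $c$-loop based at $x_0$; writing $\ell_i = d(x_0, p_i)$ we have $|\ell_i - \ell_{i+1}| \le d(p_i, p_{i+1}) \le c$. For each $i$ fix a geodesic $\sigma_i \colon [0, \ell_i] \to X$ from $x_0$ to $p_i$, parametrised by arc length, with $\sigma_1$ and $\sigma_n$ the constant path at $x_0$.

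The geometric core is a fellow-travelling estimate for consecutive cone geodesics: for all $i$ and all $t \in [0,\ell_i]$, $t' \in [0,\ell_{i+1}]$ with $|t - t'| \le c$, one has $d(\sigma_i(t), \sigma_{i+1}(t')) \le 6(\delta + c)$. By the triangle inequality it suffices, at the cost of a bounded additive error, to bound $d(\sigma_i(t), \sigma_{i+1}(t))$ for $t \le \min(\ell_i, \ell_{i+1})$. Now $\delta$-hyperbolicity applied to the geodesic triangle on $x_0, p_i, p_{i+1}$ places $\sigma_i(t)$ in the $\delta$-neighbourhood of $[x_0, p_{i+1}] \cup [p_i, p_{i+1}]$. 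If $\ell_i - t > c + \delta$, the nearest such point cannot lie on $[p_i, p_{i+1}]$ (otherwise $\sigma_i(t)$ would be within $c + \delta$ of $p_i$), so it lies on $\sigma_{i+1}$ at parameter within $\delta$ of $t$, whence $d(\sigma_i(t), \sigma_{i+1}(t)) \le 2\delta$. If $\ell_i - t \le c + \delta$, then $d(\sigma_i(t), p_i) \le c + \delta$ and $d(\sigma_{i+1}(t), p_{i+1}) = \ell_{i+1} - t \le 2c + \delta$, so $d(\sigma_i(t), \sigma_{i+1}(t)) \le 4c + 2\delta$. In either case the bound holds. This is exactly the kind of statement isolated in \cite[Part III.H Lemma 2.6]{BH}.

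With this in hand I would run the sweep. For a real parameter $s \ge 0$, let $L_s$ be the loop obtained by concatenating, over $i = 1, \dots, n-1$, a sampling with steps of length $\le c$ of a geodesic joining $\sigma_i(\max(0, \ell_i - s))$ to $\sigma_{i+1}(\max(0, \ell_{i+1} - s))$; by the estimate above, together with the remark that $\ell_i - s \le 0$ forces $\ell_{i+1} - s \le c$, each such geodesic has length $\le 6(\delta + c)$, so every $L_s$ is a $c$-loop. Since $\sigma_i(\ell_i) = p_i$, the loop $L_0$ is $c$-homotopic to the given loop (it differs from it only by insertions of sampling points on geodesics of length $\le c$), and $L_s$ is the constant loop at $x_0$ once $s \ge \max_i \ell_i$. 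It then remains to show that $L_s$ and $L_{s'}$ are $c$-homotopic whenever $|s - s'|$ is small: this only slides each junction point a short distance along its geodesic $\sigma_i$ and re-samples the two adjacent geodesics, and since $X$ is geodesic this can be realised by a short sequence of $c$-elementary homotopies. Concatenating finitely many such steps with the initial reduction of the given loop to $L_0$ contracts it, so $X$ is $c$-large scale simply connected. I expect the genuine obstacle to lie precisely in this last point rather than in the geometry: turning the obvious geometric contraction into a bona fide $c$-homotopy, all of whose intermediate loops are genuine $c$-loops, is the careful discrete bookkeeping carried out in \cite[Part III.H Lemma 2.6]{BH}, and it is there that the admissible value of $C$ gets pinned down.
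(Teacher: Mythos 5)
Your strategy---coning the loop to the basepoint along geodesics and sweeping inward---is the right one; it is essentially the argument underlying the reference the paper invokes (the paper gives no proof of this proposition beyond the citation of \cite[Part III.H Lemma 2.6]{BH}), and your fellow-travelling estimate for consecutive cone geodesics is correctly derived from the thin-triangle condition. The gap is in the step you yourself flag, and it is slightly more than discrete bookkeeping. To pass from $L_s$ to $L_{s'}$ you must compare the connecting geodesic from $q_i=\sigma_i(\max(0,\ell_i-s))$ to $q_{i+1}$ with the freshly chosen geodesic from $q_i'$ to $q_{i+1}'$, and ``since $X$ is geodesic'' does not control these: in a general geodesic space two geodesics whose endpoints are $\varepsilon$-apart can diverge arbitrarily. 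You need hyperbolicity a second time, namely the thin-quadrilateral estimate that if $d(a,a'),d(b,b')\le\varepsilon$ then points at matching arc-length parameters on $[a,b]$ and $[a',b']$ lie within a bounded multiple of $\delta+\varepsilon$ of each other. This is of the same flavour as the estimate you already proved and is available to you, but it must be stated; it is the place where hyperbolicity enters the homotopy step itself, not merely the geometry of the cone.

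A second, smaller repair concerns the mesh. The elementary move replacing a sample point $a_j$ of $L_s$ by the corresponding point $b_j$ of $L_{s'}$ is an insertion of $b_j$ followed by a deletion of $a_j$, and the deletion is legal only if $d(b_j,a_{j+1})\le c$: the \emph{diagonals} of the interpolation ladder, not just its rungs, must have length at most $c$. With rungs of length $O(\delta)$ and sampling steps of length up to $c$, the diagonals can exceed $c$, so you should sample the connecting geodesics at mesh $\le c/2$ (using a fixed number of sample points per segment so that the ladders line up as the segment lengths vary), and take $|s-s'|$ of order $\delta$. With these two points in place the sweep does yield a finite chain of $c$-elementary homotopies, and the admissible constant $C$ comes out as an explicit modest number.
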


The following result appears as a large scale analogue of Cartan-Hadamard Theorem in metric geometry. The idea of this local-global principle goes back to \cite{GroHyp}, but the version we use here is inspired from Theorem 8.3 of the Appendix of \cite{OOS} (see also Chapter 8 of \cite{Bow}).

\begin{thm} \label{CarHad}
There exist some constants $c_1,c_2,c_3 > 0$ such that the following holds: every geodesic, $c$-large scale simply connected metric space $X$ with the property that there exists some $R \geq c_1 c$ such that every ball in $X$ of radius $R$ is $c_2 R$-hyperbolic; is $c_3 R$-hyperbolic.
\end{thm}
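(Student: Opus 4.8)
The plan is to establish the contrapositive-flavoured local-to-global principle by the standard "minimal bad configuration" argument, working entirely with $c$-loops rather than continuous curves. Fix the constants by choosing $R \geq c_1 c$ so that every $R$-ball is $c_2 R$-hyperbolic, and suppose we want to show $X$ is $c_3 R$-hyperbolic. First I would reduce hyperbolicity to a quantitative \emph{isoperimetric inequality for $c$-loops}: by the Rips/thin-triangles characterisation it suffices to prove that every $c$-loop of length $n$ bounds a $c$-homotopy disc of area at most (roughly) linear in $n$, i.e.\ can be filled using $O(n)$ elementary $c$-homotopies through $c$-triangles of diameter $\lesssim R$. (Here "area" counts the number of small triangles in a van Kampen-type diagram over the $c$-loop Cayley complex of $X$ at scale $c$.) This is the usual mechanism by which a linear isoperimetric function forces hyperbolicity, transported to the coarse setting; one can quote the relevant equivalence from \cite{OOS} Appendix or \cite{BH} Part III.H.

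The heart of the argument is then a \emph{local filling} step. Take a $c$-loop $\gamma = (x_1,\dots,x_n=x_1)$. If its diameter is at most, say, $R/10$, then $\gamma$ lies inside a single ball of radius $R$, which is $c_2R$-hyperbolic and hence (by Proposition \ref{prop-lssc-BH}) $C c_2 R$-large scale simply connected; so $\gamma$ can be filled with a uniformly bounded number of elementary $c$-homotopies, all taking place through triangles of controlled size. If the diameter exceeds $R/10$, I would subdivide: pick a basepoint $x_1$ and a "radius" function $t \mapsto$ (point on $\gamma$ at distance $\sim t$ from $x_1$ along $\gamma$), and cut $\gamma$ into a bounded number of sub-$c$-loops each of diameter $\leq R/10$ plus a system of "spokes" of length $\leq \mathrm{diam}(\gamma)$; the total length of the resulting pieces is bounded by a fixed multiple of $n$, and each piece of small diameter is filled locally by the previous case. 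Iterating (or inducting on $n$, with the small-diameter case as base) yields a fill of $\gamma$ by $O(n)$ small triangles, which is the desired linear isoperimetric inequality; feeding a specific estimate back through the thin-triangle constant gives the explicit $c_3R$ with $c_3$ depending only on $c_1, c_2$ and the universal $C$.

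The main obstacle is the bookkeeping in the subdivision step: one must ensure that cutting a large-diameter $c$-loop into small-diameter pieces \emph{and the spokes joining them} does not blow up the total length, and that the spokes themselves (which are not a priori $c$-loops) get cancelled in pairs so that the accounting is genuinely linear rather than, say, quadratic. This is exactly the delicate point in Gromov's original argument and in Theorem 8.3 of the Appendix of \cite{OOS}; the cleanest route is to mimic their argument verbatim, organising the fill as a sequence of loops of geometrically decreasing diameter (a "dyadic" decomposition down to scale $R$), where at each scale the number of pieces is controlled by a packing/doubling-type estimate coming from hyperbolicity of the intermediate balls, and each piece at the bottom scale is filled using Proposition \ref{prop-lssc-BH}. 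Once the scales are set up correctly the constants $c_1, c_2, c_3$ can be read off; since the paper only needs the existence of such constants, I would not optimise them and would simply invoke \cite{OOS, Bow, GroHyp} for the combinatorial core, indicating the (minor) changes needed to phrase everything for a topological group's word metric instead of a simplicial complex.
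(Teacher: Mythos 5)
The paper does not actually prove Theorem \ref{CarHad}: it is imported wholesale from the literature, with the text explicitly attributing it to Gromov \cite{GroHyp} and pointing to Theorem 8.3 of the Appendix of \cite{OOS} and to Chapter 8 of \cite{Bow} for the argument. Since you likewise end by invoking those same sources for ``the combinatorial core,'' your treatment is, in its final form, the same as the paper's --- a citation --- and to that extent it is acceptable.

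However, the sketch you offer in between would not survive as a standalone proof, and the gap is precisely at the subdivision step, not merely in its ``bookkeeping.'' If $\gamma$ has diameter $D \gg R$ and you cut it by spokes from a basepoint, the resulting sub-loops (two spokes plus a short arc of $\gamma$) still have diameter comparable to $D$, not $R/10$; so they do not fit in a single $R$-ball and the local filling step does not apply to them. To obtain pieces of diameter $O(R)$ one needs a genuinely two-dimensional net over a filling disc, and a naive count of such pieces is quadratic in $n/R$, not linear --- which is exactly why the theorem is nontrivial: a quadratic isoperimetric bound does not imply hyperbolicity. The actual arguments in \cite{Bow} and in the Appendix of \cite{OOS} avoid this circularity by a more delicate mechanism (a recursive estimate on the isoperimetric function across scales, exploiting that subquadratic filling forces linear filling, or Gromov's induction on scales with careful control of approximating geodesics); your dyadic decomposition gestures at this but does not supply the step that breaks the quadratic barrier. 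If you intend to rely on the references, as the paper does, you should say so outright rather than present a subdivision scheme that, as written, fails.
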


Kapovich and Kleiner \cite[Appendix]{OOS} observed that geodesic metric spaces with one asymptotic cone that is a real tree fulfill the assumption of local hyperbolicity appearing in Theorem \ref{CarHad}, which yields the following corollary.

\begin{cor} \label{lssc}
Let $(X,d$) be a homogeneous, geodesic, $c$-large-scale simply connected metric space. If $X$ is lacunary hyperbolic then $X$ is hyperbolic.
\end{cor}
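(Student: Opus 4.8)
The plan is to apply Theorem \ref{CarHad} (the large-scale Cartan--Hadamard theorem) to the space $X$ itself, using the lacunary hyperbolicity to verify the local hyperbolicity hypothesis. The key observation, due to Kapovich and Kleiner, is that a scale at which a ball looks hyperbolic can be extracted from the scaling sequence realizing the tree as an asymptotic cone.

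First I would spell out what lacunary hyperbolicity gives: there is a non-principal ultrafilter $\omega$ and a scaling sequence $\mathbf{s} = (s_n)$ such that $\mathrm{Cone}^{\omega}(X,d,\mathbf{s})$ is a real tree, in particular $0$-hyperbolic. The aim is to find a single radius $R$ and a constant $\delta'$ such that every ball of radius $R$ in $X$ is $\delta'$-hyperbolic with $\delta'$ small relative to $R$ --- precisely, $\delta' = c_2 R$ in the notation of Theorem \ref{CarHad} --- while simultaneously $R \geq c_1 c$, where $c$ is the large-scale simple connectedness constant of $X$. Then Theorem \ref{CarHad} immediately yields that $X$ is $c_3 R$-hyperbolic, hence hyperbolic, which is the conclusion.

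The heart of the argument is the extraction of $R$. Suppose for contradiction that for every $R \geq c_1 c$ there is a ball $B_R$ of radius $R$ in $X$ that is \emph{not} $c_2 R$-hyperbolic, i.e.\ contains a geodesic triangle with a side not contained in the $c_2 R$-neighbourhood of the union of the other two sides. Applying this with $R = s_n$ (for $n$ large enough that $s_n \geq c_1 c$, which holds $\omega$-almost surely) produces, $\omega$-almost surely, a geodesic triangle in $X$ of diameter $O(s_n)$ witnessing a failure of $c_2 s_n$-thinness. Rescaling by $s_n$ and passing to the ultralimit, these triangles converge to a geodesic triangle in $\mathrm{Cone}^{\omega}(X,d,\mathbf{s})$ which is not $c_2$-thin, hence in particular has a side not contained in the $c_2$-neighbourhood of the union of the other two; since $c_2 > 0$ this contradicts the fact that the cone is a real tree (where triangles are tripods, hence $0$-thin). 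Here one uses homogeneity of $X$ to translate each $B_{s_n}$ so that its centre is the base point, so that the limiting triangle genuinely lives in the cone. This contradiction provides the required radius $R$, and one also checks $R$ can be taken $\geq c_1 c$ since the $s_n$ tend to infinity.

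The main obstacle, and the point requiring the most care, is the rescaling--and--ultralimit step: one must make sure that the defect of thinness at scale $s_n$ (an unbounded quantity, at least $c_2 s_n$) survives the division by $s_n$ as a uniformly positive defect, and that the three vertices of the triangle, once recentred, give $\mathbf{s}$-linear sequences so that they define genuine points of the precone. This is where the quantitative form of Theorem \ref{CarHad} --- with the hyperbolicity defect of balls scaling \emph{linearly} in the radius --- is essential; a non-quantitative local-to-global statement would not suffice. Once this is set up, everything else is bookkeeping: the large-scale simple connectedness constant $c$ of $X$ is supplied by hypothesis, and Proposition \ref{prop-lssc-BH} is not even needed here (it is relevant in the converse direction). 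I would remark at the end that, combined with Proposition \ref{Cay-Ab} and Proposition \ref{prop-lssc-BH}, this corollary is exactly the tool that upgrades local hyperbolicity of a Cayley--Abels graph to global hyperbolicity, which is how it will be used in the proof of Theorem \ref{thm-intro-structure-lac-hyp}.
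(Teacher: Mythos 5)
Your proposal is correct and follows essentially the same route as the paper: extract from the real-tree asymptotic cone a radius $R = s_n$ at which balls of $X$ are $c_2 R$-hyperbolic (with $R \geq c_1 c$), use homogeneity to make this uniform over all balls, and conclude by Theorem \ref{CarHad}. The only difference is that you spell out, via a rescaling/ultralimit contradiction, the Kapovich--Kleiner observation that the paper simply asserts (namely that $\omega$-almost surely the ball of radius $s_n$ is $\delta_n$-hyperbolic with $\delta_n = o(s_n)$), and your filled-in argument is sound.
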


\begin{proof}
Let $e \in X$ be a base point, $\mathbf{s}$ a scaling sequence and $\omega$ a non-principal ultrafilter such that $\mathrm{Cone}^{\omega}(X,d,\mathbf{s})$ is a real tree. Then \mbox{$\omega$-almost} surely, the ball or radius $s_n$ in $X$ around $e$ is $\delta_n$-hyperbolic, with $\delta_n = o(s_n)$. But since $X$ is homogeneous, every ball in $X$ or radius $s_n$ is $\delta_n$-hyperbolic. Now for some large enough $n$ we have $s_n \geq c_1 c$ and $\delta_n/s_n \leq c_2$, so it follows from Theorem \ref{CarHad} that $X$ is hyperbolic.
\end{proof}

Since for a locally compact compactly generated group, compact presentability can be characterized in terms of large scale simple connectedness (see for example \cite[Proposition 8.A.3]{Cor-dlH}), we obtain the following result, which is the topological counterpart of \cite[Theorem 8.1]{OOS} by Kapovich and Kleiner. 

\begin{cor} \label{cor-cp-lac}
Any compactly presented group that is lacunary hyperbolic is a hyperbolic group.
\end{cor}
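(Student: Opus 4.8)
The plan is to combine the characterization of compact presentability by large-scale simple connectedness with the Cartan--Hadamard type statement of Corollary \ref{lssc}. Fix a compact generating set $S$ of $G$ and let $X$ be the Cayley graph of $G$ with respect to $S$, regarded as a geodesic metric space in which each edge has length one. Then $G$ acts on $X$ by isometries, transitively on the vertex set $G$, every point of $X$ lies within distance $\tfrac12$ of a vertex, and $X$ is quasi-isometric to $(G,d_S)$. In particular $X$ is homogeneous up to a bounded additive error: any ball of radius $R$ in $X$ lies within bounded Hausdorff distance of a ball of radius $R$ centred at a vertex, so its hyperbolicity constant differs from that of a vertex-centred $R$-ball by at most a universal additive constant.

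First I would record that $X$ is $c$-large scale simply connected for some $c>0$. This is the content of \cite[Proposition 8.A.3]{Cor-dlH}: a compactly generated locally compact group is compactly presented if and only if it is large scale simply connected, and this property is a quasi-isometry invariant of coarsely connected spaces, hence passes from $(G,d_S)$ to $X$. Next, since $G$ is lacunary hyperbolic, some asymptotic cone $\cone$ is a real tree; as $X$ is quasi-isometric to $(G,d_S)$, the corresponding cone $\mathrm{Cone}^{\omega}(X,d,\mathbf{s})$ is a real tree as well, so that $X$ is lacunary hyperbolic in the sense of Corollary \ref{lssc}. It then remains to apply Corollary \ref{lssc} to $X$. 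That statement is phrased for \emph{homogeneous} geodesic spaces, but its proof uses homogeneity only to pass from ``$\omega$-almost surely the ball of radius $s_n$ about the base point is $\delta_n$-hyperbolic, with $\delta_n=o(s_n)$'' to ``$\omega$-almost surely every ball of radius $s_n$ is $\delta_n$-hyperbolic''; under the weaker ``homogeneity up to bounded error'' above, the same conclusion holds with $\delta_n$ replaced by $\delta_n+C$ for a universal constant $C$, and since $\delta_n+C=o(s_n)$ still, Theorem \ref{CarHad} applies verbatim and shows that $X$ is hyperbolic. By definition of a hyperbolic LC-group, this means that $G$ is a hyperbolic group.

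The only genuinely non-formal point is the passage between the three avatars of the large-scale geometry of $G$ — the homogeneous but non-geodesic metric group $(G,d_S)$, the geodesic but only quasi-homogeneous Cayley graph $X$, and the homogeneous geodesic spaces for which Corollary \ref{lssc} is literally stated — together with the observation that the proof of Corollary \ref{lssc} tolerates a bounded defect in homogeneity. Everything else is a direct concatenation of \cite[Proposition 8.A.3]{Cor-dlH}, the Kapovich--Kleiner observation already incorporated into Corollary \ref{lssc}, and the definition of hyperbolicity for LC-groups.
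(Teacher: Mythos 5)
Your proof is correct and follows essentially the same route as the paper: compact presentability yields large-scale simple connectedness via \cite[Proposition 8.A.3]{Cor-dlH}, and the Cartan--Hadamard statement of Corollary \ref{lssc} then gives hyperbolicity. Your additional care in passing between the homogeneous but non-geodesic space $(G,d_S)$ and the geodesic but only vertex-transitive Cayley graph addresses a point the paper leaves implicit, and your resolution of it (a bounded additive defect in the local hyperbolicity constants is harmless) is sound.
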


%\begin{proof}
%If $G$ is a compactly generated locally compact group, then the inclusion of $G$ into any of its Rips complexes is a quasi-isometry. As being lacunary hyperbolic is invariant under quasi-isometry, all of these Rips complexes are also lacunary hyperbolic. Now since $G$ is compactly presented, there exists $c>0$ such that $\mathrm{Rips}_c^2(G)$ is simply connected (see \cite{Cor-dlH} for example). By Corollary \ref{lssc} $\mathrm{Rips}_c^2(G)$ is hyperbolic, and so is $G$.
%\end{proof}

%\begin{cor} \label{cor-connected-lac}
%Connected-by-compact lacunary hyperbolic groups are then $G$ is a hyperbolic group.
%\end{cor}

%\begin{proof}
%Connected-by-compact compactly generated groups are compactly presented (see for example \cite[Proposition 8.A.16]{Cor-dlH}), so the conclusion follows from Corollary \ref{cor-cp-lac}.
%\end{proof}

\subsubsection{Characterization of locally compact lacunary hyperbolic groups}

We are now able to generalize to the locally compact setting the structural theorem of Olshanskii, Osin, Sapir \cite{OOS} for finitely generated lacunary hyperbolic groups.

\begin{thm} \label{thm-struct-lac}
Let $G$ be a compactly generated locally compact group with a compact open subgroup. Then the following assertions are equivalent:
\begin{enumerate}[label=(\roman*)]
\item $G$ is lacunary hyperbolic;
\item There exists a scaling sequence $\mathbf{s}$ such that for every non-principal ultrafilter $\omega$, the asymptotic cone $\cone$ is a real tree;
\item There exists a hyperbolic LC-group $G_0$ acting on a locally finite tree, transitively and with compact open stabilizers on the set of vertices, and an increasing sequence of discrete normal subgroups $N_n$, whose discrete union $N$ is such that $G$ is topologically isomorphic to $G_0 / N$; and if $S$ is a compact generating set of $G_0$ and \[\rho_n = \min \{\left|g\right|_S \, : \, g \in N_{n+1} \! \setminus \! N_n \},\] then $G_0 / N_n$ is $\delta_n$-hyperbolic with $\delta_n =o(\rho_n)$.
\end{enumerate}
\end{thm}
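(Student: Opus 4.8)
The plan is to dispatch the trivial implication (ii)$\Rightarrow$(i) and then close the circle via (i)$\Rightarrow$(iii)$\Rightarrow$(ii), following the strategy of Olshanskii--Osin--Sapir and Kapovich--Kleiner from \cite{OOS} but feeding in the topological substitutes assembled above: Proposition \ref{prop-free} in place of ``$G$ is a quotient of a free group'', Cayley--Abels graphs in place of Cayley graphs, and the Cartan--Hadamard type Theorem \ref{CarHad}. Throughout let $X$ be a Cayley--Abels graph of $G$ (Proposition \ref{Cay-Ab}); since $G$ acts geometrically on $X$ one has $\cone\simeq\mathrm{Cone}^{\omega}(X,\mathbf{s})$, and $X$ is homogeneous, so the observation of Kapovich--Kleiner used in Corollary \ref{lssc} is available: if $G$ is lacunary hyperbolic there is a scaling sequence $(d_n)$ with the property that $\omega$-almost surely the ball $B_X(d_n)$ is $\delta_n$-hyperbolic with $\delta_n=o(d_n)$; restricting to the $\omega$-large set and relabelling, we may assume this for all $n$. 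If $(\delta_n)$ is bounded then every ball of $X$ is uniformly hyperbolic, so $X$, hence $G$, is hyperbolic and (iii) holds with a constant sequence; so we may also assume $\delta_n\to\infty$, and after a further relabelling that $(\delta_n)$ is strictly increasing.

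For (iii)$\Rightarrow$(ii): if the given sequence $(N_n)$ is eventually constant then $G\cong G_0/N_{n_0}$ is a hyperbolic LC-group, so all its asymptotic cones are real trees. Otherwise $\rho_n\to\infty$, since $N=\bigcup N_n$ is discrete. Realize $G_0/N_n$ by its Cayley--Abels graph $X_n=T/N_n$, $T$ being the locally finite tree on which $G_0$ acts; then $X$ is the quotient of $X_n$ by $N/N_n$, and every element of $N\setminus N_n$ has length of order $\rho_n$, so the ball of radius $\asymp\rho_n$ in $X$ is isometric to the corresponding ball of the $\delta_n$-hyperbolic graph $X_n$. Choosing $(s_n)$ with $\delta_n\ll s_n\ll\rho_n$ (possible since $\delta_n=o(\rho_n)$), the balls of radius $\asymp s_n$ in $X$ are thus $o(s_n)$-hyperbolic, and hence $\mathrm{Cone}^{\omega}(X,\mathbf{s})$ is a real tree for \emph{every} ultrafilter $\omega$. (Keeping track of the scales requires a harmless passage to a subsequence of $(N_n)$ making $\rho_n$ increasing, which I suppress.)

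The substance is (i)$\Rightarrow$(iii). By Proposition \ref{prop-free}, $G=G_0/N^{(0)}$ for an open epimorphism $\pi\colon G_0\twoheadrightarrow G$ with discrete kernel $N^{(0)}$, where $G_0$ acts vertex-transitively with compact open stabilizers on a locally finite tree $T$; being quasi-isometric to $T$, the group $G_0$ is a hyperbolic LC-group, and we take $X=T/N^{(0)}$. Fix a universal constant $C$ (to be pinned down by the constants of Proposition \ref{prop-lssc-BH} and Theorem \ref{CarHad}) and put $N_n:=\langle\langle\,\{\,g\in N^{(0)}:|g|_S\le C\delta_n\,\}\,\rangle\rangle_{G_0}$. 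These subgroups are discrete, normal in $G_0$, increasing, and $\bigcup_n N_n=N^{(0)}$ since $\delta_n\to\infty$. Now three facts. (1) Applying Proposition \ref{prop-lssc-BH} to the $\delta_n$-hyperbolic ball $B_X(d_n)$ shows that every loop of length $\le d_n$ in $X$ is a product of conjugates of loops of length $\le C\delta_n$; read through the covering $T\to X$, this says that every element of $N^{(0)}$ of length $\le d_n$ already lies in $N_n$. Hence any element of $N_{n+1}\setminus N_n$ has length $>d_n$, i.e.\ $\rho_n>d_n$. (2) The graph $X_n=T/N_n$ is $O(\delta_n)$-large-scale simply connected, as it is obtained from the tree $T$ by killing relators of length $\le C\delta_n$; moreover on balls of radius $O(\delta_n)$ it is isometric to $X$, hence those balls are $\delta_n$-hyperbolic. (3) Applying Theorem \ref{CarHad} to $X_n$ \emph{at the scale} $R=O(\delta_n)$ yields that $X_n$, hence $G_0/N_n$, is $O(\delta_n)$-hyperbolic. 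Since $O(\delta_n)=o(d_n)$ and $d_n<\rho_n$, the resulting hyperbolicity constant is $o(\rho_n)$, which is precisely (iii).

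The only real obstacle is making (2) and (3) cooperate, that is, controlling the hyperbolicity constant of $G_0/N_n$ and the ``injectivity radius'' $\rho_n$ simultaneously. Running the Cartan--Hadamard theorem at the large scale $R\asymp d_n$, where $B_X$ is known to be hyperbolic, would only give that $G_0/N_n$ is $O(d_n)$-hyperbolic, which is useless since $\rho_n$ is itself of order $d_n$. The trick, which is exactly the one behind \cite{OOS}, is to exploit local hyperbolicity at the \emph{small} scale $R\asymp\delta_n$: there $B_X$ is still $\delta_n$-hyperbolic while the large-scale-simple-connectedness constant of $X_n$ is also only $O(\delta_n)$, so Theorem \ref{CarHad} applies and outputs the sharp bound $O(\delta_n)$, comfortably below $\rho_n>d_n$. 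Everything else --- fixing the universal constants, and transcribing the covering-space and Rips-complex bookkeeping from Cayley graphs of discrete groups to Cayley--Abels graphs of locally compact groups --- is routine.
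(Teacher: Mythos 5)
Your argument is correct and follows essentially the same route as the paper's proof: Proposition \ref{prop-free} to realize $G$ as $G_0/N$ with $G_0$ acting on a locally finite tree, Cayley--Abels graphs, Proposition \ref{prop-lssc-BH} for large-scale simple connectedness, and Theorem \ref{CarHad} applied at a scale well below the injectivity radius. The only divergence is where you truncate: you take $N_n$ to be the normal closure of the kernel elements of length at most $C\delta_n$ (so the $O(\delta_n)$ large-scale simple connectedness of $X_n$ is immediate, while the bound $\rho_n>d_n$ requires the loop-decomposition argument through the covering), whereas the paper truncates at the cone scale $s_n$ (making the injectivity-radius bound trivial, deriving the $O(\delta_n)$ simple-connectedness constant from hyperbolicity of the $s_n$-ball, and running Cartan--Hadamard at an intermediate scale $\Delta_n$ with $\delta_n\ll\Delta_n\ll s_n$); both variants are valid and of comparable length.
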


The proof of the implication $(iii) \Rightarrow (ii)$ is similar to the one for discrete groups, so we choose not to repeat it here and refer the reader to \cite[p.16]{OOS}. The implication $(ii) \Rightarrow (i)$ being trivial, we only have to prove $(i) \Rightarrow (iii)$. %Compared to the case of finitely generated groups, the idea is to replace Cayley graphs by Cayley-Abels graphs and use the construction of Proposition \ref{prop-free}.

%\begin{proof}
%\begin{itemize}
%\item $(iii) \Rightarrow (ii)$. Let $(s_n)$ be a sequence of positive numbers such that $\delta_n <\!\!< s_n <\!\!< \rho_n$ (take for example the geometric mean $s_n = \sqrt{\delta_n \rho_n}$), and let $\omega$ be a non-principal ultrafilter. Note that the surjection $G_n \twoheadrightarrow G$ is injective on the ball or radius $\rho_n$ **FAUX**. Since $s_n <\!\!< \rho_n$, it follows that \mbox{$\omega$-almost} surely the ball of radius $s_n$ in $G$ is isometric to the ball of radius $s_n$ in $G_n$, and is therefore $\delta_n$-hyperbolic. Now the conclusion is a straightforward consequence of Lemma \ref{cone-arbre} since $\delta_n <\!\!< s_n$.
%**UTILISER DRUTU-SAPIR 3.34 EN SUPPOSANT QU'IL Y A UN TRIANGLE GEODESIQUE SIMPLE DANS CONE ?**
%\end{itemize}
%\end{proof}

%\begin{prop}
%Let $G$ be a compactly generated locally compact group with a compact open subgroup. If $G$ is lacunary hyperbolic then there exists a %hyperbolic group $G_0$ (acting geometrically on a locally finite tree) and an increasing sequence of discrete normal subgroups $(N_n)$ of %union $N$ such that $G$ is topologically isomorphic to $G_0 / N$, and if $S$ is a compact generating set of $G_0$ and \[ \rho_n = \min %\left\{\left|g\right|_S \, : \, g \in N_{n+1} \setminus N_n \right\},\] then $G_n := G_0 / N_n$ is $\delta_n$-hyperbolic with $\delta_n %=o(\rho_n)$.
%\end{prop}

\begin{proof}[Proof of $(i) \Rightarrow (iii)$]
Let $G$ be a lacunary hyperbolic group with a compact open subgroup. We let $G_0$ and $\pi: G_0 \rightarrow G$ be as in Proposition \ref{prop-free}. Recall that $G_0$ is a locally compact compactly generated group acting geometrically on a locally finite tree and $\pi$ is an open morphism from $G_0$ onto $G$ with discrete kernel $N$. Let $\omega$ be a non-principal ultrafilter and $\mathbf{s}$ a scaling sequence such that $\cone$ is a real tree. Choose a compact open subgroup $K$ of $G_0$ intersecting $N$ trivially, and a $K$-bi-invariant compact generating set $S$ of $G_0$. For every $k \geq 1$, let $N_k$ be the normal subgroup of $G_0$ generated by elements of $N$ of word length at most $d_k$ with respect to $S$, and set $G_k = G_0 / N_k$. Note that since $\mathbf{s}$ is an increasing sequence tending to infinity, by construction $(N_k)$ is an increasing sequence of normal subgroups of $G_0$ whose union is $N$. This can be rephrased by saying that we have an infinite sequence of locally compact groups and quotient morphisms \[ G_0 \twoheadrightarrow \cdots \twoheadrightarrow G_k \twoheadrightarrow G_{k+1} \twoheadrightarrow \cdots \] whose direct limit is topologically isomorphic to the group $G$. Observe that the injectivity radius of the map $G_k \twoheadrightarrow G$ is larger than $d_k$, and a fortiori the same holds for the injectivity radius of the map $G_k \twoheadrightarrow G_{k+1}$.

For every $k \geq 1$, we push the pair $(K,S)$ in $G_k$ and in $G$, and we denote by $X_k$ (resp. $X$) the Cayley-Abels graph of $G_k$ (resp. $G$) with respect to this compact open subgroup and compact generating set. By abuse of notation, we still denote by $K$ the image of the subgroup $K$ in $G_k$. To the above sequence of groups and epimorphisms corresponds an infinite sequence of coverings of graphs \[ X_0 \twoheadrightarrow \cdots \twoheadrightarrow X_k \twoheadrightarrow X_{k+1} \twoheadrightarrow \cdots \] Note that the map $X_k \twoheadrightarrow X$ is injective on the ball $B_{X_k}(K,d_k)$ of radius $d_k$ around the vertex $K$.

Now since $G$ is quasi-isometric to its Cayley-Abels graph $X$, their asymptotic cones $\mathrm{Cone}^{\omega}(X,\mathbf{s})$ and $\cone$ are bi-Lipschitz homeomorphic. It follows that $\mathrm{Cone}^{\omega}(X,\mathbf{s})$ is a real tree, and therefore \mbox{$\omega$-almost} surely the ball of radius $d_k$ in $X$ is $\delta_k$-hyperbolic with $\delta_k = o(d_k)$. By the above observation on the injectivity radius of the map $X_k \twoheadrightarrow X$, the same is true in $X_k$. According to Proposition \ref{prop-lssc-BH}, the ball $B_{X_k}(K,d_k)$ is $O(\delta_k)$-large scale simply connected. But by construction of the group $G_k$, any loop in $X_k$ is built from loops of length at most $d_k$, so it follows that the entire graph $X_k$ is $C_k$-large scale simply connected, with $C_k = O(\delta_k)$.

Now let us pick a sequence $(\Delta_k)$ such that $\delta_k <\!< \Delta_k <\!< d_k$. If we let $c_1,c_2,c_3$ be the constants from Theorem \ref{CarHad}, then \mbox{$\omega$-almost} surely $\Delta_k \geq c_1 C_k$ and $\Delta_k \geq \delta_k / c_2$. So we are in position to apply Theorem \ref{CarHad}, which implies that \mbox{$\omega$-almost} surely $X_k$ is $c_3 \Delta_k$-hyperbolic.

Now as observed earlier, the injectivity radius $\rho_k$ of $G_k \twoheadrightarrow G_{k+1}$ satisfies $\rho_k \geq d_k$. Since $\Delta_k = o(d_k)$, we clearly have $\Delta_k = o(\rho_k)$. It follows that $\omega$ almost surely, the graph $X_k$ (and a fortiori the group $G_k$) is $o(\rho_k)$-hyperbolic, and the conclusion follows.
\end{proof}

The next proposition establishes some stability properties of the class of locally compact lacunary hyperbolic groups. We note that, as observed in \cite{OOS}, the class of finitely generated lacunary hyperbolic groups is not stable under free product.

\begin{prop} \label{prop-lac-stab}
The class of locally compact lacunary hyperbolic groups is stable under taking:
\begin{enumerate}[label=(\alph*)]
\item a semidirect product with a compact group;
\item an HNN-extension over some compact open subgroup;
\item an amalgamated product with a hyperbolic LC-group over some compact open subgroup.
\end{enumerate}
\end{prop}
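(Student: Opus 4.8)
The plan is to handle item (a) by quasi-isometry invariance, and items (b) and (c) together by a Bass--Serre argument that realizes $G$ as acting on a tree with compact edge stabilizers. For (a): if $G = H \rtimes K$ or $G = K \rtimes H$ with $K$ compact and $H$ lacunary hyperbolic, then $H$ is a closed cocompact subgroup of $G$ --- in the first case because $G = HK$ with $K$ compact, in the second because $G \to G/K \cong H$ is a proper homomorphism with cocompact image --- so $G$ and $H$ are quasi-isometric, and the claim follows from the quasi-isometry invariance of lacunary hyperbolicity recorded in Remark \ref{rmq-connected-lac-hyp}.

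For (b) and (c), the vertex group $H$ contains a compact open subgroup, namely the amalgamating subgroup $A$; hence so does $G$, and $G$ is a compactly generated locally compact group in which $H$ (and, in case (c), the hyperbolic LC-group $L$) sits as an open subgroup. The group $G$ acts on its Bass--Serre tree $T$, transitively on each orbit of vertices, with vertex stabilizers conjugate to $H$ (resp.\ to $L$) and compact edge stabilizers conjugate to $A$. Using a compact open subgroup of $G$ and a compact generating set, the Cayley--Abels graph of $G$ furnished by Proposition \ref{Cay-Ab} is $G$-equivariantly quasi-isometric to a tree of spaces $Y$ over $T$ whose vertex spaces are isometric to finitely many copies of the Cayley--Abels graphs $X_H$ of $H$ and $X_L$ of $L$, and whose edge spaces have diameter at most a constant $D$, since the edge groups are compact.

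Since $H$ is lacunary hyperbolic, as in the Kapovich--Kleiner observation preceding Corollary \ref{lssc} --- after passing to a subsequence if needed --- we may fix a scaling sequence $\mathbf{s}$ such that $\omega$-almost surely every ball of radius $s_n$ in $X_H$ is $\delta_n$-hyperbolic with $\delta_n = o(s_n)$, while $X_L$ is $\delta_L$-hyperbolic for a fixed $\delta_L$. A ball of radius $s_n$ in $Y$ is built by gluing pieces of the vertex spaces --- each $o(s_n)$-hyperbolic or $\delta_L$-hyperbolic --- along edge spaces of diameter at most $D$ in a tree pattern, and the standard estimate for trees of hyperbolic spaces with uniformly bounded edge spaces (the automatically flaring case of the Bestvina--Feighn combination theorem) shows such a ball is $O(\delta_n + D) = o(s_n)$-hyperbolic. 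Transporting this along the quasi-isometry between $Y$ and $G$ shows that $\omega$-almost surely balls of radius $s_n$ in $G$ (equivalently in its Cayley--Abels graph) are $o(s_n)$-hyperbolic, which as in the proof of Corollary \ref{lssc} forces $\cone$ to be $0$-hyperbolic, i.e.\ a real tree. Hence $G$ is lacunary hyperbolic.

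The main obstacle is the geometric input of the last two paragraphs: the locally compact form of the tree-of-spaces picture for graphs of groups with compact edge groups (essentially Abels' construction combined with Bass--Serre theory) and the estimate that such a tree of spaces inherits the hyperbolicity constants of its vertex spaces up to an additive error controlled by the diameter of the edge spaces. Both are standard in spirit --- the latter being the trivial-flaring instance of Bestvina--Feighn --- but they must be set up and applied carefully at the level of balls. One might try instead to argue via the direct-limit description of Theorem \ref{thm-struct-lac}(iii), but realizing the amalgamating subgroup $A$ inside the hyperbolic covering group $G_0$ requires a splitting that need not exist, so the geometric route is preferable.
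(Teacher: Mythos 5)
Part (a) is fine. For (b) and (c), however, your argument has a genuine gap at its crux, and it occurs precisely at the step you flag as "standard in spirit". The Bestvina--Feighn combination theorem (even in its trivially-flaring, bounded-edge-space form) is a statement about trees of \emph{globally} hyperbolic geodesic spaces: it takes as input a hyperbolicity constant valid on each entire vertex space and outputs a global constant for the total space. You want to apply it to a ball of radius $s_n$ in $Y$. But the intersection of such a ball with a vertex space $X_v$ is not a ball of $X_v$, is not a geodesic space, and is not obviously $\delta_n$-hyperbolic just because balls of $X_v$ are (geodesics of $B_Y(y_0,s_n)$ between points of $X_v$ need not stay in $X_v$, and $\delta$-hyperbolicity of a ball does not pass to arbitrary subsets as a statement about geodesic spaces). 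So the "local combination theorem" you invoke is not a corollary of the global one; it would have to be proved from scratch, and that is exactly the hard content of the proposition. Without it, nothing forces $\mathrm{Cone}^{\omega}(Y,\mathbf{s})$ to be a tree.

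The paper's route, which you considered and rejected, avoids this entirely and your reason for rejecting it rests on a misreading. One does not need to realize the amalgamating compact subgroups $K,L$ inside the covering group $G_0$ of Proposition \ref{prop-free}; one realizes them inside the intermediate hyperbolic quotients $G_n=G_0/N_n$ of Theorem \ref{thm-struct-lac}(iii) for $n$ large. Since the injectivity radius of $G_n\twoheadrightarrow G$ tends to infinity while $K$ and $L$ have bounded diameter, these compact subgroups lift isomorphically to $G_n$ for all large $n$. One then forms the HNN-extension (resp.\ amalgam) $G_n'$ of each $G_n$, applies the \emph{global} combination theorem to get that $G_n'$ is $O(\delta_n)$-hyperbolic, observes that the extended epimorphisms $G_n'\twoheadrightarrow G_{n+1}'$ have the same injectivity radii $\rho_n$ as $G_n\twoheadrightarrow G_{n+1}$, and concludes by the implication $(iii)\Rightarrow(i)$ of Theorem \ref{thm-struct-lac}. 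In effect, the direct-limit description is the device that replaces your unproved local combination theorem by its legitimate global counterpart applied to each $G_n$; I would rewrite (b) and (c) along those lines.
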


\begin{proof}
The statement $(a)$ is trivial. Let us prove $(b)$. Let $G$ be a lacunary hyperbolic group, $K,L$ two compact open subgroups, $\varphi: K \rightarrow L$ a topological isomorphism, and $G' = \mathrm{HNN}(G,K,L,\varphi)$ the corresponding HNN-extension. We want to prove that $G'$ is lacunary hyperbolic. If $G$ is hyperbolic then there is nothing to prove because since $K,L$ are compact, the group $G'$ is hyperbolic as well. Otherwise $G$ has a compact open subgroup by Proposition \ref{prop-hyp-co}, and we let $G_0$ be a hyperbolic LC-group and $(N_n)$ an increasing sequence of discrete normal subgroups as in Theorem \ref{thm-struct-lac}. There exists an integer $n_0 \geq 1$ such that for every $n \geq n_0$, the group $G_n$ has subgroups isomorphic to $K$ and $L$, which we still denote by $K$ and $L$ by abuse of notation. Let us form the HNN-extension $G_n' = \mathrm{HNN}(G_n,K,L,\varphi)$. Since $G_n$ is $\delta_n$-hyperbolic and $K,L$ are compact, the group $G_n'$ is $\delta_n'$-hyperbolic. Moreover since $K,L$ have bounded diameter in $G_n$, we have $\delta_n' = O(\delta_n)$. Now the epimorphism $\alpha_n: G_n \twoheadrightarrow G_{n+1}$ naturally extends to $\alpha_n': G_n' \twoheadrightarrow G_{n+1}'$ by mapping the stable letter to itself, and the injectivity radius $\rho_n'$ of $\alpha_n'$ is equal to the injectivity radius $\rho_n$ of $\alpha_n$. Since by assumption $\rho_n < \! < \delta_n$, we have $\rho_n' < \! < \delta_n'$, and the fact that $G'$ is lacunary hyperbolic follows from the implication $(iii) \Rightarrow (i)$ in Theorem \ref{thm-struct-lac}.

The case $(c)$ of an amalgamated product with a hyperbolic LC-group over some compact open subgroup is analogous, and relies on the fact that the amalgamated product of two hyperbolic LC-groups over some compact open subgroup remains hyperbolic, with a control on the hyperbolicity constant in terms of the hyperbolicity constants of the two groups and the diameter of the compact subgroup.
\end{proof}

\begin{ex}
Here is a construction providing examples of locally compact lacunary hyperbolic groups with a non-discrete topology. Let $\Gamma$ be a discrete lacunary hyperbolic group, and $G$ a hyperbolic LC-group with some compact open subgroup $U$. Let us consider the semidirect product $H = (\ast_{G/U} \Gamma) \rtimes G$, where $G$ acts on the free product $\ast_{G/U} \Gamma$ by permuting the factors according to the natural action of $G$ on $G/U$, and the topology on $H$ is such that the subgroup $G$ is open. Equivalently, $H$ can be defined as the topological amalgamated product of $\Gamma \times U$ with $G$ over the subgroup $U$. It follows from the statements $(a)$ and $(c)$ of Proposition \ref{prop-lac-stab} that the group $H$ is lacunary hyperbolic. Note that the group $H$ may be far from discrete, because for example $H$ is non-unimodular as soon as $G$ is.
\end{ex} 

\section{Subgroups of lacunary hyperbolic groups} \label{sec-subgroups}

In this section we carry on the investigation started in \cite{OOS} of groups that may appear as subgroups of lacunary hyperbolic groups. 

\subsection{Quasi-isometrically embedded normal subgroups}

It is a classical result that if $G$ is a hyperbolic LC-group, and $N$ a compactly generated quasi-isometrically embedded normal subgroup of $G$, then $N$ must be either compact or cocompact in $G$. The following proposition, which is new even for discrete groups, is a generalization of this result to the realm of lacunary hyperbolic groups.

\begin{prop} \label{prop-normal-compact-cocompact}
Let $G$ be a locally compact lacunary hyperbolic group, and $N$ a closed normal subgroup of $G$. Assume that $N$ is compactly generated and quasi-isometrically embedded in $G$. Then $N$ is either compact or cocompact in $G$.
\end{prop}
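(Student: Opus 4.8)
The plan is to study the action of $\pc_G(N,\mathbf{s})$ on an asymptotic cone $\mathcal{C}=\cone$ that is a real tree (such a one exists because $G$ is lacunary hyperbolic), and to combine the two consequences of the hypotheses on $N$: being a closed compactly generated subgroup, $N$ is compact if and only if $\pc_G(N,\mathbf{s})$ has a bounded orbit in $\mathcal{C}$, by Lemma~\ref{lem-comp-fixpt}; and being quasi-isometrically embedded, the orbit $Y:=\pc_G(N,\mathbf{s})\cdot(e)^{\omega}$ will turn out to be a \emph{subtree} of $\mathcal{C}$ on which $\pc_G(N,\mathbf{s})$ acts transitively. The idea is that these two facts together leave no room for a ``parabolic'' (horocyclic) phenomenon, which is the only behaviour that could a priori obstruct the conclusion.

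First I would prove that $Y$ is convex. Given two points $a=(h_n)^{\omega}$ and $b=(k_n)^{\omega}$ of $Y$, with $h_n,k_n\in N$, I would choose for each $n$ a geodesic of $N$ (with respect to a fixed word metric) from $h_n$ to $k_n$; since $N$ is quasi-isometrically embedded this is a quasi-geodesic of $G$ with constants independent of $n$, and its length is $O(s_n)$ because $d_G(h_n,k_n)=O(s_n)$. Reparametrising on $[0,1]$ and taking the $\omega$-limit then yields a continuous path in $\mathcal{C}$ from $a$ to $b$ all of whose points lie in $Y$; since a real tree is uniquely arcwise connected, the image of such a path contains the segment $[a,b]$, so $[a,b]\subseteq Y$. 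That $\pc_G(N,\mathbf{s})$ acts transitively on $Y$ is immediate from the definition of $Y$ as an orbit.

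Then comes the dichotomy. If $Y$ is reduced to a point, then $\pc_G(N,\mathbf{s})$ has a bounded orbit in $\mathcal{C}$, so $N$ is compact by Lemma~\ref{lem-comp-fixpt}. Otherwise the transitive, hence minimal, action of $\pc_G(N,\mathbf{s})$ on the non-degenerate real tree $Y$ is neither bounded nor horocyclic (a minimal action on a real tree that is not a point is never of either type), so it is lineal, focal or of general type; in each case $\pc_G(N,\mathbf{s})$ contains an element $h$ translating along a line of $Y\subseteq\mathcal{C}$, hence a hyperbolic isometry of $\mathcal{C}$. Since $\pc(G,\mathbf{s})$ acts transitively, hence minimally, on $\mathcal{C}$, and $\pc_G(N,\mathbf{s})\triangleleft\pc(G,\mathbf{s})$ contains the hyperbolic element $h$, Lemma~\ref{lem-norm-hyp} applies and shows that $\pc_G(N,\mathbf{s})$ acts minimally on the whole of $\mathcal{C}$. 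But $Y$ is a non-empty $\pc_G(N,\mathbf{s})$-invariant subtree of $\mathcal{C}$, so $Y=\mathcal{C}$; thus $\pc_G(N,\mathbf{s})$ acts coboundedly (even transitively) on $\mathcal{C}$, and Corollary~\ref{lem-cocomp-cone} gives that $N$ is cocompact in $G$.

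I expect the convexity of $Y$ to be the main obstacle, since it is precisely the step where quasi-isometric embeddedness is used, and it is what rules out a parabolic behaviour of $\pc_G(N,\mathbf{s})$ on $\mathcal{C}$ — an exclusion that genuinely needs the hypothesis: the horospherical subgroup $\mathbb{R}^{n-1}$ of $\mathbb{R}^{n-1}\rtimes\mathbb{R}$ is a closed compactly generated normal subgroup that is neither compact nor cocompact, and it is exponentially distorted. The remaining verifications should be routine: that a geodesic of $N$ is a quasi-geodesic of $G$ with uniform constants and length $O(s_n)$, that an isometry translating along a line of a subtree is hyperbolic on the ambient real tree, and that transitivity of the $\pc(G,\mathbf{s})$-action on $\mathcal{C}$ makes it minimal so that Lemma~\ref{lem-norm-hyp} is available.
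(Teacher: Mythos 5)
Your proposal is correct and follows essentially the same route as the paper: both arguments use quasi-isometric embeddedness to see that the orbit of $(e)^{\omega}$ under $\pc_G(N,\mathbf{s})$ is an invariant subtree on which the action is transitive, rule out the horocyclic case on that subtree, and then conclude via Lemma~\ref{lem-norm-hyp}, Corollary~\ref{lem-cocomp-cone} and Lemma~\ref{lem-comp-fixpt}. The only difference is cosmetic (you split on whether the orbit subtree is a point, the paper on whether $\pc_G(N,\mathbf{s})$ contains a hyperbolic element), and your explicit verification that the orbit is convex fills in a step the paper merely asserts.
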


\begin{proof}
We let $\mathcal{C} = \cone$ be an asymptotic cone of $G$ that is a real tree, and we denote by $\mathcal{C}_N$ the $\pc_G(N,\mathbf{s})$-orbit of $(e)^{\omega} \in \cone$. Since $N$ is compactly generated and quasi-isometrically embedded in $G$, the subset $\mathcal{C}_N$ is a subtree of $\mathcal{C}$ that is clearly invariant by $\pc_G(N,\mathbf{s})$.

First assume that $\pc_G(N,\mathbf{s})$ acts on $\mathcal{C}$ with some hyperbolic element. Then we are in position to apply Lemma \ref{lem-norm-hyp}, which implies that $\mathcal{C}_N$ must be the entire $\mathcal{C}$. The fact that $N$ is cocompact in $G$ then follows from Corollary \ref{lem-cocomp-cone}.

We now have to deal with the case when $\pc_G(N,\mathbf{s})$ does not have any hyperbolic element. We claim that the action of $\pc_G(N,\mathbf{s})$ on $\mathcal{C}$ cannot be horocyclic. Indeed otherwise the action of $\pc_G(N,\mathbf{s})$ on the subtree $\mathcal{C}_N$ would be horocylic as well, which is impossible since a transitive isometric action on a real tree cannot be horocyclic. This implies that if $\pc_G(N,\mathbf{s})$ does not contain any hyperbolic element then $\pc_G(N,\mathbf{s})$ must have a fixed point, and by Lemma \ref{lem-comp-fixpt} this forces the subgroup $N$ to be compact.
\end{proof}

\subsection{Subgroups satisfying a law}

The goal of this paragraph is to exhibit some obstruction for a given group to be a subgroup of a lacunary hyperbolic group.

Recall that if $G$ is a compactly generated group endowed with a compact generating set $S$, and if $H$ is a subgroup of $G$, we denote by $B_{G,H}(n)$ the intersection between $H$ and the ball in $G$ of radius $n \geq 1$ around the identity. If $\mu$ is a left-invariant Haar measure on $G$, a measurable subgroup $H$ is said to have relative exponential growth in $G$ if there exists $\rho > 1$ such that $\rho^n \leq \mu \left( B_{G,H}(e,n) \right)$ for every $n \geq 1$. Note that this condition implies that the subgroup $H$ has positive Haar measure, and hence is open in $G$. If $H_1$ is an open subgroup of $G$, the restriction to $H_1$ of a Haar measure on $G$ is a Haar measure on $H_1$, so if $H_2$ is a subgroup of $H_1$ of relative exponential growth in $H_1$, then $H_2$ has relative exponential growth in $G$. For example a compactly generated open subgroup of exponential growth has relative exponential growth in the ambient group.

\begin{prop} \label{lem-exp-growth-horo}
Let $G$ be a unimodular lacunary hyperbolic group, and $H \leq G$ a subgroup of relative exponential growth in $G$. If $\mathcal{C} = \cone$ is an asymptotic cone of $G$ that is a real tree, then the action of $\pc_G(H,\mathbf{s})$ on $\mathcal{C}$ cannot have a fixed point or be horocyclic.
\end{prop}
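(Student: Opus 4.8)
The plan is to argue by contradiction in two stages, handling the fixed-point case and the horocyclic case with essentially the same measure-theoretic estimate. Suppose first that $\pc_G(H,\mathbf{s})$ fixes a point of $\mathcal{C}$. Since $\pc(G,\mathbf{s})$ acts transitively on $\mathcal{C}$, we may assume after conjugating by an element of $\pc(G,\mathbf{s})$ that the fixed point is $(e)^{\omega}$; note that conjugation does not affect whether $H$ has a fixed point because $\pc_G(H,\mathbf{s})$ is sent to a conjugate subgroup, though one must be slightly careful since $H$ itself need not be normal --- here it is cleaner to observe that a fixed point of $\pc_G(H,\mathbf{s})$ is some $(g_n)^{\omega}$, and that $(g_n^{-1}) \cdot \pc_G(H,\mathbf{s}) \cdot (g_n)$ fixes $(e)^{\omega}$; replacing $H$-valued sequences by their $(g_n)$-conjugates amounts, up to a bounded perturbation, to translating the word metric, which changes nothing asymptotically. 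In any case, $\pc_G(H,\mathbf{s})$ fixing $(e)^{\omega}$ means precisely that $\mathrm{lim}^{\omega}\, |h_n|_S / s_n = 0$ for every $(h_n) \in \pc_G(H,\mathbf{s})$, i.e.\ that $H$ is \emph{sublinearly distorted along $\mathbf{s}$}: for each $C>0$ and each $\varepsilon>0$, $\omega$-almost surely $B_{G,H}(C s_n)$ is contained in the ball of $G$-radius $\varepsilon s_n$. This is incompatible with relative exponential growth: picking $C$ fixed, the Haar measure of $B_{G,H}(C s_n)$ is at least $\rho^{C s_n}$, which grows exponentially in $s_n$, whereas using unimodularity and a volume-doubling-type bound for the word metric on $G$ (the ball $B_S(\varepsilon s_n)$ has measure at most $c^{\varepsilon s_n}$ for a constant $c$ depending only on $S$), $B_S(\varepsilon s_n)$ has measure at most $c^{\varepsilon s_n}$; choosing $\varepsilon$ small enough that $c^{\varepsilon} < \rho^{C}$ gives a contradiction.

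For the horocyclic case the idea is the same but localized at a far-away point. Suppose the action of $\pc_G(H,\mathbf{s})$ on $\mathcal{C}$ is horocyclic. Then $\pc_G(H,\mathbf{s})$ fixes a unique end $\eta$ of $\mathcal{C}$, and although orbits are unbounded, the action contracts the Busemann function towards $\eta$: for every $(h_n) \in \pc_G(H,\mathbf{s})$ and every point $x$ on the ray $\xi$ representing $\eta$ emanating from $(e)^{\omega}$, the point $(h_n)\cdot x$ lies again on $\xi$ provided $x$ is far enough out (as in the horocyclic analysis of the proof of Proposition \ref{prop-contrac-cone}). Concretely, if $|h_n|_S \le C s_n$ then $d_\omega((e)^\omega,(h_n)^\omega)\le C$, so $(h_n)$ fixes $\xi([C,+\infty[)$; picking the point $\xi(C) = (t_n)^\omega$ for a suitable sequence $t_n\in G$ with $|t_n|_S \sim C s_n$, we get $\mathrm{lim}^\omega |t_n^{-1} h_n t_n|_S / s_n = 0$. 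Hence $\omega$-almost surely $t_n^{-1}\, B_{G,H}(C s_n)\, t_n$ is contained in the ball of $G$-radius $\varepsilon s_n$. Since $G$ is unimodular, conjugation preserves Haar measure, so $\mu(B_{G,H}(C s_n)) \le \mu(B_S(\varepsilon s_n)) \le c^{\varepsilon s_n}$, and exactly as before this contradicts $\mu(B_{G,H}(C s_n)) \ge \rho^{C s_n}$ once $\varepsilon$ is chosen small. (The fixed-point case is in fact the special case $t_n = e$, $C = 0$ of this argument, so the two can be merged.)

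The step I expect to be the genuine obstacle is the horocyclic reduction --- producing, from the horocyclic dynamics on $\mathcal{C}$, a single sequence $(t_n)$ in $G$ with $|t_n|_S = O(s_n)$ such that conjugation by $t_n$ sends an entire exponential-sized ball $B_{G,H}(C s_n)$ sublinearly close to the identity \emph{$\omega$-almost surely}, uniformly over that ball. One has to be careful that the statement ``$(h_n)$ fixes $\xi(C)$'' holds simultaneously for all $(h_n)$ with $|h_n|_S \le C s_n$, not just for a fixed sequence; this is exactly the content of the fact that a ball around $(e)^\omega$ of radius $C$ lands inside a horoball based at $\eta$, combined with the transitivity of the $\pc(G,\mathbf{s})$-action to realize $\xi(C)$ by an honest sequence $(t_n)$. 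By contrast, the unimodularity input (Haar measure invariant under both translation and conjugation) and the exponential-vs-at-most-exponential volume comparison are routine; the only quantitative point to check is that the constant $c$ bounding the growth of balls in $(G,d_S)$ does not depend on the scaling data, which is clear since it depends only on $S$.
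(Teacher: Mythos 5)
Your argument is correct and is essentially the paper's own proof: you use the horocyclic (resp.\ elliptic) dynamics to conjugate a linear-radius ball $B_{G,H}(Cs_n)$ by a single $\mathbf{s}$-linear sequence $(t_n)$ realizing a point on the ray toward the fixed end into a sublinear-radius ball of $G$ $\omega$-almost surely, and then invoke unimodularity to compare Haar measures and contradict relative exponential growth. The paper does exactly this with the point $\xi(1/2)$ and radius $s_n$ (treating the fixed-point case as the easier variant rather than merging it), and the uniformity-over-the-ball step you flag is handled there by the same implicit ultrafilter argument.
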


\begin{proof}
We shall prove that the action of $\pc_G(H,\mathbf{s})$ on $\mathcal{C}$ cannot be horocyclic. The case of an action with a fixed point can be ruled out with the same kind of arguments, and is actually easier.

Let $S$ be a compact generating set of $G$, and $\mu$ a left-invariant Haar-measure on $G$. We argue by contradiction and assume that the action of $\pc_G(H,\mathbf{s})$ on $\mathcal{C}$ is horocyclic, and denote by $\xi: [0, + \infty [ \rightarrow \mathcal{C}$ the ray emanating from $(e)^{\omega}$ representing the end of $\mathcal{C}$ that is fixed by $\pc_G(H,\mathbf{s})$. Then every element $(h_n) \in \pc_G(H,\mathbf{s})$ such that $|h_n|_S \leq s_n$ fixes the point $\xi(1/2) = (\xi_n)^{\omega}$, that is \[ \mathrm{lim}^{\omega} \frac{d_S(h_n \xi_n, \xi_n)}{s_n} = \mathrm{lim}^{\omega} \frac{|\xi_n^{-1} h_n \xi_n|_S}{s_n} = 0. \] This means that for every $\varepsilon > 0$, \mbox{$\omega$-almost} surely the element $\xi_n^{-1} h_n \xi_n$ has length at most $\varepsilon s_n$, which is equivalent to saying that $h_n$ belongs to $\xi_n \cdot B_{G}(e, \varepsilon s_n) \cdot \xi_n^{-1}$. So for every $\varepsilon > 0$, \mbox{$\omega$-almost} surely \[ B_{G,H}(e,s_n) \subset \xi_n \cdot B_G(e,\varepsilon s_n) \cdot \xi_n^{-1}.\]  Combined with the fact that $G$ is unimodular, we obtain that \mbox{$\omega$-almost} surely \[ \mu \left(B_{G,H}(e,s_n)\right) \leq \mu \left(\xi_n \cdot B_G(e,\varepsilon s_n) \cdot \xi_n^{-1}\right) = \mu \left(B_G(e,\varepsilon s_n)\right) \leq \alpha^{\varepsilon s_n}\] for some constant $\alpha \geq 1$. This implies that \[ \liminf_{n \rightarrow \infty} \frac{\log \mu \left(B_{G,H}(e,s_n)\right)}{s_n} = 0, \] which is a contradiction with the fact that $H$ has relative exponential growth in $G$.
\end{proof}

Let us derive the following consequence of Proposition \ref{lem-exp-growth-horo}, which recovers Theorem 3.18 (c) of \cite{OOS}, and generalizes it to the setting of unimodular locally compact lacunary hyperbolic groups. 

\begin{cor} \label{prop-exponent-growth}
Let $G$ be a unimodular lacunary hyperbolic group, and $H \leq G$ a subgroup of finite exponent. Then $H$ cannot have relative exponential growth in $G$.
\end{cor}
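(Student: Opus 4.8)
The plan is to argue by contradiction and reduce everything to the fact that a hyperbolic isometry has infinite order. Suppose $H$ has relative exponential growth in $G$. Since $G$ is lacunary hyperbolic, fix a scaling sequence $\mathbf{s}$ and a non-principal ultrafilter $\omega$ for which $\mathcal{C} = \cone$ is a real tree. Applying Proposition~\ref{lem-exp-growth-horo} to $H$, the isometric action of $\pc_G(H,\mathbf{s})$ on $\mathcal{C}$ has no fixed point and is not horocyclic. Since $\mathcal{C}$ is a complete real tree, a bounded isometric action on $\mathcal{C}$ would fix its circumcenter, so the action is not bounded either. By the classification of isometric actions on hyperbolic spaces recalled in Figure~\ref{types}, the action of $\pc_G(H,\mathbf{s})$ on $\mathcal{C}$ is therefore lineal, focal, or of general type; in each of these cases $\pc_G(H,\mathbf{s})$ contains a hyperbolic isometry of $\mathcal{C}$.

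On the other hand, if $m$ denotes the exponent of $H$, then $H$ satisfies the law $x^m = 1$, and hence so does $\pc_G(H,\mathbf{s})$ by the observation recorded right after the definition of $\pc_G(H,\mathbf{s})$; in particular every element of $\pc_G(H,\mathbf{s})$ has finite order. But a hyperbolic isometry $\varphi$ of the real tree $\mathcal{C}$ translates along its axis by $\left\| \varphi \right\| > 0$, so $\varphi^n$ translates by $n\left\| \varphi \right\| \neq 0$ and is nontrivial for every $n \geq 1$: a hyperbolic isometry has infinite order. This contradicts the conclusion of the previous paragraph, so $H$ cannot have relative exponential growth in $G$.

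I do not expect any real obstacle in carrying this out: the argument is a direct combination of Proposition~\ref{lem-exp-growth-horo} with the triviality that hyperbolic isometries of a real tree have infinite order. The only minor subtlety is the passage from ``no fixed point'' to ``not bounded'', which uses completeness of $\mathcal{C}$; this can in fact be bypassed, since lineal, focal and general-type actions each already produce a hyperbolic element, and one may simply stop once such an element is exhibited.
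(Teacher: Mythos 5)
Your argument is correct and is essentially the paper's own proof run in the contrapositive direction: the paper observes that $\pc_G(H,\mathbf{s})$ has finite exponent, hence contains no hyperbolic isometry of the tree, hence fixes a point or acts horocyclically, and then invokes Proposition~\ref{lem-exp-growth-horo}, exactly the combination you use. Only your closing aside is slightly off --- the proposed ``bypass'' does not actually avoid ruling out the bounded-orbits case --- but the completeness/circumcenter argument in your main text handles that case correctly (asymptotic cones are complete), and the paper relies on the same implicit fact.
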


\begin{proof}
For any scaling sequence $\mathbf{s}$, the group $\pc_G(H,\mathbf{s})$ has finite exponent as well. It follows that for any asymptotic cone $\cone$ that is a real tree, the action of $\pc_G(H,\mathbf{s})$ on $\cone$ must have a fixed point or be horocyclic, and $H$ cannot have relative exponential growth in $G$ according to Proposition \ref{lem-exp-growth-horo}.
\end{proof}

We point out that both Corollary \ref{prop-exponent-growth} and Proposition \ref{lem-exp-growth-horo} fail without the assumption that the group is unimodular. Actually the corresponding statements at the level of groups rather than asymptotic cones already fail for hyperbolic LC-groups of general type. Take for example the amalgamated product of $\mathbb{Z}/2\mathbb{Z} \times \mathbb{F}_p[\![t]\!]$ and $\mathbb{F}_p( \! (t) \! ) \rtimes_t \mathbb{Z}$ over the compact open subgroup $\mathbb{F}_p[\![t]\!]$. The resulting group is hyperbolic of general type and non-unimodular. Having relative exponential growth in the open subgroup $\mathbb{F}_p( \! (t) \! ) \rtimes \mathbb{Z}$, the finite exponent subgroup $\mathbb{F}_p( \! (t) \! )$ has relative exponential growth in the ambient group. To see why the conclusion of Proposition \ref{lem-exp-growth-horo} fails, note that the action of $\mathbb{F}_p( \! (t) \! )$ on the quasi-isometrically embedded subgroup $\mathbb{F}_p( \! (t) \! ) \rtimes \mathbb{Z}$ is horocylic, so its action on the entire group must be horocyclic as well. 

\begin{prop} \label{prop-exp-growth-focal}
Let $G$ be a unimodular lacunary hyperbolic group. If $H$ is a subgroup of relative exponential growth in $G$, and if $\mathcal{C} = \cone$ is an asymptotic cone of $G$ that is a real tree, then the action of $\pc_G(H,\mathbf{s})$ on $\mathcal{C}$ cannot be focal.
\end{prop}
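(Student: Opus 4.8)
The plan is to argue by contradiction, extending to the subgroup $H$ the measure-theoretic estimate of Lemma \ref{lem-focal-unim}. So suppose the action of $\pc_G(H,\mathbf{s})$ on $\mathcal{C}=\cone$ is focal, with unique fixed boundary point $\eta$. Since relative exponential growth forces $H$ to have positive Haar measure, $H$ is open in $G$, hence carries the restriction of a Haar measure of $G$ and has modular function the restriction of $\Delta_G$, which is trivial; so $H$ is unimodular, and this is the only place the unimodularity of $G$ intervenes. Being focal, $\pc_G(H,\mathbf{s})$ contains a hyperbolic element $h=(h_n)$ with $h_n\in H$ whose axis $A$ has $\eta$ as an endpoint; put $\ell_0=\|h\|>0$. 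For each scale $\ell$ fix $k=k(\ell)$ with $k\ell_0$ close to $\ell$, so that $(h_n^{k})^{\omega}$ and $(h_n^{2k})^{\omega}$ — represented by elements of $H$ — sit at $d_{\omega}$-distance roughly $\ell$ and $2\ell$ from $(e)^{\omega}$ on a ray towards $\eta$. Note that, in contrast with the horocyclic case of Proposition \ref{lem-exp-growth-horo}, where the fixed end is fixed rigidly enough to conjugate balls of $H$ into arbitrarily small ones, the hyperbolic element $h$ only allows conjugation by a fixed contraction factor, and one then iterates.

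The core step is a covering of $B_{G,H}(\ell s_n)$ in the spirit of Lemma \ref{lem-focal-unim}. It suffices to show that any $(g_n)\in\pc_G(H,\mathbf{s})$ moving $(e)^{\omega}$ by at most $\ell$ lies $\omega$-almost surely in the union displayed below, since an $\omega$-large set of $n$ for which some $g_n\in B_{G,H}(\ell s_n)$ escaped the covering would assemble into such a sequence. Such a $(g_n)$ fixes $\eta$, hence is elliptic or parabolic, hyperbolic with $\eta$ attracting, or hyperbolic with $\eta$ repelling. In the first case $(g_n)$ fixes the part of $A$ far out towards $\eta$ and moves $(h_n^{k})^{\omega}$ by a bounded amount; in the second, $(g_n)$ shifts $(h_n^{k})^{\omega}$ towards $\eta$ by its translation length, which is at most $\ell$ up to a bounded error, so $(g_n)\cdot(h_n^{k})^{\omega}$ lands within $\tfrac34\ell$ of $\{(h_n^{k})^{\omega},(h_n^{2k})^{\omega}\}$ once $\ell$ exceeds the fixed constants; the third case reduces to the second via $(g_n^{-1})$. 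Translating back to $G$, one obtains $\omega$-almost surely
\[
B_{G,H}(\ell s_n)\ \subseteq\ h_n^{k}\,B_S(\tfrac34\ell s_n)\,h_n^{-k}\ \cup\ h_n^{2k}\,B_S(\tfrac34\ell s_n)\,h_n^{-k}\ \cup\ h_n^{k}\,B_S(\tfrac34\ell s_n)\,h_n^{-2k}.
\]
Intersecting with $H$ and using that $h_n^{\pm k},h_n^{\pm 2k}\in H$ together with the unimodularity of $H$, each of the three pieces has measure $\mu\big(B_{G,H}(\tfrac34\ell s_n)\big)$, whence $\mu\big(B_{G,H}(\ell s_n)\big)\le 3\,\mu\big(B_{G,H}(\tfrac34\ell s_n)\big)$ $\omega$-almost surely, for all $\ell$ above a fixed threshold $\ell_*$.

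Finally one iterates this along the finite chain of radii $\ell,\tfrac34\ell,(\tfrac34)^2\ell,\dots$ down to the interval $[\tfrac34\ell_*,\ell_*]$ — a finite intersection of sets of full $\omega$-measure, which avoids any issue with the merely finite additivity of $\omega$ — to get, $\omega$-almost surely, $\mu\big(B_{G,H}(\ell s_n)\big)\le C\,\ell^{\,c}\,\mu\big(B_S(\ell_*s_n)\big)\le C\,\ell^{\,c}\,\alpha^{\ell_*s_n}$, where $\alpha$ bounds the exponential growth rate of $G$ and $C,c,\ell_*$ do not depend on $\ell$ or $n$. Choosing $\ell$ with $\ell\log\rho>2\ell_*\log\alpha$, where $\rho>1$ witnesses the relative exponential growth of $H$, this contradicts $\mu\big(B_{G,H}(\ell s_n)\big)\ge\rho^{\ell s_n}$ for $n$ large. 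I expect the main difficulty to be the covering estimate: carrying out the Busemann/median case analysis carefully enough to keep the conjugating elements inside $H$ — which is why the hyperbolic element $h$, rather than an arbitrary translation towards $\eta$, must drive the argument — and tracking the additive constants so that the iteration produces a power of $\ell$ rather than an exponential.
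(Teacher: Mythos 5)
Your proof is correct, but it executes the covering-and-counting step differently from the paper. Both arguments are modifications of Lemma \ref{lem-focal-unim}: the focal dynamics forces every $(g_n)\in\pc_G(H,\mathbf{s})$ of displacement at most $\ell$ to move a reference point on the ray towards the fixed end by a controlled amount, yielding a covering of $B_{G,H}(\ell s_n)$ by few two-sided translates of smaller balls, which unimodularity converts into a growth estimate contradicting relative exponential growth. The paper stays at the single scale $\ell=1$ and instead refines the target: it subdivides $[\xi(1),\xi(2)]$ into $k$ equal pieces, obtaining $\mu\bigl(B_{G,H}(e,s_n)\bigr)\le 2(k+1)\,\mu\bigl(B_G(e,s_n/k)\bigr)\le 2(k+1)\alpha^{s_n/k}$ for every $k$, hence $\rho\le\alpha^{1/k}$ for all $k$ — an immediate contradiction. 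Because the covering is applied only once, the conjugating elements $x_n^{(i)},\xi_n(1)$ may be arbitrary elements of $G$, and only the unimodularity of $G$ is used. Your version keeps the three-piece covering of Lemma \ref{lem-focal-unim} with a fixed contraction factor $3/4$ and iterates it down the scales; this forces the conjugators to lie in $H$ so that the intermediate sets remain translates of $H$-balls, which is exactly why you must drive the argument with a hyperbolic element of $\pc_G(H,\mathbf{s})$ rather than with points $\xi_n(\ell)$ on the ray — a correct and genuinely necessary extra idea for your route, at the cost of tracking the polynomial factor $C\ell^c$ and the additive constants coming from the axis of $h$ not passing through $(e)^\omega$. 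The paper's refinement trick buys a one-step proof with no constraint on the conjugators; your iteration buys nothing extra here, but it is sound, and had you allowed $k+1$ subdivision points in your covering you would have recovered the paper's argument verbatim.
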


\begin{proof}
The argument will be a slight modification of the beginning of the proof of Lemma \ref{lem-focal-unim}. Assume that $\xi: [0, + \infty [ \rightarrow \mathcal{C}$ is a geodesic ray starting at $(e)^{\omega}$ representing and end of $\mathcal{C}$ that is fixed by $\pc_G(H,\mathbf{s})$. Let us fix some $k \geq 1$, and consider $k+1$ points $\xi(1) = x^{(0)},x^{(1)},\ldots,x^{(k)} = \xi(2)$  dividing the interval $[\xi(1),\xi(2)]$ into $k$ segments of equal length. For every $(h_n) \in \pc_G(H,\mathbf{s})$ such that $|h_n|_S \leq s_n$ for every $n \geq 1$, upon changing $(h_n)$ in $(h_n)^{-1}$, there exists some point $x^{(i)}$ such that the distance in $\mathcal{C}$ between $(h_n)\cdot \xi(1)$ and $x^{(i)}$ is at most $1/2k$. This implies that \mbox{$\omega$-almost} surely, the distance in $G$ between $h_n \xi_n(1)$ and $x^{(i)}_n$ is at most $s_n / k$. 

So for every $k \geq 1$, \mbox{$\omega$-almost} surely \[ B_{G,H}(e,s_n) \subset \bigcup_{i=0}^k \left( x_n^{(i)} \cdot B_{G}(e,s_n/k) \cdot \xi_n(1)^{-1} \right)^{\pm 1}, \]
and certainly \[ \begin{aligned} \mu \left(B_{G,H}(e,s_n) \right) & \leq \sum_{i=0}^k 2 \mu \left( x_n^{(i)} \cdot B_{G}(e,s_n/k) \cdot \xi_n(1)^{-1} \right) \\ & = 2 (k+1) \mu \left(B_{G}(e,s_n/k)\right) \\ & \leq 2 (k+1) \alpha^{s_n/k} \end{aligned} \] for some constant $\alpha \geq 1$. Now since $H$ has relative exponential growth in $G$, we obtain that there exists $\rho > 1$ such that for every $k \geq 1$, \mbox{$\omega$-almost} surely $\rho^{s_n} \leq 2 (k+1) \alpha^{s_n/k}$. This implies that $\rho \leq \alpha^{1/k}$ for every $k \geq 1$, which contradicts the fact that $\rho > 1$.
\end{proof}

\begin{prop} \label{prop-law-cyclic}
Let $G$ be a unimodular lacunary hyperbolic group, and $H \leq G$ a compactly generated subgroup of relative exponential growth in $G$. Assume that $H$ does not have a cyclic cocompact subgroup. Then for any asymptotic cone $\mathcal{C} = \cone$ of $G$ that is a real tree, the action of $\pc_G(H,\mathbf{s})$ on $\mathcal{C}$ is of general type.
\end{prop}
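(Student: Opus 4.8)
The plan is to eliminate, one by one, the four types of isometric action on a real tree other than general type, so that Gromov's classification forces the action of $\pc_G(H,\mathbf{s})$ on $\mathcal{C}$ to be of general type. Three of these eliminations are immediate from results already in hand. An action with bounded orbits on the complete real tree $\mathcal{C}$ has a fixed point (a circumcenter of an orbit), so both the bounded and the horocyclic case contradict Proposition~\ref{lem-exp-growth-horo}, which applies precisely because $H$ has relative exponential growth in $G$; and the focal case is excluded directly by Proposition~\ref{prop-exp-growth-focal}. What remains, and where the assumption that $H$ has no cyclic cocompact subgroup enters, is to rule out a lineal action.

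So I would argue by contradiction, assuming the action of $\pc_G(H,\mathbf{s})$ on $\mathcal{C}$ is lineal. Then $\pc_G(H,\mathbf{s})$ preserves a pair of ends of $\mathcal{C}$, hence the line $L\subseteq\mathcal{C}$ joining them, and it contains a hyperbolic element $\gamma$; since in a lineal action all hyperbolic elements share the same pair of endpoints --- necessarily the two ends of $L$ --- the axis of $\gamma$ must be $L$ itself, and $\gamma$ translates $L$ by $\left\|\gamma\right\|>0$. Let $\pi_L$ denote the nearest-point projection of $\mathcal{C}$ onto $L$; it is $1$-Lipschitz, and it is $\pc_G(H,\mathbf{s})$-equivariant since $L$ is invariant. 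Setting $r=d_{\omega}\bigl((e)^{\omega},L\bigr)$, every $g\in\pc_G(H,\mathbf{s})$ is an isometry fixing $L$, so $d_{\omega}\bigl(g\cdot(e)^{\omega},L\bigr)=r$; hence the orbit $\mathcal{C}_H=\pc_G(H,\mathbf{s})\cdot(e)^{\omega}$ lies in the $r$-neighbourhood of $L$, while $\pi_L(\mathcal{C}_H)$ contains the $\langle\gamma\rangle$-orbit of $\pi_L\bigl((e)^{\omega}\bigr)$, which is a $\left\|\gamma\right\|$-net in $L$. Therefore $\mathcal{C}_H$ is at finite Hausdorff distance from $L$, and $\pi_L$ restricts to a quasi-isometry $\mathcal{C}_H\to L\cong\mathbb{R}$.

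Finally, $\mathcal{C}_H$ is naturally identified with the asymptotic cone of $H$ endowed with the metric induced from $G$, and this metric is coarsely connected because $H$ is a closed compactly generated subgroup \cite[Proposition~4.B.8]{Cor-dlH}. Since this asymptotic cone is quasi-isometric to the real line, Lemma~\ref{lem-cone-lineal} yields an infinite cyclic cobounded subgroup of $H$; as closed balls of $H$ for the induced metric are compact, such a subgroup is cocompact, contradicting the hypothesis on $H$. Hence no lineal action occurs and the action of $\pc_G(H,\mathbf{s})$ on $\mathcal{C}$ is of general type. I expect the lineal case to be the only real obstacle: identifying $\mathcal{C}_H$ with an asymptotic cone of $H$ and pinning down its large-scale geometry through the projection onto the invariant line is the step that needs care, whereas the exclusion of the bounded, horocyclic and focal cases is a direct appeal to the preceding propositions.
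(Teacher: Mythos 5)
Your proof is correct and follows essentially the same route as the paper: a case analysis over the possible types of the action, invoking Proposition~\ref{lem-exp-growth-horo} for the fixed-point and horocyclic cases, Proposition~\ref{prop-exp-growth-focal} for the focal case, and Lemma~\ref{lem-cone-lineal} for the lineal case. Your projection-onto-the-invariant-axis argument in the lineal case is a welcome explicit verification of the hypothesis of Lemma~\ref{lem-cone-lineal} (that the cone of $(H,d_G)$, identified with the orbit of $(e)^{\omega}$, is quasi-isometric to a line), a step the paper leaves implicit.
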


\begin{proof}
We carry out a case-by-case analysis of the possible type of the action of $\pc_G(H,\mathbf{s})$ on $\mathcal{C}$, and prove that other types of actions all lead to a contradiction. 

If $\pc_G(H,\mathbf{s})$ fixes a point in $\mathcal{C}$ then Lemma \ref{lem-comp-fixpt} implies that $H$ is compact, which is a contradiction with the fact that $H$ has relative exponential growth. Now assume that the action of $\pc_G(H,\mathbf{s})$ on $\mathcal{C}$ is lineal. Since $H$ is compactly generated, the metric space $(H,d_G)$ is coarsely connected \cite[Proposition 4.B.8]{Cor-dlH}. So we are in position to apply Lemma \ref{lem-cone-lineal} to obtain that $H$ admits an infinite cyclic cocompact subgroup, which is again a contradiction. Finally, it follows from Proposition \ref{lem-exp-growth-horo} that the action of $\pc_G(H,\mathbf{s})$ on $\mathcal{C}$ cannot be horocylic, and according to Proposition \ref{prop-exp-growth-focal} it cannot be focal either.
\end{proof}

We immediately deduce the following result.

\begin{cor} \label{cor-growth-law}
Let $G$ be a unimodular lacunary hyperbolic group. If $H \leq G$ is a compactly generated subgroup of relative exponential growth in $G$ not having $\mathbb{Z}$ as a discrete cocompact subgroup, then $H$ cannot satisfy a law.
\end{cor}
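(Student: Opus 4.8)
The plan is to derive Corollary~\ref{cor-growth-law} directly from Proposition~\ref{prop-law-cyclic} together with the standard fact, already invoked in the proof of Proposition~\ref{prop-lac-law-hyp}, that a group acting on a real tree with two hyperbolic isometries without common endpoint contains a non-abelian free subgroup \cite[Theorem 2.7]{Cul-Mor}. So I would argue by contradiction: assume that $H$ satisfies some non-trivial law $w(x_1,\dots,x_n)$. Since $G$ is lacunary hyperbolic, fix parameters $\mathbf{s},\omega$ for which $\mathcal{C}=\cone$ is a real tree.

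First I would check that the hypotheses of Proposition~\ref{prop-law-cyclic} are satisfied. The only thing requiring an argument is that $H$ has \emph{no} cyclic cocompact subgroup, rather than merely no discrete copy of $\mathbb{Z}$ as a cocompact subgroup. Here I would use that relative exponential growth forces $H$ to have positive Haar measure, hence to be open and in particular non-compact. If $C=\langle g\rangle$ were a cyclic cocompact subgroup of $H$, then its closure $\overline{C}$ would be a cocompact closed subgroup of $H$ containing a dense cyclic subgroup; a locally compact group with a dense cyclic subgroup is either compact or isomorphic to $\mathbb{Z}$. The first case would make $H$ compact, a contradiction; in the second case $\overline{C}\cong\mathbb{Z}$ is discrete, so $C=\overline{C}$ is a discrete cocompact copy of $\mathbb{Z}$ in $H$, again contradicting the hypothesis. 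Hence $H$ has no cyclic cocompact subgroup, and Proposition~\ref{prop-law-cyclic} applies: the action of $\pc_G(H,\mathbf{s})$ on $\mathcal{C}$ is of general type.

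To conclude, I would invoke \cite[Theorem 2.7]{Cul-Mor} to produce a non-abelian free subgroup of $\pc_G(H,\mathbf{s})$. On the other hand, since $H$ satisfies the law $w$, the subgroup $\pc_G(H,\mathbf{s})$ of $H$-valued sequences satisfies the same law (as recorded in Section~\ref{sec-preli}), and a group satisfying a non-trivial law cannot contain a non-abelian free subgroup. This contradiction finishes the proof. The only mildly delicate point is the elementary reduction in the second paragraph from ``cyclic cocompact subgroup'' to ``discrete $\mathbb{Z}$ cocompact subgroup''; everything of substance is contained in Proposition~\ref{prop-law-cyclic} and the Culler--Morgan free subgroup theorem, so there is no genuinely hard step here.
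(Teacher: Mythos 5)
Your proof is correct and is exactly the argument the paper intends: the corollary is derived ``immediately'' from Proposition~\ref{prop-law-cyclic} by combining the Culler--Morgan theorem (a general type action on a real tree yields a non-abelian free subgroup) with the observation from Section~\ref{sec-preli} that $\pc_G(H,\mathbf{s})$ inherits any law satisfied by $H$, and your explicit bridge from ``no discrete cocompact $\mathbb{Z}$'' to ``no cyclic cocompact subgroup'' via Weil's lemma on monothetic groups is a legitimate (and welcome) elaboration of what the paper leaves implicit. One tiny phrasing slip: non-compactness of $H$ follows not from openness but from $\mu(B_{G,H}(e,n))\geq\rho^{n}$ being unbounded, which forces $\mu(H)=\infty$.
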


When specified to finitely generated groups, Corollary \ref{cor-growth-law} answers Question 7.2 in \cite{OOS}. As an example, we deduce the following result.

\begin{cor} \label{cor-solv-lac}
Any finitely generated solvable subgroup of a finitely generated lacunary hyperbolic group is virtually cyclic.
\end{cor}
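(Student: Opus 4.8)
The plan is to deduce this from Corollary~\ref{cor-growth-law} together with the Milnor--Wolf theorem. Let $H$ be a finitely generated solvable subgroup of a finitely generated lacunary hyperbolic group $G$, and suppose for contradiction that $H$ is not virtually cyclic; note that $G$, being discrete, is unimodular. First, $H$ satisfies a law: being finitely generated and solvable it has some finite derived length $d$, hence it satisfies the iterated commutator law $u_d\equiv 1$, where $u_0(x_1)=x_1$ and $u_{k+1}(x_1,\dots,x_{2^{k+1}})=[u_k(x_1,\dots,x_{2^k}),\,u_k(x_{2^k+1},\dots,x_{2^{k+1}})]$.

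By the Milnor--Wolf theorem, $H$ either has exponential growth or is virtually nilpotent. Suppose first that $H$ has exponential growth. Fixing finite generating sets $S_H$ of $H$ and $S_G$ of $G$ and setting $L=\max_{s\in S_H}|s|_{S_G}$, one has $B_H^{S_H}(m)\subseteq B_{G,H}(Lm)$ for every $m\geq 1$, so the relative growth function $n\mapsto \#B_{G,H}(n)$ dominates, up to the affine rescaling $n\mapsto n/L$, the exponential growth function of $H$; hence $H$ has relative exponential growth in $G$. Since $H$ is moreover compactly generated, does not admit $\mathbb{Z}$ as a discrete cocompact subgroup (it is not virtually cyclic) and satisfies a law, this contradicts Corollary~\ref{cor-growth-law}.

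It remains to rule out the case where $H$ is virtually nilpotent and not virtually cyclic. In this case $H$ contains a subgroup isomorphic to $\mathbb{Z}^2$: a finite-index nilpotent subgroup $N$ of $H$ is finitely generated and not virtually cyclic, hence contains a finite-index torsion-free nilpotent subgroup of Hirsch length at least $2$, and such a group contains $\mathbb{Z}^2$. Choose $\mathbf{s}$ and $\omega$ so that $\mathcal{C}=\cone$ is a real tree, and let $\mathcal{C}_{\mathbb{Z}^2}$ be the orbit of $(e)^\omega$ under $\pc_G(\mathbb{Z}^2,\mathbf{s})$. Since $\mathbb{Z}^2$ is finitely generated, $(\mathbb{Z}^2,d_G)$ is coarsely connected, so $\mathcal{C}_{\mathbb{Z}^2}$ is isometric to an asymptotic cone of this space, hence geodesic, and being a geodesic subspace of the real tree $\mathcal{C}$ it is a subtree; the group $\pc_G(\mathbb{Z}^2,\mathbf{s})$ acts on it transitively, in particular minimally. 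As $\pc_G(\mathbb{Z}^2,\mathbf{s})$ is abelian, its action on $\mathcal{C}_{\mathbb{Z}^2}$ cannot be of general type (it would contain a non-abelian free subgroup by \cite[Theorem~2.7]{Cul-Mor}) and cannot be focal (two commuting hyperbolic isometries of a real tree share their axis, so the existence of a hyperbolic element would make the action lineal). Being minimal, the action is therefore bounded or lineal. If it is bounded, Lemma~\ref{lem-comp-fixpt} forces $\mathbb{Z}^2$ to be compact, i.e.\ finite, a contradiction. If it is lineal, then applying Lemma~\ref{lem-cone-lineal} to $(\mathbb{Z}^2,d_G)$ yields an element $v$ of infinite order whose cyclic subgroup $\langle v\rangle$ is cobounded for the left-invariant metric $d_G$; since $d_G$ is proper on the discrete group $\mathbb{Z}^2$, this gives $\mathbb{Z}^2=\langle v\rangle\cdot F$ for a finite set $F$, so $\langle v\rangle$ has finite index in $\mathbb{Z}^2$, again a contradiction. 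This completes the argument.

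The delicate point is the virtually nilpotent case: Corollary~\ref{cor-growth-law} only applies when the relative growth is exponential, which may fail for $\mathbb{Z}^2$, so one must instead analyse directly the action of the abelian group $\pc_G(\mathbb{Z}^2,\mathbf{s})$ on the real tree, the only remaining subtlety being to check that the cobounded cyclic subgroup produced by Lemma~\ref{lem-cone-lineal} has finite index, which uses the properness of the induced metric on the discrete subgroup $\mathbb{Z}^2$.
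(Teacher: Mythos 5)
Your first half (the exponential-growth case, via relative exponential growth and Corollary \ref{cor-growth-law}) is correct and coincides with the paper's argument. The gap lies in your treatment of the virtually nilpotent case, at the step where you declare $\mathcal{C}_{\mathbb{Z}^2}$ to be a geodesic subspace, hence a subtree, of $\mathcal{C}$. Coarse connectedness of $(\mathbb{Z}^2,d_G)$ does \emph{not} imply that its asymptotic cone is geodesic: coarse connectedness provides $c$-chains with no control on their length relative to the distance, and for a distorted subgroup the cone of the induced metric can be totally disconnected. Concretely, for the exponentially distorted copy of $\mathbb{Z}=\langle a\rangle$ in $BS(1,2)$ one has $d_G(a^k,a^l)\asymp\log(1+|k-l|)$, which is coarsely connected with constant $|a|_G$, yet since $\log(1+|k-m|)\leq \log 2+\max\bigl(\log(1+|k-l|),\log(1+|l-m|)\bigr)$ every asymptotic cone of this space is an ultrametric space with no midpoints. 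This is exactly why Proposition \ref{prop-normal-compact-cocompact} assumes the subgroup is \emph{quasi-isometrically embedded} before calling its orbit a subtree; subgroups of lacunary hyperbolic groups can be arbitrarily distorted, so you cannot assume this for your $\mathbb{Z}^2$.

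The casualty is the horocyclic case. Without the subtree/minimality argument, classifying the action of the abelian group $\pc_G(\mathbb{Z}^2,\mathbf{s})$ on the whole tree $\mathcal{C}$ leaves three possibilities --- bounded, lineal, or horocyclic --- and you only dispose of the first two (those two parts of your argument can be repaired: in the lineal case the orbit lies at constant distance from the invariant line and projects quasi-isometrically onto a cobounded subset of it, so Lemma \ref{lem-cone-lineal} still applies). Horocyclic actions of abelian subgroups genuinely occur: the paper's own example of $\mathbb{F}_p( \! (t) \! )$ inside an amalgam acts horocyclically, and abelianness cannot exclude this type, since commuting elliptic isometries of a tree have intersecting fixed-point sets, so every finitely generated subgroup of $\pc_G(\mathbb{Z}^2,\mathbf{s})$ would be elliptic while the full uncountable group need not be. The paper sidesteps all of this by a different route: a finitely generated virtually nilpotent group is finitely presented, hence by \cite[Theorem 3.18 (a)]{OOS} embeds in a genuinely hyperbolic group, whose virtually nilpotent subgroups are virtually cyclic. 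To salvage your direct approach you would need an independent argument excluding the horocyclic case; the growth argument of Proposition \ref{lem-exp-growth-horo} only applies when the relative growth is exponential, which may fail for $\mathbb{Z}^2$.
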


\begin{proof}
Let $H$ be a finitely generated solvable group that is a subgroup of a finitely generated lacunary hyperbolic group $G$. Assume that $H$ has exponential growth. Then $H$ has relative exponential growth in $G$, and according to Corollary \ref{cor-growth-law} the group $H$ must be virtually cyclic, contradiction. Therefore the solvable group $H$ does not have exponential growth, and we deduce that $H$ must be virtually nilpotent \cite{Milnor, Wolf}. In particular $H$ is finitely presented and therefore must be a subgroup of a hyperbolic group \cite[Theorem 3.18 (a)]{OOS}, and the conclusion follows from the fact that any finitely generated virtually nilpotent subgroup of a hyperbolic group is virtually cyclic.
\end{proof}

\nocite{*}
\bibliographystyle{amsalpha}
\bibliography{lacunary}

\end{document}